\title[Hamiltonain vector fields]{Hamiltonian vector fields of homogeneous polynomials in two variables}
\author{Sergiy Maksymenko}
\newtheorem{thm}[subsection]{Theorem}
\newtheorem{lem}[subsection]{Lemma}
\newtheorem{cor}[subsection]{Corollary}
\newtheorem{prop}[subsection]{Proposition}
\newtheorem{rem}[subsection]{Remark}
\newtheorem{exmp}[subsection]{Example}
\newtheorem{defn}[subsection]{Definition}
\providecommand\eqref[1]{(\ref{#1})}
\newcommand\CCC{{\mathbb C}}
\newcommand\NNN{{\mathbb N}}
\newcommand\RRR{{\mathbb R}}
\newcommand\ZZZ{{\mathbb Z}}
\newcommand\XX{{\mathcal X}}
\newcommand\YY{{\mathcal Y}}
\newcommand\id{\mathrm{id}}
\newcommand\IM{\mathrm{Im}}
\newcommand\RE{\mathrm{Re}}
\newcommand\supp{\mathrm{supp\,}}
\newcommand\Diff{\mathcal{D}}
\newcommand\End{{\mathcal{E}}}
\newcommand\EndId{\End_{\id}}
\newcommand\DiffB{\Diff(\BFld)}
\newcommand\manif{M}
\newcommand\AFlow{\mathbf{F}}
\newcommand\AFld{F}
\newcommand\BFlow{\mathbf{G}}
\newcommand\BFld{G}
\newcommand\HFld{H}
\newcommand\Dom[1]{\mathcal{U}_{#1}}
\newcommand\BDom{\Dom{\BFlow}}
\newcommand\secfunc{\psi}
\newcommand\lShift{\sigma}
\newcommand\iShift{\secfunc}
\newcommand\ShiftI{\Psi}
\newcommand\szShift{\Lambda}  
\newcommand\BFunc{g}
\newcommand\Amap{A}
\newcommand\Bmap{B}
\newcommand\Pmap{P}
\newcommand\Qmap{Q}
\newcommand\Rmap{R}
\newcommand\difM{h}
\newcommand\qdifM{q}
\newcommand\afunc{\alpha}
\newcommand\bfunc{\beta}
\newcommand\dg{p}
\newcommand\nbh{V}
\newcommand\anbh{U}
\newcommand\Nbh{\mathcal{U}}
\newcommand\EndFlowNbh[2]{\End(#1,#2)}
\newcommand\EndFlow[1]{\End(#1)}
\newcommand\EndIdFlowNbh[2]{\EndId(#1,#2)}
\newcommand\EndIdFlow[1]{\EndId(#1)}
\newcommand\EndBV{\EndFlowNbh{\BFld}{\nbh}}
\newcommand\EndAU{\EndFlowNbh{\AFld}{\anbh}}
\newcommand\EndBVi[1]{\End_{#1}(\BFld,\nbh,\orig)}
\newcommand\EndAUDHi[1]{\End_{#1}(\AFld,\anbh,\dHsp)}
\newcommand\EndZAUDHi[1]{\End_{#1}(\AFld,\anbh,\dHsp)_{\ZZZ}}
\newcommand\EndB{\EndFlow{\BFld}}
\newcommand\EndIdBV{\EndIdFlowNbh{\BFld}{\nbh}}
\newcommand\EndIdB{\EndIdFlow{\BFld}}
\newcommand\gEnd{\hat{\End}}
\newcommand\gEndBz{\gEnd(\BFld,\orig)}
\newcommand\gEndIdBz{\hat{\End}_{\id}(\BFld,\orig)}
\newcommand\AST{{\rm($\ast$)}}
\newcommand{\Flat}{\mathrm{Flat}\,} 
\newcommand{\Func}{\mathrm{Func}\,} 
\newcommand{\FlatMap}{\mathrm{Map}\,} 
\newcommand\Hsp{\mathbb{H}}
\newcommand\rrho{\rho}
\newcommand\AZHR{\Flat_{\ZZZ}(\Hsp,\partial\Hsp)}
\newcommand\EZHH{\FlatMap^{\infty}_{\ZZZ}(\Hsp,\partial\Hsp)}
\newcommand\ARtwo{\Flat(\RRR^2,\orig)}
\newcommand\ERtwo{\FlatMap^{\infty}(\RRR^2,\orig)}
\newcommand\IHsp{\overset{\circ}{\Hsp}}
\newcommand\dHsp{\partial\Hsp}
\newcommand\EZHdH{\FlatMap^{0}_{\ZZZ}(\Hsp,\dHsp)}
\newcommand\ERtwoz{\FlatMap^{0}(\RRR^2,\orig)}
\newcommand\hdifM{\hat\difM}
\newcommand\EndBViz[1]{X_{\Qmap}} 
\newcommand\fbij{\mathbf{f}}
\newcommand\mbij{\mathbf{m}}
\newcommand\orig{O}
\newcommand\Sh{Sh}
\begin{document}
\begin{abstract}
Let $g:\mathbb{R}^{2}\to\mathbb{R}$ be a homogeneous polynomial of degree $\dg\geq2$, 
$G=(-g'_{y}, g'_{x})$ be its Hamiltonian vector field, and $\mathbf{G}_{t}$ be the local flow generated by $G$.
Denote by ${\mathcal{E}}(G,O)$ the space of germs of $C^{\infty}$ diffeomorphisms $(\mathbb{R}^{2},O)\to (\mathbb{R}^{2},O),$ that preserve orbits of $\BFld$.
Let also $\hat{\mathcal{E}}_{\mathrm{id}}(G,O)$ be the identity component of $\hat{\mathcal{E}}(G,O)$ with respect to $C^1$ topology.

Suppose that $g$ has no multiple prime factors.
Then we prove that for every $h\in\hat{\mathcal{E}}_{\mathrm{id}}(G,O)$ there exists a germ of a smooth function $\alpha:\mathbb{R}^{2}\to\RRR$ at $O$ such that 
$$ 
h(z)=\mathbf{G}_{\alpha(z)}(z).
$$ 
\end{abstract}
\maketitle
	
\section{Introduction}
Let $\dg\geq1$ and $\BFunc:\RRR^2\to\RRR$ be a homogeneous polynomial of degree $\dg+1$, i.e.\! $\deg\BFunc\geq2$.
Then we have a prime decomposition of $\BFunc$ over $\RRR$:
\begin{equation}\label{equ:g-decomp}
\BFunc(x,y) =  \prod_{i=1}^{l} L_i(x,y) \cdot \prod_{j=1}^{\dg+1-l} Q_j(x,y),
\end{equation}
where every $L_i = a_i x + b_i y$ is a linear function, and every $Q_j$ is a definite quadratic form.

\begin{lem}\label{lm:AST-for-polynom}{\rm\cite{Maks:TaylorSer}}
The following conditions for a homogeneous polynomial $\BFunc$ of degree $\deg\BFunc\geq2$ are equivalent:
\begin{enumerate}
 \item
decomposition~\eqref{equ:g-decomp} contains no multiple factors
 \item
none of the partial derivatives $\BFunc'_x$ and $\BFunc'_{y}$ is identically zero (i.e. $\BFunc$ does depend on $x$ and $y$) and these polynomials are relatively simple in the ring $\RRR[x,y]$.
\end{enumerate}
In this case the origin $\orig\in\RRR^2$ is a unique critical point for $\BFunc$.
\end{lem}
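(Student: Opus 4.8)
The plan is to prove the equivalence $(1)\Leftrightarrow(2)$ by setting up a precise dictionary between the multiple factors of $\BFunc$ and the common factors of the pair $\BFunc'_x,\BFunc'_y$, and then to read off uniqueness of the critical point from the resulting coprimality. First I would pass to the algebraic closure. As $\BFunc$ is homogeneous in two variables, over $\mathbb{C}$ it splits completely, $\BFunc=c\prod_k\ell_k^{m_k}$ with pairwise non-proportional linear forms $\ell_k$. The real decomposition~\eqref{equ:g-decomp} is compatible with this: each real linear factor $L_i$ stays a complex linear factor, while each definite quadratic $Q_j$ factors as $Q_j=\ell\bar\ell$ with $\ell\neq\bar\ell$, since definiteness forces non-real (hence distinct conjugate) roots. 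Therefore $\BFunc$ is square-free over $\mathbb{R}$ exactly when it is square-free over $\mathbb{C}$, i.e.\ when all $m_k=1$. I would likewise note that coprimality of $\BFunc'_x$ and $\BFunc'_y$ does not depend on the ground field, because the $\gcd$ of polynomials is unchanged under field extension; so it suffices to test coprimality over $\mathbb{C}$.

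The core step is the dictionary: for a linear form $\ell$ over $\mathbb{C}$, one has $\ell\mid \BFunc'_x$ and $\ell\mid \BFunc'_y$ if and only if $\ell^2\mid \BFunc$. The forward direction is immediate from the product rule: if $\ell$ occurs in $\BFunc$ with multiplicity $m\geq2$, writing $\BFunc=\ell^m \GFunc$ one factors $\ell^{m-1}$ out of both $\BFunc'_x$ and $\BFunc'_y$. For the converse I would invoke Euler's identity $x\,\BFunc'_x+y\,\BFunc'_y=(\deg \BFunc)\,\BFunc$: if $\ell$ divides both partials then it divides $\BFunc$, so $\BFunc=\ell^m \GFunc$ with $\ell\nmid \GFunc$ and $m\geq1$; were $m=1$, the product rule together with the fact that at least one of the constant coefficients $\ell'_x,\ell'_y$ is nonzero would force $\ell\mid \GFunc$, a contradiction. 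Granting the dictionary, square-freeness over $\mathbb{C}$ is equivalent to $\gcd(\BFunc'_x,\BFunc'_y)=1$ over $\mathbb{C}$.

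The equivalence then follows. For $(1)\Rightarrow(2)$: if $\BFunc$ depended on only one variable it would equal $c\,x^{\deg \BFunc}$ or $c\,y^{\deg \BFunc}$ and hence, as $\deg \BFunc\geq2$, carry a multiple factor; so both partials are nonzero, and the dictionary converts square-freeness into coprimality. For $(2)\Rightarrow(1)$ I argue by contraposition: a multiple factor is either a pure power of $x$ or of $y$, which annihilates a partial derivative, or else yields over $\mathbb{C}$ a repeated linear form dividing both partials, so that $\BFunc'_x,\BFunc'_y$ fail to be coprime over $\mathbb{C}$ and hence over $\mathbb{R}$. The only point needing real care --- and the main obstacle, such as it is --- is this bookkeeping of the degenerate cases where a partial vanishes identically, together with the clean transfer of square-freeness and coprimality across the extension $\mathbb{R}\subset\mathbb{C}$; the algebraic heart, the converse of the dictionary, is short once Euler's identity is used.

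Finally, for uniqueness of the critical point: the partials $\BFunc'_x,\BFunc'_y$ are homogeneous of degree $\deg \BFunc-1\geq1$, so they vanish at $\orig$, making $\orig$ critical. If some $z_0=(x_0,y_0)\neq \orig$ were also critical, then the nonzero real linear form $L=y_0 x-x_0 y$ vanishes at $z_0$ and, by homogeneity, divides every homogeneous polynomial vanishing at $z_0$; hence $L\mid \BFunc'_x$ and $L\mid \BFunc'_y$, contradicting the coprimality from $(2)$. Thus $\orig$ is the unique critical point.
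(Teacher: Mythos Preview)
The paper does not supply its own proof of this lemma; it is quoted from~\cite{Maks:TaylorSer} and stated without argument. So there is no in-paper proof to compare against, and your proposal must be judged on its own.

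Your argument is correct and is the standard one. Passing to $\mathbb{C}$, establishing the equivalence ``$\ell^2\mid\BFunc \Leftrightarrow \ell\mid\BFunc'_x$ and $\ell\mid\BFunc'_y$'' via the product rule and Euler's identity, and then transferring square-freeness and coprimality back and forth along $\RRR\subset\CCC$ is exactly the right skeleton. Two small points of presentation you might tighten: in the contrapositive for $(2)\Rightarrow(1)$ your phrase ``a multiple factor is either a pure power of $x$ or of $y$, which annihilates a partial derivative, or else\ldots'' reads as a false dichotomy about the \emph{factor}; what you mean is that a repeated complex linear factor always divides both partials, and if in addition both partials are nonzero this already contradicts coprimality, while if one partial vanishes identically then (2) fails for that reason. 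Second, the assertion that $\gcd$ is preserved under the extension $\RRR\subset\CCC$ deserves one line of justification in the homogeneous two-variable setting (a non-real common linear factor $\ell$ of the real polynomials $\BFunc'_x,\BFunc'_y$ forces $\bar\ell$ to be a common factor too, hence $\ell\bar\ell$ is a real common factor). The uniqueness-of-critical-point paragraph is fine: the fact that a nonzero homogeneous form in two variables vanishing at $(x_0,y_0)\neq\orig$ is divisible by $y_0x-x_0y$ is immediate from the linear/quadratic factorization over $\RRR$.
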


\begin{defn}[Property \AST\ for a polynomial]
Say that a homogeneous polynomial $\BFunc \in \RRR[x,y]$ of degree $\deg\BFunc\geq2$ has property \AST\ if it satisfies one of the conditions of Lemma~\ref{lm:AST-for-polynom}.
\end{defn}

\begin{exmp}
For $n\geq2$ consider the following function 
$$
\omega_n:\CCC\to\CCC, \qquad 
\omega_n(z)=z^n.
$$
Then its real and imagine parts $\RE(z^n)$ and $\IM(z^n)$ have property \AST.
\end{exmp}

Let $\HFld=(-\BFunc'_{y},\BFunc'_{x})$ be the Hamiltonian vector field for $\BFunc$.
Then $\BFunc$ is constant along orbits of $\HFld$.
The typical foliations of $\RRR^2$ by level sets of homogeneous polynomials are shown in Figures~\ref{fig:non-def} and~\ref{fig:def}.

Notice that the property \AST\ for $\BFunc$ can be formulated as follows: 
\emph{the Hamiltonian vector field $\HFld$ of $\BFunc$ can not be represented as a product $\HFld = \omega \HFld_1$, where $\omega$ is a homogeneous polynomial of degree $\deg\omega\geq1$ and $\HFld_1$ is a homogeneous vector field.}

\begin{defn}[Property \AST\ for a vector field]
Say that a vector field $\BFld$ on $\RRR^2$ {\bfseries has property \AST\ at $\orig$} if there exist
a smooth ($C^{\infty}$) and everywhere non-zero function $\eta:\RRR^2\to\RRR\setminus\{0\}$, local coordinates $(x,y)$ at $\orig$, and a homogeneous polynomial $\BFunc(x,y)$ having property \AST\ such that 
$$\BFld=\eta\HFld,$$
where $\HFld=(-\BFunc_y,\BFunc_x)$ is a Hamiltonian vector field of $\BFunc$.
\end{defn}

It follows from Lemma~\ref{lm:AST-for-polynom} that in this case the origin $\orig\in\RRR^2$ is an isolated singular point of $\BFld$.

\subsection{Main result.}
Let $\BFld$ be a smooth vector field defined in a neighborhood of the origin $\orig\in\RRR^2$.
Denote by $\gEndBz$ the set of germs of $C^{\infty}$ diffeomorphisms 
$$\difM:(\RRR^2,\orig)\to (\RRR^2,\orig)$$
preserving orbits of $\BFld$, i.e.\! $\difM\in\gEndBz$ if there exists a neighborhood $\nbh$ of $\orig$ such that 
\begin{equation}\label{equ:h_o__o}
\difM(\omega\cap\nbh)\subset\omega
\end{equation}
for each orbit $\omega$ of $\BFld$.

Let also $\gEndIdBz$ be the \emph{identity component\/} of $\gEndBz$ with respect to $C^1$-topology.
It consists of germs of diffeomorphisms at $\orig$ isotopic to $\id_{\RRR^2}$ in $\gEndBz$ via isotopy  whose partial derivatives of the first order continuously depend on the parameter, see~\cite{Maks:TaylorSer} for details.

Denote by $\BFlow:\; \RRR^2\times\RRR \supset \BDom\;\longrightarrow\;\RRR^2$ the corresponding local flow of $\BFld$ defined on an open neighborhood $\BDom$ of $\RRR^2\times\{0\}$ in $\RRR^2\times\RRR$.

Then for every germ of a smooth function $\afunc:\RRR^2\to\RRR$ at $\orig$ we can define the following map $\difM:\RRR^2\to\RRR^2$ by 
\begin{equation}\label{equ:sm-shift-def}
\difM(z)=\BFlow(z,\afunc(z)).
\end{equation}
This map will be called the \emph{smooth shift} along orbits of $\BFld$ via the function $\afunc$.
Denote by $\Sh(\BFld,\orig)$ the set of germs of mappings of the form~\eqref{equ:sm-shift-def}, where $\afunc$ runs over all germs of smooth function at $\orig$.

Then, see~\cite{Maks:TA:2003}, $\Sh(\BFld,\orig) \subset\gEndIdBz$.

In this paper we prove the following theorem:
\begin{thm}\label{th:shift-func-for-inf-close}
Let $\BFld$ be a vector field on $\RRR^2$ having property \AST\ at $\orig$.
Then $\Sh(\BFld,\orig) =\gEndIdBz$.
Thus every $\difM\in\gEndIdBz$ can be represented in the form~\eqref{equ:sm-shift-def} for some smooth function $\afunc:\RRR^2\to\RRR$.
\end{thm}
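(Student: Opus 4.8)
The plan is to reduce the statement to a purely algebraic problem about the so-called \emph{shift map}. Given $\difM\in\gEndIdBz$, since $\difM$ preserves orbits of $\BFld$, for each $z$ near $\orig$ lying on a regular (one-dimensional) orbit $\omega$ the point $\difM(z)$ lies on the same orbit, so there is a well-defined local ``time'' $\afunc(z)\in\RRR$ (modulo the period, if the orbit is periodic) with $\difM(z)=\BFlow(z,\afunc(z))$. The whole content of the theorem is that one can choose $\afunc$ to be the germ of a single smooth function on all of $(\RRR^2,\orig)$, including across the singular point $\orig$ and across the separatrices where regular orbits degenerate. First I would record the structure of $\BFld$ furnished by property \AST: writing $\BFld=\eta\HFld$ with $\HFld=(-\BFunc_y,\BFunc_x)$ and $\BFunc$ a homogeneous polynomial of degree $\dg+1$ with no multiple factors, Lemma~\ref{lm:AST-for-polynom} tells us $\orig$ is the unique critical point of $\BFunc$, hence the unique singular point of $\BFld$; the level curves of $\BFunc$ near $\orig$ look like Figures~\ref{fig:non-def}–\ref{fig:def}, i.e.\ finitely many ``sectors'' bounded by the lines $L_i=0$, with the orbits being either unbounded branches of level curves (when $\BFunc$ is non-definite) or closed curves surrounding $\orig$ (when $\BFunc$ is definite).

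The main step is \emph{constructing} the shift function $\afunc$ and proving it is smooth. Off the singular point and off the separatrix lines $\{L_i=0\}$, existence and local smoothness of $\afunc$ is the standard implicit-function-theorem argument for flows along a non-vanishing vector field: choose a local smooth transversal, express $\difM$ in flow-box coordinates, read off the time. The real work is at three kinds of bad loci: (i) the separatrices $\{L_i=0\}$, where neighbouring regular orbits come arbitrarily close together so the naive $\afunc$ could blow up; (ii) for the definite case, the periodicity ambiguity $\afunc \mapsto \afunc + (\text{period})$ which must be resolved consistently so that the chosen branch extends smoothly; and (iii) the point $\orig$ itself, where $\BFlow$ has a singular orbit. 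Here I would invoke the hypothesis $\difM\in\gEndIdBz$: being in the \emph{identity component} (in $C^1$-topology) means $\difM$ is $C^1$-isotopic to $\id$ through orbit-preserving germs, which pins down the branch of $\afunc$ — near $\id$ one must take the branch with $\afunc\approx 0$, and isotopy invariance propagates this choice, killing ambiguity (ii) and forcing, on each separatrix, the limiting value of $\afunc$ from the two adjacent sectors to agree. This is exactly where the ``no multiple prime factors'' hypothesis is essential: a multiple factor $L_i^k$ would produce a non-isolated singularity / a flow that vanishes to higher order along $L_i=0$, breaking the matching.

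For the smoothness across the separatrices and across $\orig$ I would use the homogeneity of $\BFunc$ together with a division/flatness argument in the spirit of \cite{Maks:TaylorSer}: after the change of coordinates provided by property \AST\ one reduces to $\BFld=\HFld$ with $\BFunc$ homogeneous, uses the local flow's explicit quasi-homogeneous scaling $\BFlow(\lambda x,\lambda y, t) \sim \lambda\,\BFlow(x,y,\lambda^{\dg-1}t)$ to control $\afunc$ near $\orig$, and handles the separatrices by showing that the difference of the candidate function from the two sides is flat (all derivatives vanish) along $\{L_i=0\}$, hence extends smoothly; the non-vanishing factor $\eta$ only rescales time and does not affect smoothness. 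Finally one checks the resulting germ $\afunc$ indeed satisfies $\difM(z)=\BFlow(z,\afunc(z))$ identically, giving $\difM\in\Sh(\BFld,\orig)$; the reverse inclusion $\Sh(\BFld,\orig)\subset\gEndIdBz$ is already quoted from \cite{Maks:TA:2003}. The hard part, as indicated, is item (i)–(iii): proving that the locally-defined time functions glue to one globally smooth germ across the singular point and the separatrices, and it is precisely the combination of property \AST\ (isolated singularity, no multiple factors) and membership in the $C^1$-identity component that makes this gluing possible.
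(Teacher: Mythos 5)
There is a genuine gap, and it concerns the two ingredients that carry the entire weight of the paper's argument. First, you misidentify how the hypothesis $\difM\in\gEndIdBz$ is used. It does not serve to ``pin down a branch'' of the time function: in the non-definite case the orbits inside each sector are non-closed, so the time $\afunc(z)$ with $\difM(z)=\BFlow(z,\afunc(z))$ is already single-valued wherever it exists, and there is nothing to match from the two sides of a separatrix. The obstruction is analytic, not topological: for a general element of the identity component the naive time function need not stay bounded, let alone smooth, as $z$ approaches a separatrix or $\orig$. The paper removes this obstruction by quoting Proposition~\ref{pr:almost-shift-func} from \cite{Maks:TaylorSer}: for $\difM$ in the identity component one can construct a smooth $\szShift(\difM)$ such that $\hdifM(z)=\BFlow(\difM(z),-\szShift(\difM)(z))$ is \emph{$\infty$-close to the identity} at $\orig$; this is where the identity-component hypothesis (and the degree-$\dg$ Taylor analysis) is consumed. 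The theorem then reduces to Proposition~\ref{pr:inf-id-shift-func}, which treats only $\infty$-close-to-identity germs and produces a \emph{flat} shift function. Your outline has no analogue of this normalization step, and without it the ``division/flatness'' argument you gesture at has nothing to divide: flatness of the time function is simply false before the correction by $\szShift$.

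Second, the smoothness proof itself is not a scaling argument in $\RRR^2$ but a polar blow-up. The paper passes to $\Pmap(\phi,\rrho)=(\rrho\cos\phi,\rrho\sin\phi):\Hsp\to\RRR^2$, proves (Theorems~\ref{th:corresp-flat-func} and~\ref{th:corresp-flat-maps}) that functions and maps flat at $\orig$ correspond exactly to $\ZZZ$-invariant ones flat on $\dHsp$ (with tame continuity estimates), lifts $\BFld$ to $\AFld$ on $\Hsp$, and then writes the shift function explicitly as $\int_{\phi}^{\hdifM_1(\phi,\rrho)}d\theta/(\rrho^{\dg-1}\gamma(\theta))$ or the analogous $\rrho$-integral. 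Property \AST\ enters precisely through Corollary~\ref{cor:formulas-for-F}: the exponent $a$ of the factor $y$ in $\BFunc$ is $0$ or $1$, so one of the lifted components equals $\rrho^{\dg-1}\gamma_1(\phi)$ or $\rrho^{\dg}\gamma_2(\phi)$ with $\gamma_i$ non-vanishing, which is what makes the integral formula available and reduces flatness to Hadamard-lemma divisions by powers of $\rrho$. Your proposal replaces all of this with ``quasi-homogeneous scaling'' and a claim that the difference of the candidate function across a separatrix is flat; neither is substantiated, and the separatrix-matching picture is the wrong model of where the difficulty lies. In short, the skeleton you describe is plausible, but the two load-bearing steps --- the reduction to $\infty$-close-to-identity germs and the polar-coordinate computation of the flat shift function --- are absent, and these are the proof.
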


\begin{rem}\rm
Suppose that $\orig$ is a regular point for $\BFld$, i.e. $\BFld(\orig)\not=0$.
Then every smooth map preserving orbits of $\BFld$ is a neighborhood of $\orig$ is a shift along orbits of $\BFld$ via a certain \emph{smooth\/} function $\afunc$.
For the convenience of the reader we recall a proof of this fact, see~\cite[Eq.~(10)]{Maks:TA:2003}.
Indeed, since $\BFld(\orig)\not=0$, it follows that there are local coordinates $(x_1,\ldots,x_n)$ at $\orig$ such that $\BFld(x)=(1,0,\ldots,0)$, whence 
$$\BFlow(x_1,\ldots,x_n,t) = (x_1+t,x_2,\ldots,x_n).$$
If now $\difM=(\difM_1,\ldots,\difM_n):\RRR^n\to\RRR^n$ is a smooth map that preserves orbits of $\BFld$, then $\difM_i=x_i$ for $2\leq i \leq n$.
Set 
\begin{equation}\label{equ:reg-shifts}
\afunc(x) = \difM_1(x)-x_1.
\end{equation}
Then $\difM(x) = \BFlow(x,\afunc(x))$.
\end{rem}

\subsection{Applications.}
In~\cite{Maks:TA:2003} the identity $$\Sh(\BFld,\orig) = \gEndIdBz$$ is established for all linear  vector fields on $\RRR^n$.
Thus if $\BFld(x)=A\cdot x$ is a linear vector field on $\RRR^n$, where $A$ is a non-zero $(n\times n)$-matrix, then every $\difM\in\gEndIdBz$ can be represented as follows
$$\difM(x) = e^{\afunc(x) A} \cdot x$$
for a certain smooth function $\afunc:\RRR^n\to\RRR$.
It allowed for a vector field $\BFld$ satisfying mild conditions describe the homotopy types of the connected components of the group $\DiffB$ of orbit preserving diffeomorphisms.
This result was essentially used in~\cite{Maks:AGAG:2006} for the calculation of the homotopy types of stabilizers and orbits of Morse functions on compact surfaces $M$ with respect to the action of $\Diff(M)$.

Theorem~\ref{th:shift-func-for-inf-close} allowed to perform similar calculation for large class of functions on surfaces with isolated singularities.
This will be done in another paper.

\subsection{Structure of the paper.}
In Section~\ref{sect:cont-maps} the definition of weak Whitney topologies is given.

Section~\ref{sect:proof-th:shift-func-for-inf-close} includes a plan of the proof of Theorem~\ref{th:shift-func-for-inf-close}.
Using results of~\cite{Maks:TaylorSer} the proof is reduced to the case when $\difM\in\gEndIdBz$ is $\infty$-close to he identity at  $\orig$, see Proposition~\ref{pr:inf-id-shift-func}.
It turns out that in order to work with these mappings it is convenient to use polar coordinates $(\phi,\rrho)$, see Section~\ref{sect:polar-coordinates}.
In this case instead of a unique singular point $\orig=(0,0)\in\RRR^2$ we obtain a whole line of singular points $\rrho=0$, but the formulas for the vector field $\BFld$ in polar coordinates becomes essentially simple.

Then in Section~\ref{sect:corresp-flat-func} it is shown that instead of smooth functions on $\RRR^2$ that are flat at $\orig$, we can consider smooth functions with respect to polar coordinates $(\phi,\rrho)$ being flat for $\rrho=0$.
Similarly, in Section~\ref{sect:corresp-flat-maps} it is proved that instead of diffeomorphisms of $\RRR^2$ that are $\infty$-close to the identity at $\orig$ it is possible to consider diffeomorphisms of the half-plane of polar coordinates $\Hsp$ that are $\infty$-close to the identity for $\rrho=0$.

In Section~\ref{sect:proof_of_shift2} a proof of Proposition~\ref{pr:inf-id-shift-func} is given.
This will complete Theorem~\ref{th:shift-func-for-inf-close}.

\section{Continuous maps between functional spaces}\label{sect:cont-maps}
Let $\nbh\subset\RRR^n$ be an open subset and $f=(f_1,\ldots,f_m):\nbh\to\RRR^m$ be a smooth mapping.
For every compact $K\subset \nbh$ and integer $r\geq 0$ define the \emph{$r$-norm} of $f$ on $K$ by
$$
\|f\|^{r}_{K} \;= \;\sum_{j=1}^{m} \;\sum_{|i|\leq r}\; \sup\limits_{x\in K} |D^{i}f_j(x)|,
$$
where $i=(i_1,\ldots,i_n)$, $|i|=i_1+\cdots+i_n$, and
$D^i = \frac{\partial^{|i|}}{\partial x_1^{i_1}\cdots\partial x_{n}^{i_n}}$.
For a fixed $r$ the norms $\|f\|^{r}_{K}$ define the so-called \emph{weak} $C^{r}_{W}$ Whitney topology on $C^{\infty}(\nbh,\RRR^m)$, see~\cite{GolubitskyGuillemin,Hirsch:DiffTop}.

\begin{defn}
Let $A,B,C,D$ be smooth manifolds, 
$$\XX\subset C^{\infty}(A,B), \qquad \YY\subset C^{\infty}(C,D)$$
be two subsets and $F:\XX\to\YY$ be a map.
Say that $F$ is {\bfseries $C^{s,r}_{W,W}$-continuous} provided it is continuous from $C^{s}_{W}$-topology on $\XX$ to $C^{r}_{W}$-topology on $\YY$.

Say that $F$ is {\bfseries tamely continuous} if for every $r\geq 0$ there exists an integer number $s(r)\geq0$ such that $F$ is $C^{s(r),r}_{W,W}$-continuous.
Evidently, every tamely continuous map is $C^{\infty,\infty}_{W,W}$-continuous.
\end{defn}

The following lemmas are easy to prove, see~\cite{Maks:TaylorSer}.
\begin{lem}\label{lm:D-cont}
Let $D:C^{\infty}(\nbh)\to C^{\infty}(\nbh)$ be the mapping defined by
$$
D(\afunc) = \frac{\partial^{|k|}\afunc}{\partial x^k},
$$
where $k=(k_1,\ldots,k_n)$, $|k|=\sum\limits_{i=1}^{n} k_i$, and $\partial x^k=\partial x_1^{k_1}\cdots\partial x_n^{k_n}$.
Then $D$ is $C^{r+|k|,r}_{W,W}$-continuous for all $r\geq0$.
\end{lem}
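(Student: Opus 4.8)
The plan is to exploit that $D$ is an $\RRR$-linear operator and that, for a fixed order, the weak Whitney topologies are generated by seminorms. Indeed, the $C^{r}_{W}$ topology on the target is generated by the seminorms $\|\cdot\|^{r}_{K}$ and the $C^{r+|k|}_{W}$ topology on the source by the seminorms $\|\cdot\|^{r+|k|}_{K}$, where $K$ ranges over the compact subsets of $\nbh$. For a linear map between locally convex spaces, continuity is equivalent to the requirement that each seminorm of the target be dominated by some seminorm of the source. Hence it suffices to fix a compact set $K\subset\nbh$ and an integer $r\geq 0$ and to bound $\|D(\afunc)\|^{r}_{K}$ by $\|\afunc\|^{r+|k|}_{K}$, uniformly in $\afunc\in C^{\infty}(\nbh)$.

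The heart of the argument is a purely computational observation. Since $\afunc$ is smooth, its mixed partial derivatives commute, so for every multi-index $i=(i_1,\ldots,i_n)$ one has
$$
D^{i}\bigl(D(\afunc)\bigr) = \frac{\partial^{|i|}}{\partial x^{i}}\,\frac{\partial^{|k|}\afunc}{\partial x^{k}} = \frac{\partial^{|i|+|k|}\afunc}{\partial x^{i+k}} = D^{i+k}(\afunc),
$$
where $i+k=(i_1+k_1,\ldots,i_n+k_n)$ denotes the componentwise sum. Substituting this into the definition of the $r$-norm of $D(\afunc)$ yields
$$
\|D(\afunc)\|^{r}_{K} = \sum_{|i|\leq r}\ \sup_{x\in K}\bigl|D^{i+k}(\afunc)(x)\bigr|.
$$

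Next I would carry out the multi-index bookkeeping. As $i$ runs over all multi-indices with $|i|\leq r$, the shifted indices $i+k$ are pairwise distinct and satisfy $|i+k|=|i|+|k|\leq r+|k|$; thus they form a subfamily of the index set $\{\,j : |j|\leq r+|k|\,\}$ over which $\|\afunc\|^{r+|k|}_{K}$ is summed. As all summands are nonnegative, dropping the extra terms only increases the right-hand side, giving
$$
\|D(\afunc)\|^{r}_{K} \leq \sum_{|j|\leq r+|k|}\ \sup_{x\in K}\bigl|D^{j}(\afunc)(x)\bigr| = \|\afunc\|^{r+|k|}_{K}
$$
on the \emph{same} compact $K$ and with constant $1$. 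By the reduction of the first paragraph this shows that $D$ is $C^{r+|k|,r}_{W,W}$-continuous, and since $r\geq 0$ is arbitrary the lemma follows.

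I do not expect a genuine obstacle here: the only point requiring care is the bookkeeping that identifies $D^{i}\circ D = D^{i+k}$ and injects the distinct shifted indices $i+k$ into the larger index set, so that the estimate holds on the same compact set with no loss of constant and without enlarging $K$. Linearity of $D$ then automatically upgrades this single seminorm bound into continuity between the weak Whitney topologies.
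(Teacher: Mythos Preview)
Your argument is correct: the linearity of $D$ together with the identity $D^{i}(D\afunc)=D^{i+k}\afunc$ and the injection $i\mapsto i+k$ into $\{j:|j|\le r+|k|\}$ gives $\|D\afunc\|^{r}_{K}\le\|\afunc\|^{r+|k|}_{K}$, which is exactly the required continuity. The paper does not supply its own proof of this lemma (it merely states that it is easy and cites \cite{Maks:TaylorSer}), so there is nothing to compare against; your write-up is an appropriate verification of the omitted detail.
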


\begin{lem}\label{lm:Z-cont}
Let $Z:C^{\infty}(\nbh)\to C^{\infty}(\nbh)$ be the mapping defined by
$$
Z(\afunc)(x_1,\ldots,x_n) = x_1 \cdot \afunc(x_1,\ldots,x_n), \qquad \afunc\in C^{\infty}(\nbh).
$$
Then $Z$ is injective and for every $r\geq0$ the mapping $Z$ is $C^{r,r}_{W,W}$-continuous and its inverse $Z^{-1}$ is a $C^{r+1,r}_{W,W}$-continuous.
\end{lem}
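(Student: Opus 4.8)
The plan is to exploit that $Z$ is \emph{linear}, so that each of the three assertions reduces to an estimate between the seminorms defining the weak Whitney topologies. First, \emph{injectivity}: if $Z(\afunc)=Z(\bfunc)$, then $x_1(\afunc-\bfunc)\equiv0$ on $\nbh$, and since $x_1\neq0$ on the dense open set $\{x_1\neq0\}\cap\nbh$ we get $\afunc=\bfunc$ there, hence everywhere by continuity. Next, \emph{continuity of $Z$}: by the Leibniz rule, for a multi-index $i=(i_1,\ldots,i_n)$ one has
$$
D^{i}\bigl(x_1\afunc\bigr) = x_1\,D^{i}\afunc + i_1\,D^{i'}\afunc,
\qquad i'=(i_1-1,i_2,\ldots,i_n),
$$
the last term appearing only when $i_1\geq1$. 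On a compact $K$ set $M=\sup_{K}|x_1|$; since $i_1\leq|i|\leq r$ and $|i'|\leq r-1$, summation over $|i|\leq r$ yields $\|Z(\afunc)\|^{r}_{K}\leq C\,\|\afunc\|^{r}_{K}$ with $C=M+rN_r$, where $N_r$ is the number of multi-indices of length $\leq r$. Because $Z$ is linear, this bound for every $K$ is precisely its $C^{r,r}_{W,W}$-continuity.

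For \emph{$Z^{-1}$}, note that the image of $Z$ is the space of smooth functions vanishing on $\{x_1=0\}$, on which $Z^{-1}(f)=f/x_1$. The key is to represent this quotient by Hadamard's formula: writing $x'=(x_2,\ldots,x_n)$ and integrating $\tfrac{d}{dt}f(tx_1,x')$, the hypothesis $f(0,x')=0$ gives
$$
Z^{-1}(f)(x_1,x') = \int_{0}^{1} f'_{x_1}(tx_1,x')\,dt.
$$
Differentiating under the integral sign, for $|i|\leq r$,
$$
D^{i}\bigl(Z^{-1}(f)\bigr)(x) = \int_{0}^{1} t^{\,i_1}\,\bigl(D^{i}f'_{x_1}\bigr)(tx_1,x')\,dt,
$$
so $|D^{i}(Z^{-1}(f))(x)|\leq\sup_{\widehat K}|D^{i}f'_{x_1}|$, where $\widehat K$ is the union of the segments joining each $(x_1,x')\in K$ to $(0,x')$. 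Since $D^{i}f'_{x_1}$ is a partial derivative of $f$ of order $|i|+1\leq r+1$, summing over $|i|\leq r$ gives $\|Z^{-1}(f)\|^{r}_{K}\leq\|f\|^{r+1}_{\widehat K}$. This is exactly the asserted $C^{r+1,r}_{W,W}$-continuity, and the loss of one derivative is intrinsic to the formula, which is what forces the index $r+1$ rather than $r$.

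The one genuinely delicate point, which I expect to be the main obstacle, is that the integral representation is legitimate only when $\widehat K\subset\nbh$, i.e. every segment $[(0,x'),(x_1,x')]$ with $(x_1,x')\in K$ lies in $\nbh$. This holds automatically when $\nbh$ is star-shaped with respect to the hyperplane $\{x_1=0\}$ (a ball centred on it, a half-space, or $\RRR^n$), which already covers the situations in which the lemma is applied. For a general open $\nbh$ I would argue locally: each $p\in K$ possesses a ball $B\subset\nbh$ which, when $p_1=0$, may be taken centred on $\{x_1=0\}$ and is therefore itself star-shaped with respect to that hyperplane, so the integral estimate applies on $B$; when $p_1\neq0$ one instead differentiates the quotient $f/x_1$ directly on a neighbourhood where $|x_1|$ is bounded below, which even gives a $C^{r,r}$ bound there. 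Covering $K$ by finitely many such balls, letting $K'$ be the (relatively compact) union of the corresponding segment-hulls and balls, and taking the largest of the finitely many local constants then produces the required compact $K'\subset\nbh$ and uniform constant. The derivative-counting above is routine; verifying that the local estimates patch with a uniform constant is the only step requiring care.
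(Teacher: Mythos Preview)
The paper does not actually prove this lemma: it is introduced with ``the following lemmas are easy to prove, see~\cite{Maks:TaylorSer}'' and no argument is given. Your proof is correct and is precisely the natural one---Leibniz for $Z$, Hadamard's integral representation for $Z^{-1}$---and indeed the Hadamard lemma is stated in the paper immediately after this lemma, which confirms it is the intended device. Your discussion of the star-shapedness hypothesis is more scrupulous than the applications in the paper require (there $\nbh$ is always a ball about the origin in $\RRR^2$ or a neighbourhood of $\dHsp$ in the half-plane $\Hsp$, and in both cases the segments $[(0,x'),(x_1,x')]$ trivially stay in $\nbh$), but the local patching you outline for general $\nbh$ is sound.
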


\begin{lem}[Hadamard]
Let $f:\RRR\to\RRR$ be a smooth function such that $f(0)=0$, then 
$f(x)=  s\underbrace{ \int_{0}^{1} f'(tx)dt}_{g(x)} = x\, g(x)$, where $g$ is smooth and $g(0)=f'(0)$.\qed
\end{lem}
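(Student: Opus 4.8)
The plan is to express $f$ via the fundamental theorem of calculus, rescale the integration variable to bring the factor of $x$ out front, and then verify that the resulting averaged integral defines a smooth function by differentiating under the integral sign.

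First I would use the hypothesis $f(0)=0$ together with the fundamental theorem of calculus to write
$$
f(x) = f(x) - f(0) = \int_0^x f'(u)\,du.
$$
Substituting $u = tx$, so that $du = x\,dt$ and $t$ ranges over $[0,1]$, this becomes
$$
f(x) = \int_0^1 f'(tx)\,x\,dt = x\int_0^1 f'(tx)\,dt,
$$
so that, setting $g(x) := \int_0^1 f'(tx)\,dt$, we immediately obtain the desired factorization $f(x) = x\,g(x)$.

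The main point to verify is that $g$ is smooth. Here I would observe that the integrand $(t,x)\mapsto f'(tx)$ is a smooth function on $[0,1]\times\RRR$, being the composition of the smooth map $f'$ with the smooth product $(t,x)\mapsto tx$. Since the domain of integration $[0,1]$ is compact, one may differentiate under the integral sign arbitrarily many times: for each $k\geq 0$,
$$
g^{(k)}(x) = \int_0^1 \frac{\partial^k}{\partial x^k}\, f'(tx)\,dt = \int_0^1 t^k\, f^{(k+1)}(tx)\,dt,
$$
and each such integral is continuous in $x$, since $f^{(k+1)}$ is continuous and the integration runs over the compact interval $[0,1]$, which supplies the uniform bounds needed to pass limits through the integral. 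Hence $g\in C^{\infty}(\RRR)$.

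Finally, evaluating at $x=0$ gives $g(0) = \int_0^1 f'(0)\,dt = f'(0)$, which completes the statement. The only step demanding genuine care is the interchange of differentiation and integration used to establish smoothness of $g$; this is the sole obstacle to a purely formal derivation, and it rests squarely on the compactness of $[0,1]$ together with the joint smoothness of the integrand $f'(tx)$.
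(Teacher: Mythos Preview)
Your proof is correct and is the standard argument for Hadamard's lemma. The paper does not give a separate proof: the integral formula $f(x)=x\int_0^1 f'(tx)\,dt$ is simply stated in the lemma with a \qed, so there is nothing further to compare against.
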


More generally,
\begin{equation}\label{equ:Hadamard-fxy}
f(x+y) = f(x) + y \underbrace{ \int_{0}^{1} f'(x+sy)ds }_{g(x,y)} = f(x) + y \cdot g(x,y),
\end{equation}
where $g$ is also smooth and such that $g(x,0)=f'(x)$.

In particular, if $f$ has an inverse function $f^{-1}$ then
\begin{equation}\label{equ:Hadamard-ffinvxy}
f(f^{-1}(x)+y) =
f(f^{-1}(x)) + y \cdot g(f^{-1}(x),y) =
x + y \cdot g(f^{-1}(x),y).
\end{equation}

\section{Proof of Theorem~\ref{th:shift-func-for-inf-close}}
\label{sect:proof-th:shift-func-for-inf-close}
Actually we establish a more general statement. 
First we introduce some notation.

\subsection{Smooth shifts along orbits of vector fields.}\label{sect:smooth-shifts}
Let $\BFld$ be a vector field on a manifold $\manif$.
We will always denote by $$\BFlow:\manif\times\RRR\supset\BDom \to M$$ the local flow of $\BFld$, where $\BDom$ is an open neighborhood of $\manif\times0$ in $\manif\times\RRR$.

For every open subset $\nbh \subset\manif$ let
$$\EndBV \; \subset\; C^{\infty}(\nbh,\manif)$$
be the set of all smooth mappings $\difM:\nbh\to\manif$ such that
\begin{enumerate}
\item[(1)]
$\difM(\omega\cap\nbh)\subset\omega$ for every orbit $\omega$ of $\BFld$, in particular $\difM$ is fixed on the set of singular points of $\BFld$ contained in $\nbh$;
\item[(2)]
$\difM$ is a local diffeomorphism at every singular point of $\BFld$.
\end{enumerate}

Let also $\EndIdBV$ be the subset of $\EndBV$ consisting of mappings $\difM$ such that
\begin{enumerate}
\item[(3)]
$\difM$ is homotopic to $\id_{\manif}$ in $\EndBV$.
\end{enumerate}

If $\nbh=\manif$, then $\End(\BFld,\manif)$ and $\End_{\id}(\BFld,\manif)$ will be denoted by $\EndB$ and $\EndIdB$ respectively.

Let $\orig\in\nbh$ be a singular point of $\BFld$.
Then $\difM(\orig)=\orig$ for every $\difM\in\EndBV$.
Denote by $\EndBVi{\infty}$ the subset of $\EndBV$ consisting of mappings $\difM$ which are \emph{$\infty$-close to the identity at $\orig$}, i.e.\! the $\infty$-jets of $\difM$ and $\id_{\nbh}$ at $\orig$ coincide.

\begin{thm}\label{th:global-shift-func-1}
Let $\BFld$ be a vector field on $\RRR^2$ having property \AST\ at $\orig$ and $\nbh$ be a sufficiently small open neighborhood of $\orig$.
Then for every $f\in\EndIdBV$ there exists a neighborhood $\Nbh_{f}$ in $\EndIdB$ with respect to $C^{\dg}_{W}$-topology and a tamely continuous map
$$
\lShift_{\nbh}\;:\;\EndIdBV \supset \Nbh_{f} \;\longrightarrow\; C^{\infty}(\nbh)
$$
such that 
$$
\difM(z) = \BFlow(z, \lShift_{\nbh}(\difM)(z))
$$
for every $\difM\in\Nbh_{f}$.

Moreover, if $\deg\BFunc\geq3$, then $\lShift$ can be defined on all of $\EndIdBV$.
\end{thm}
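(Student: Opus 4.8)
\textbf{Plan of the proof of Theorem~\ref{th:global-shift-func-1}.}

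The plan is to reduce the statement, via polar coordinates, to an analysis on the closed half-plane $\Hsp=\{(\phi,\rrho):\rrho\geq0\}$, where $\BFld$ has an essentially simpler form, and then to construct the shift function $\lShift_{\nbh}(\difM)$ by an explicit formula obtained from the flow, checking at each stage that the construction is tamely continuous in the weak Whitney topologies. First I would pass to polar coordinates $(\phi,\rrho)$; by the reductions announced for Sections~\ref{sect:polar-coordinates}--\ref{sect:corresp-flat-maps} (based on \cite{Maks:TaylorSer}), a germ $\difM\in\gEndIdBz$ corresponds to a diffeomorphism of $\Hsp$ that is $\infty$-close to the identity along $\dHsp=\{\rrho=0\}$, and after factoring off the non-vanishing function $\eta$ in the definition of property \AST\ one may assume $\BFld=\HFld$ is the genuine Hamiltonian field of a homogeneous polynomial $\BFunc$ with property \AST. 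In these coordinates the orbits of $\HFld$ off $\dHsp$ are regular, so the ``Remark'' in the excerpt already gives, near any single regular orbit, a \emph{smooth} shift function $\afunc(x)=\difM_1(x)-x_1$ (in flow-box coordinates); the real content is to glue these local shift functions across the whole neighborhood, including across $\dHsp$, into one smooth function whose definition depends tamely continuously on $\difM$.

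The key steps, in order, are: (i) fix a ``transversal'' hypersurface family — concretely use the level sets of $\BFunc$, or equivalently a coordinate $\phi$ transverse to the flow away from $\dHsp$ — and write, for $z$ in a regular orbit $\omega$, $\lShift(\difM)(z)=$ (the unique small $t$ with $\BFlow(z,t)=\difM(z)$); uniqueness and smoothness of $t$ in $z$ on the regular locus follow because $\difM(z)$ lies on the same orbit and the orbit is parametrised by the flow without period (the origin being the only singular point and orbits being noncompact, by Lemma~\ref{lm:AST-for-polynom}); (ii) show this locally-defined function extends smoothly across $\dHsp$, i.e. across $\rrho=0$ — here one divides by an appropriate power of $\rrho$ (or of $\BFunc$), invokes Hadamard's lemma \eqref{equ:Hadamard-fxy}, \eqref{equ:Hadamard-ffinvxy} and Lemma~\ref{lm:Z-cont}, and uses that $\difM$ is $\infty$-close to the identity there so that the difference $\difM(z)-z$ and hence the candidate shift is flat along $\rrho=0$; (iii) verify tame continuity: each operation used — composition with the flow $\BFlow$ (a fixed smooth map, hence tamely continuous by the chain rule / Lemma~\ref{lm:D-cont}), inverting $Z$ (Lemma~\ref{lm:Z-cont}), differentiation (Lemma~\ref{lm:D-cont}), and solving $\BFlow(z,t)=\difM(z)$ for $t$ by the implicit function theorem with estimates — raises the required order of derivatives by a controlled amount, so for each $r$ there is $s(r)$ making $\lShift_{\nbh}$ be $C^{s(r),r}_{W,W}$-continuous on a $C^{\dg}_{W}$-neighbourhood $\Nbh_f$ of the given $f$; (iv) finally, translate back from $\Hsp$ to $\RRR^2$ using Section~\ref{sect:corresp-flat-func}, so that the resulting $\afunc$ is a genuine smooth function on a neighbourhood of $\orig$ in $\RRR^2$.

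The main obstacle I expect is step (ii) together with the ``moreover'' clause. Away from $\dHsp$ everything is the classical regular-point statement, but the shift function built orbit-by-orbit a priori blows up as $\rrho\to0$, and one must show it not only stays bounded but extends \emph{smoothly} (indeed flatly) across the singular line; this is exactly where the hypothesis that $\BFunc$ has no multiple prime factors is essential (it guarantees, via Lemma~\ref{lm:AST-for-polynom}, that $\BFunc'_x,\BFunc'_y$ are coprime, so one can solve for the shift with only a bounded loss of powers of $\rrho$), and the degree restriction $\deg\BFunc\geq3$ governs whether this loss can be absorbed globally on $\EndIdBV$ or only on a neighbourhood $\Nbh_f$ — handling the borderline degree-$2$ case is the delicate point, and I would treat it by localising first (getting $\lShift$ on $\Nbh_f$) and then, when $\deg\BFunc\geq3$, showing the extra smoothness/vanishing lets one patch the local shift functions on all of $\EndIdBV$ without the neighbourhood restriction. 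The tame-continuity bookkeeping in step (iii) is routine once the analytic extension in (ii) is in hand.
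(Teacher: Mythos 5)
Your plan correctly describes the second half of the paper's argument — the explicit construction, in polar coordinates, of a flat shift function for a diffeomorphism that is $\infty$-close to the identity at $\orig$ (this is Proposition~\ref{pr:inf-id-shift-func}, proved in Section~\ref{sect:proof_of_shift2} along essentially the lines you sketch). But there is a genuine gap at the very first step: you assert that an arbitrary $\difM\in\EndIdBV$ ``corresponds to a diffeomorphism of $\Hsp$ that is $\infty$-close to the identity along $\dHsp$''. That is false in general. An orbit-preserving diffeomorphism isotopic to the identity need not have identity $\infty$-jet at $\orig$ (for $\BFunc=x^2+y^2$ every rotation lies in $\EndIdBV$), and the correspondence of Theorem~\ref{th:corresp-flat-maps} is established only for maps in $\ERtwo$, i.e.\ those already $\infty$-close to $\id$ at $\orig$; for a general $\difM$ the lift through the blow-up $\Pmap$ need not even be smooth on $\dHsp$. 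Consequently your step (ii) also breaks down: $\difM(z)-z$ is not flat at $\orig$, so the candidate shift function cannot be made flat (or even smooth) across $\rrho=0$ by the Hadamard-lemma argument.

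The missing idea is the preliminary reduction of Proposition~\ref{pr:almost-shift-func}, quoted from \cite{Maks:TaylorSer}: for $\difM$ in a $C^{\dg}_{W}$-neighbourhood $\Nbh_{f}$ one first constructs, tamely continuously, a function $\szShift(\difM)$ such that $\hdifM(z)=\BFlow(\difM(z),-\szShift(\difM)(z))$ \emph{is} $\infty$-close to $\id_{\RRR^2}$ at $\orig$; only to this corrected $\hdifM$ does the polar-coordinate machinery apply, and the theorem then follows from the additivity of shifts, $\lShift=\szShift+\ShiftI\circ H$. Note also that the neighbourhood $\Nbh_{f}$ and the ``moreover, $\deg\BFunc\geq3$'' clause originate in this jet-correction step, not — as you suggest — in the analysis of flatness across $\rrho=0$: Proposition~\ref{pr:inf-id-shift-func} holds on all of $\EndBVi{\infty}$, with no degree restriction and no localisation.
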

The proof is based on the following two statements.
The first one is established in~\cite{Maks:TaylorSer}:
\begin{prop}\label{pr:almost-shift-func}{\rm\cite{Maks:TaylorSer}}
Let $\BFld$ be a vector field on $\RRR^2$ having property \AST\ at $\orig$ and $\anbh\subset\nbh$ be two sufficiently small open neighborhoods of $\orig$.
Then for every $f\in\EndIdBV$ there exists a neighborhood $\Nbh_{f}$ in $\EndIdBV$ with respect to $C^{\dg}_{W}$-topology and a tamely continuous map
$$
\szShift:\Nbh_{f} \to C^{\infty}(\nbh)
$$
such that for every $\difM\in\Nbh_{f}$ we have that $$\supp\szShift(\difM) \subset \anbh$$ and the mapping $\hdifM:\nbh\to\RRR$ defined by
$$
\hdifM(z) = \BFlow(\difM(z), -\szShift(\difM)(z))
$$
is $\infty$-close to $\id_{\RRR^2}$ at $\orig$.
In particular, $\hdifM\in\EndBVi{\infty}$.

Moreover, if $\deg\BFunc\geq3$, then $\szShift$ can be defined on all of $\EndIdB$.
\end{prop}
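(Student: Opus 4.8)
\emph{Plan.} The plan is to reduce Proposition~\ref{pr:almost-shift-func} to a purely formal (jet level) statement and then to realize the formal solution by a smooth, cut-off function. The content of the proposition is that an orbit-preserving germ $\difM$ lying in the identity component agrees, to infinite order at $\orig$, with a shift along $\BFld$; the map $\szShift$ is nothing but a record of the corresponding shift function, truncated away from $\orig$, and the asserted flatness of $\hdifM(z)=\BFlow(\difM(z),-\szShift(\difM)(z))$ is precisely the statement that shifting $\difM$ back by $-\szShift(\difM)$ annihilates its entire Taylor expansion at $\orig$. So the whole argument splits into an algebraic core (existence of a formal shift), a realization step, and a continuity estimate.

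First I would analyze the $\infty$-jet of $\difM$ at $\orig$. Write $\BFld=\eta\HFld$ with $\HFld=(-\BFunc_y,\BFunc_x)$ the Hamiltonian field of a homogeneous polynomial $\BFunc$ of degree $\dg+1$ having property \AST. Since $\BFunc$ has no multiple prime factors, $\orig$ is the unique critical point of $\BFunc$ by Lemma~\ref{lm:AST-for-polynom}, and $\HFld$ is not of the form $\omega\HFld_1$ for a nonconstant homogeneous $\omega$. The heart of the argument is the algebraic claim that, under \AST\ and in the identity component, every formal orbit-preserving germ at $\orig$ is a formal flow shift $z\mapsto\BFlow(z,\hat\alpha(z))$ for some formal power series $\hat\alpha$. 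Concretely I would compare the Taylor coefficients of $\difM$ and of $\BFlow(\cdot,\hat\alpha)$ order by order and solve for the coefficients of $\hat\alpha$; the non-multiplicity hypothesis is exactly what makes this triangular system solvable, because it forces the only formal vector fields tangent to the level foliation of $\BFunc$ to be functional multiples of $\BFld$ (away from $\orig$ one has $\grad\BFunc\neq0$, and relative primeness of $\BFunc_x,\BFunc_y$ promotes this to a formal statement at $\orig$). This produces a formal series $\hat\alpha$ with $j^{\infty}_{\orig}\difM=j^{\infty}_{\orig}[\BFlow(\cdot,\hat\alpha)]$.

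Next I would realize $\hat\alpha$ smoothly. By Borel's theorem there is a smooth $\tilde\alpha$ near $\orig$ with $j^{\infty}_{\orig}\tilde\alpha=\hat\alpha$; multiplying by a fixed bump function supported in $\anbh$ yields $\szShift(\difM)$ with $\supp\szShift(\difM)\subset\anbh$ and still $j^{\infty}_{\orig}\szShift(\difM)=\hat\alpha$. The flow group law $\BFlow(\BFlow(z,a),b)=\BFlow(z,a+b)$ then gives, at the level of $\infty$-jets at $\orig$, $\hdifM(z)=\BFlow(\difM(z),-\szShift(\difM)(z))\equiv\BFlow(\BFlow(z,\hat\alpha(z)),-\hat\alpha(z))=\BFlow(z,0)=z$, so that $\hdifM$ is $\infty$-close to $\id_{\RRR^2}$ at $\orig$ and hence $\hdifM\in\EndBVi{\infty}$.

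Finally, tame continuity. The assignment $\difM\mapsto\hat\alpha$ factors through taking a finite jet $j^{r}_{\orig}\difM$ (continuous in the $C^{r}_{W}$-topology) followed by the solution of a fixed polynomial system in its coefficients, so each finite jet of $\szShift(\difM)$ depends continuously on a finite jet of $\difM$; together with Lemmas~\ref{lm:D-cont} and~\ref{lm:Z-cont} and the fixed cutoff this produces, for each target order $r$, a source order $s(r)$, which is tameness. The case distinction surfaces here: when $\deg\BFunc\geq3$ the field vanishes to order $\geq2$ at $\orig$, which rigidifies the leading jet of any orbit-preserving $\difM$ and normalizes $\hat\alpha$ unambiguously, so the construction extends to all of $\EndIdB$; when $\deg\BFunc=2$ (the linear case $\dg=1$) the leading behavior of $\difM$ may vary and the normalization of $\hat\alpha$ is only available on a $C^{\dg}_{W}$-neighborhood $\Nbh_f$ of a given $f$. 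I expect the main obstacle to be exactly the formal algebraic step of the second paragraph: proving that \AST\ forces every formal orbit-preserving jet in the identity component to be a flow shift, which is where the relative primeness of $\BFunc_x,\BFunc_y$ is indispensable and where the dichotomy between $\deg\BFunc=2$ and $\deg\BFunc\geq3$ originates.
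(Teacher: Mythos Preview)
The paper does not prove this proposition at all: it is quoted verbatim from \cite{Maks:TaylorSer} and used as a black box in the deduction of Theorem~\ref{th:global-shift-func-1}. So there is no ``paper's own proof'' to compare against; the present paper only supplies the companion Proposition~\ref{pr:inf-id-shift-func}.

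That said, your outline is the natural strategy and is almost certainly the one carried out in \cite{Maks:TaylorSer}: show that, under \AST, the $\infty$-jet at $\orig$ of any $\difM\in\EndIdBV$ equals the $\infty$-jet of a shift $\BFlow(\cdot,\hat\alpha)$, then realize $\hat\alpha$ by Borel and cut off. Two points deserve more care than you give them. First, the ``algebraic core'' you defer to the end is the entire content of the proposition; saying the recursion is ``triangular'' because $\BFunc'_x,\BFunc'_y$ are coprime is the right intuition, but one must actually set up the degree-by-degree comparison of $j^{\infty}_{\orig}\difM$ with $j^{\infty}_{\orig}\BFlow(\cdot,\hat\alpha)$, identify the linearized operator at each step as multiplication by (a variant of) $\HFld$, and use coprimeness to invert it---and it is precisely here that the identity-component hypothesis and the $\deg\BFunc=2$ versus $\deg\BFunc\geq3$ dichotomy enter, not merely as a normalization issue. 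Second, for tame continuity you need more than ``each finite jet of $\szShift(\difM)$ depends on a finite jet of $\difM$'': you need a tame (linear, continuous) Borel section $\RRR[[x,y]]\to C^{\infty}(\nbh)$, which exists but should be invoked explicitly, since an arbitrary Borel realization followed by a cutoff is not automatically $C^{s(r),r}_{W,W}$-continuous in $\difM$.
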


The second statement will be proved in Section~\ref{sect:proof_of_shift2}.
\begin{prop}\label{pr:inf-id-shift-func}
Let $\BFld$ be a vector field on $\RRR^2$ having property \AST\ at $\orig$ and $\nbh$ be a sufficiently small open neighborhood of $\orig$.
Then there exists a unique map
$$
\ShiftI:\EndBVi{\infty} \to \Flat(\RRR^2,\orig)
$$
such that for every $\hdifM\in\EndBVi{\infty}$ we have that
\begin{equation}\label{equ:shfunc-for-hh}
\hdifM(z) = \BFlow(z,\ShiftI(\hdifM)(z))
\end{equation}
This mapping is $C^{3r+\dg,r}_{W,W}$-continuous for every $r\geq0$.
\end{prop}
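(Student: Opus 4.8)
The plan is to pass to the polar–coordinate half–plane, solve the shift equation on the complement of the singular line by a flow–box argument, and then control the solution as one approaches that line; essentially all the work is in that last step. First I would use the correspondences of Sections~\ref{sect:polar-coordinates}, \ref{sect:corresp-flat-func} and~\ref{sect:corresp-flat-maps} to transport the problem to $\Hsp$: a map $\hdifM\in\EndBVi{\infty}$ becomes an orbit preserving diffeomorphism of a neighbourhood of $\dHsp=\{\rrho=0\}$ in $\Hsp$ (which we continue to denote $\hdifM$) that is $\infty$-close to $\id$ along $\dHsp$, in particular $\hdifM|_{\dHsp}=\id$; a function flat at $\orig$ becomes a function flat along $\dHsp$; and $\BFld$ becomes, by the computation of Section~\ref{sect:polar-coordinates}, a field of the form $\tilde{\BFld}=\rrho^{\dg-1}\tilde{W}$, where $\tilde{W}$ is smooth, tangent to $\dHsp$, has on $\dHsp$ only the finitely many zeros $\phi_{1},\dots,\phi_{l}$ corresponding to the linear factors of $\BFunc$, near each $\phi_{i}$ its radial component vanishes to order exactly one in $\rrho$ with non-zero coefficient, and, by Lemma~\ref{lm:AST-for-polynom}, $\tilde{\BFld}$ has no singular point on $\{\rrho>0\}$. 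It then suffices to produce a unique function $\afunc$, smooth near $\dHsp$ and flat along $\dHsp$, with $\hdifM(w)=\tilde{\BFlow}(w,\afunc(w))$, and to estimate how $\afunc$ depends on $\hdifM$.

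On $\{\rrho>0\}$ the field $\tilde{\BFld}$ is regular, so the flow-box construction recalled in the Introduction (formula~\eqref{equ:reg-shifts}) gives, locally near every point of $\{\rrho>0\}$, a \emph{unique} smooth $\afunc$ with $\hdifM=\tilde{\BFlow}(\cdot,\afunc)$. These local solutions patch to a single global one on $\{\rrho>0\}$ because every orbit of $\tilde{\BFld}$, seen in $\Hsp$, is simply connected: a closed orbit of $\BFld$ encircling $\orig$ unwinds in $\Hsp$ to a properly embedded line $\rrho=\rrho(\phi)$, $\phi\in\RRR$, and all remaining orbits are arcs, so there is no monodromy obstruction. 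By the same token this global $\afunc$ is the \emph{only} smooth shift function on $\{\rrho>0\}$; extending it by $0$ across $\dHsp$ (legitimate since $\hdifM|_{\dHsp}=\id$) and using that any candidate value of $\ShiftI(\hdifM)$ is continuous, the uniqueness of $\ShiftI$ follows once flatness is established.

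The main obstacle is to show that $\afunc$ is flat along $\dHsp$ together with the derivative bounds. Write $\hdifM=\id+\mathbf{h}$ with $\mathbf{h}=(\mathbf{h}_{\phi},\mathbf{h}_{\rrho})$ flat along $\dHsp$. Expanding the flow, $\mathbf{h}(w)=\afunc(w)\,\rrho^{\dg-1}\tilde{W}(w)+O(\afunc(w)^{2}\rrho^{\dg-1})$. At a point whose angle is away from every $\phi_{i}$ the $\phi$-component of $\tilde{W}$ is bounded away from $0$, so solving for $\afunc$ from the $\phi$-equation (via the Hadamard-type identities~\eqref{equ:Hadamard-fxy}--\eqref{equ:Hadamard-ffinvxy} to absorb the quadratic term) yields $\afunc=\rrho^{-(\dg-1)}\cdot(\text{flat})$; near a direction $\phi_{i}$ the radial component of $\tilde{W}$ vanishes to order exactly one in $\rrho$ with non-zero coefficient, so the $\rrho$-equation gives $\afunc=\rrho^{-\dg}\cdot(\text{flat})$. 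In either case $\afunc$ is a flat function divided by a finite power of $\rrho$, hence flat. To make this rigorous one differentiates $\hdifM=\tilde{\BFlow}(\cdot,\afunc)$ and inducts on the order of the derivative, so that every partial derivative of $\afunc$ is seen to vanish on $\dHsp$; keeping these estimates uniform and mutually compatible across the finitely many degenerate directions $\phi_{i}$ is the delicate point.

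Finally, for the continuity statement one checks that each of the constructions above is tame. Passing to flow-box coordinates amounts to solving an ODE, hence depends smoothly — and so tamely — on $\hdifM$; composition with a fixed diffeomorphism and extraction of a component is $C^{r,r}_{W,W}$-continuous; the Hadamard-type remainders of~\eqref{equ:Hadamard-fxy}--\eqref{equ:Hadamard-ffinvxy} cost finitely many derivatives by Lemma~\ref{lm:D-cont}; and dividing a flat function by $\rrho^{\dg-1}$ (resp.\ $\rrho^{\dg}$) is tame, losing $\dg-1$ (resp.\ $\dg$) derivatives, by iterating Lemma~\ref{lm:Z-cont}. Assembling these losses — the term $3r$ arising from the repeated use of the Hadamard identities and Lemma~\ref{lm:D-cont} when solving the implicit relation, and the extra $+\dg$ from the division by $\rrho^{\dg}$ near the $\phi_{i}$ — and transporting back through Sections~\ref{sect:corresp-flat-func}--\ref{sect:corresp-flat-maps}, which preserve tameness, shows that $\ShiftI$ is $C^{3r+\dg,r}_{W,W}$-continuous; since uniqueness was already established, this $\ShiftI$ is the unique map with the required property.
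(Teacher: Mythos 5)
Your overall strategy coincides with the paper's: pass to the half-plane of polar coordinates, use that property \AST\ forces the lifted field to have one component of the form $\rrho^{\dg-1}\gamma_1(\phi)$ (away from the degenerate directions) or $\rrho^{\dg}\gamma_2(\phi)$ (near them) with $\gamma_i$ non-vanishing, obtain the shift function on $\{\rrho>0\}$ from uniqueness of the time on non-closed orbits, and prove flatness by exhibiting it as a flat function divided by a power of $\rrho$. The one real difference of technique is in the flatness step: the paper does not Taylor-expand the flow in the time variable and solve implicitly for $\afunc$. It changes variables along the orbit and writes the exact formula $\secfunc(\phi,\rrho)=\int_{\phi}^{\hdifM_1(\phi,\rrho)}\frac{d\theta}{\rrho^{\dg-1}\gamma(\theta)}$ in the first case (and the analogous $\rrho$-integral in the second), then computes the two first-order partials explicitly and checks each is a flat function over a non-vanishing denominator. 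Your expansion $\mathbf{h}=\afunc\,\rrho^{\dg-1}\tilde W+O(\afunc^{2}\rrho^{\dg-1})$ is circular as written: the remainder can only be absorbed after you already know $\afunc$ is small near $\dHsp$, which is essentially what you are trying to prove, so the explicit integral is not merely a convenience. You also omit the small but necessary observation (paper's case $a=1$) that the ray $\{\phi=\phi_i,\ \rrho>0\}$ is itself an orbit, hence preserved by $\hdifM$, which is what lets one pull a factor of $\phi-\phi_i$ out of $\hdifM_1-\phi$ there.

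Separately, your accounting of the derivative loss is incorrect even though the total $3r+\dg$ matches the statement. In the paper the half-plane construction $\hdifM\mapsto\secfunc$ costs only $\dg$ derivatives (it is $C^{r+\dg,r}_{W,W}$: one division by a power of $\rrho$ plus one differentiation), and the entire factor $3r$ comes from transporting the flat function back to $\RRR^2$ via $\fbij^{-1}$, which by Theorem~\ref{th:corresp-flat-func} with $k=1$ is only $C^{3r,r}_{W,W}$-continuous, because the derivatives of $\Pmap^{-1}$ blow up like $(x^2+y^2)^{-(a+b)}$ and must be paid for. You attribute the $3r$ to Hadamard identities in the implicit solving and describe the transport back as merely ``preserving tameness''; if your half-plane step really cost $3r$ derivatives, composing with $\fbij^{-1}$ would yield $C^{9r+\dg,r}$, not $C^{3r+\dg,r}$. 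So the stated continuity estimate does not follow from your bookkeeping as given.
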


Now we can complete Theorem~\ref{th:global-shift-func-1}.
First notice that for a smooth function $\afunc$ and a mapping $\difM$ the following relations are equivalent:
\begin{equation}\label{equ:equiv-shifts}
\difM(z) = \BFlow(z,\afunc(z)) \qquad\text{and}\qquad  z = \BFlow(\difM(z),-\afunc(z)).
\end{equation}

Let $f\in\EndIdB$.
Then it follows from Proposition~\ref{pr:almost-shift-func} that for every $f\in\EndIdB$ there exists a $C^{\dg}_{W}$-neighborhood $\Nbh_{f}$ of $f$ in $\EndIdB$ and a well-defined map
$$
H:\Nbh_{f} \to \EndBVi{\infty}
$$
given by
$$
H(\difM)(z) = \BFlow(\difM(z), - \szShift(\difM)(z)).
$$

Then the following map $\lShift:\Nbh_{f} \to C^{\infty}(\nbh)$ defined by
$$\lShift  = \szShift + \ShiftI \circ H$$
satisfies the statement of our theorem.

Indeed, let $\difM\in\Nbh_{f}$ and $\hdifM=H(\difM)$.
Then
$$
\lShift(\difM) = 
\szShift(\difM) + \ShiftI \circ H(\difM) = 
\szShift(\difM) + \ShiftI(\hdifM).
$$

Whence
$$
\begin{array}{rcl}
 \BFlow\bigl(\difM(z), -\lShift(\difM)(z) \bigr) & = & 
\BFlow\bigl(\difM(z), -\szShift(\difM)(z) - \ShiftI(\hdifM)(z)\bigr)  = \\ [2mm]
& = & \BFlow\bigl( 
        \;\underbrace{ \BFlow\bigl(\difM(z), -\szShift(\difM)(z)\bigr) }_{\hdifM}\,, -\ShiftI(\hdifM)(z)\;
             \bigr)   = \\ [7mm]
& = & \BFlow\bigl( \hdifM(z), -\ShiftI(\hdifM)(z)\bigr) 
 \overset{\eqref{equ:shfunc-for-hh}~\text{òà}~\eqref{equ:equiv-shifts}}{=\!=\!=\!=\!=\!=\!=}  z,
\end{array}
$$
Therefore 
$$ \difM(z) = \BFlow(z, \lShift(\difM)(z)).$$
If $\deg\BFunc\geq3$, then $\lShift$ is defined on all of $\EndIdB$.

Theorem~\ref{th:global-shift-func-1} is completed modulo Proposition~\ref{pr:inf-id-shift-func}.
The proof of this proposition will be given in Section~\ref{sect:proof_of_shift2}.

%
%

\section{Polar coordinates}\label{sect:polar-coordinates}
Let $\Hsp = \{ (\phi,\rrho) \ | \ \rrho\geq 0\}  \subset \RRR^2$ be the closed upper half-plane of $\RRR^2$ with cartesian coordinates which we denote by $(\phi,\rrho)$.
Let also $\dHsp=\{\rrho=0\}$ be its boundary (i.e. $\phi$-axis), and $\IHsp=\{\rrho>0\}$ the interior of $\Hsp$.
Take another copy of $\RRR^2$ with coordinates $(x,y)$ and consider the following map 
$$\Pmap_k:\Hsp \to \RRR^2,\qquad
\Pmap_k(\phi,\rrho) = (\rrho^k \cos\phi, \rrho^k \sin\phi).
$$
For $k=1$ this map defines the so-called \emph{polar\/} coordinates in $\RRR^2$.
We will also denote the mapping $\Pmap_1$ simply by $\Pmap$.

Evidently, $\Pmap_k(\dHsp)=0\in \RRR^2$ and the restriction of $\Pmap_k$ onto $\IHsp$ is a $\ZZZ$-covering map:
$\Pmap_k:\IHsp \to \RRR^2\setminus\{\orig\}$, where the group $\ZZZ$ acts on $\Hsp$ by $n\cdot(\phi,\rrho) = (\phi+2\pi n,\rrho)$.

\begin{lem}\label{lm:pres-homog-pol}
Let $\BFunc:\RRR^2\to\RRR$ be a homogeneous polynomial of degree $\dg+1$ and $\phi_0\in\RRR$.
Then there are unique but not necessarily distinct numbers $\phi_i$, $(i=1,\ldots,l)$ such that
$$\phi_0-\frac{\pi}{2} \;\leq\; \phi_1 \;\leq \;\ldots\; \leq \phi_l \;<\; \phi_0+\frac{\pi}{2} $$
and a smooth function $\gamma$ such that $\gamma(\phi)\not=0$ for all $\phi\in(\phi_0-\frac{\pi}{2},\phi_0+\frac{\pi}{2})$ and
$$
\BFunc(\rrho\cos\phi,\rrho\sin\phi) = \rrho^{\dg+1}\cdot \gamma(\phi) \cdot \prod_{i=1}^{l} (\phi-\phi_i).
$$
\end{lem}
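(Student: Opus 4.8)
The plan is to pass from the homogeneous polynomial $\BFunc$ to its "angular profile'' and to use the prime decomposition~\eqref{equ:g-decomp} together with the behaviour of each factor along a ray. Write $\BFunc$ in the form~\eqref{equ:g-decomp}, so that
$$
\BFunc(x,y) = \prod_{i=1}^{l} L_i(x,y) \cdot \prod_{j=1}^{\dg+1-l} Q_j(x,y),
$$
with $L_i = a_i x + b_i y$ linear and $Q_j$ a definite quadratic form. Substituting $x = \rrho\cos\phi$, $y = \rrho\sin\phi$ and using homogeneity, each $L_i$ contributes $\rrho\,(a_i\cos\phi + b_i\sin\phi)$ and each $Q_j$ contributes $\rrho^2\,q_j(\phi)$, where $q_j(\phi) = Q_j(\cos\phi,\sin\phi)$ is a smooth, strictly-signed (hence nowhere-zero) function of $\phi$. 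Collecting powers of $\rrho$ gives a total factor $\rrho^{\dg+1}$ and leaves the angular function
$$
\Gamma(\phi) := \prod_{i=1}^{l} (a_i\cos\phi + b_i\sin\phi)\cdot \prod_{j=1}^{\dg+1-l} q_j(\phi).
$$
So the first step is purely bookkeeping: $\BFunc(\rrho\cos\phi,\rrho\sin\phi) = \rrho^{\dg+1}\Gamma(\phi)$.

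The second step is to locate the zeros of $\Gamma$ in the open interval $I := (\phi_0 - \tfrac{\pi}{2}, \phi_0 + \tfrac{\pi}{2})$. The definite quadratic factors $q_j$ never vanish, so all zeros come from the linear factors. For a fixed linear form $L_i = a_i x + b_i y$ (not identically zero, by property \AST), the zero set of $a_i\cos\phi + b_i\sin\phi$ is the arithmetic progression $\{\phi_i^{*} + \pi n : n\in\ZZZ\}$ for a suitable $\phi_i^{*}$; since $I$ has length exactly $\pi$ and is half-open on the right, it contains exactly one such zero, which I call $\phi_i$. This produces numbers $\phi_1,\dots,\phi_l$ in $I$, one for each linear factor, not necessarily distinct (distinct linear factors can still be proportional after the projectivization $\phi\mapsto\phi+\pi$ is \emph{not} allowed here — but two factors $L_i, L_{i'}$ proportional as linear forms would contradict property \AST\ having no multiple factors; proportional-up-to-sign is, however, possible only if they are literally proportional, so in fact the $\phi_i$ are distinct — I will phrase the statement to allow repetition to be safe and consistent with the lemma's wording). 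After relabelling we may assume $\phi_0 - \tfrac{\pi}{2} \le \phi_1 \le \cdots \le \phi_l < \phi_0 + \tfrac{\pi}{2}$.

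The third step is to extract the factor $\prod_{i=1}^{l}(\phi - \phi_i)$. For each $i$, near $\phi_i$ the function $a_i\cos\phi + b_i\sin\phi$ has a simple zero, and by Hadamard's lemma applied at $\phi_i$ we may write $a_i\cos\phi + b_i\sin\phi = (\phi - \phi_i)\,u_i(\phi)$ with $u_i$ smooth on all of $\RRR$ and $u_i(\phi_i) = -a_i\sin\phi_i + b_i\cos\phi_i \ne 0$; moreover $u_i$ is nonzero on all of $I$ because $a_i\cos\phi + b_i\sin\phi$ has no zero in $I$ other than $\phi_i$ and the quotient $(a_i\cos\phi + b_i\sin\phi)/(\phi - \phi_i)$ is continuous and sign-definite on $I\setminus\{\phi_i\}$, extending continuously (hence, being smooth, $C^\infty$) across $\phi_i$ with a nonzero value. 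Setting
$$
\gamma(\phi) := \prod_{i=1}^{l} u_i(\phi)\cdot \prod_{j=1}^{\dg+1-l} q_j(\phi),
$$
we get a smooth function with $\gamma(\phi)\ne 0$ for all $\phi\in I$ and $\Gamma(\phi) = \gamma(\phi)\prod_{i=1}^{l}(\phi - \phi_i)$, which is the asserted formula.

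For uniqueness: suppose $\rrho^{\dg+1}\gamma(\phi)\prod(\phi-\phi_i) = \rrho^{\dg+1}\tilde\gamma(\phi)\prod(\phi-\tilde\phi_i)$ with both lists in $[\phi_0-\tfrac\pi2,\phi_0+\tfrac\pi2)$ and $\gamma,\tilde\gamma$ nonvanishing on $I$. Dividing by $\rrho^{\dg+1}$ and evaluating the left side's zero set in the half-open interval $I$, the multiset $\{\phi_i\}$ is recovered as the zeros of the left-hand angular function counted with multiplicity (a zero of order $k$ of $\prod(\phi-\phi_i)$ stays a zero of order exactly $k$ after multiplication by the nonvanishing $\gamma$), and likewise for $\{\tilde\phi_i\}$; hence the sorted lists coincide, and then $\gamma = \tilde\gamma$ on $I$ by cancellation. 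The main point requiring care — and the step I expect to be the only genuine obstacle — is verifying smoothness and non-vanishing of $\gamma$ on the closed-from-the-left, open-from-the-right interval endpoints, i.e.\ that dividing out the simple zeros really yields a $C^\infty$ (not merely $C^0$) nowhere-zero factor on all of $I$; this is exactly what Hadamard's lemma delivers, applied separately at each $\phi_i$, after which the finite product of smooth nonvanishing functions is again smooth and nonvanishing.
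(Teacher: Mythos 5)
Your proof is correct and follows essentially the same route as the paper: both normalize each linear factor to its angular form, pick the unique zero of that factor in the half-open length-$\pi$ interval, and divide out the simple zero by a smooth nowhere-vanishing function (the paper does this explicitly via $b_i x + a_i y = \rrho\sin(\phi-\phi_i)$ and the positivity of $\sin(\phi-\phi_i)/(\phi-\phi_i)$ on $(\phi_i-\pi,\phi_i+\pi)$, whereas you invoke Hadamard's lemma abstractly). Your side remarks appealing to property \AST\ are unnecessary --- the lemma assumes nothing beyond homogeneity, and a linear factor of a prime decomposition is automatically not identically zero --- but they do no harm.
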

\begin{proof}
Notice that there exists a unique decomposition of $\BFunc$:
\begin{equation}\label{equ:Q-decomp}
\BFunc(x,y)  =   \tau(x,y) \cdot \prod_{i=1}^{l} (b_i x + a_i y),
\end{equation}
where 
$$
a_i=\cos\phi_i, \qquad b_i=\sin\phi_i,
$$
for a unique $\phi_i\in[\phi_0-\frac{\pi}{2}, \phi_0+\frac{\pi}{2})$, $(i=1,\ldots,l)$, such that $\phi_i\leq \phi_{i+1}$, and $\tau$ is a homogeneous polynomial of degree $\dg+1-l$ such that
$$\tau(x,y)\not=0, \qquad \text{for}\ (x,y)\not=0.$$
Therefore
$$
b_i x + a_i y  = \sin\phi_i \cdot \rrho\cos\phi + \cos\phi_i\cdot \rrho \sin\phi  = \rrho\cdot\sin(\phi-\phi_i),
$$
and thus
$$
\BFunc(\rrho\cos\phi,\rrho\sin\phi) = \rrho^{\dg+1}\cdot  \tau(\cos\phi,\sin\phi)\cdot  \prod_{i=1}^{l} \sin(\phi-\phi_i).
$$
Notice that the function $\frac{\sin(\phi-\phi_i)}{(\phi-\phi_i)}$ is smooth and strictly positive on the interval $(\phi_i-\pi,\phi_i+\pi)$ and $\tau(\cos\phi,\sin\phi)>0$ for every $\phi$, we obtain that
$$
\BFunc(\rrho\cos\phi,\rrho\sin\phi) = \rrho^{\dg+1}\cdot \gamma(\phi) \cdot \prod_{i=1}^{l} (\phi-\phi_i),
$$
for a certain smooth function $\gamma:\RRR\to\RRR$ such that $\gamma(\phi)\not=0$ for all $\phi\in(\phi_0-\frac{\pi}{2},\phi_0+\frac{\pi}{2})$.
\end{proof}

The level curves of a homogeneous polynomial $\BFunc:\RRR^2\to\RRR$ and the mapping $\BFunc\circ\Pmap_k:\Hsp\to\RRR$ are shown in Figure~\ref{fig:non-def} for $l=0$ and in Figure~\ref{fig:def} for $l\geq1$.

\begin{figure}[ht]
\begin{tabular}{ccc}
\includegraphics[height=3cm]{levels-nondef-h.eps} & \qquad \qquad &
\includegraphics[height=3cm]{levels-nondef-r2.eps} \\
$\Hsp$ & $\xrightarrow{~~~~~~\Pmap_k~~~~~~}$ & $\RRR^2$
\end{tabular}\caption{$l=0$.}
\protect\label{fig:non-def}
\end{figure}

\begin{figure}[ht]
\begin{tabular}{ccc}
\includegraphics[height=3cm]{levels-def-h.eps} & \qquad \qquad &
\includegraphics[height=3cm]{levels-def-r2.eps} \\
$\Hsp$ & $\xrightarrow{~~~~~~\Pmap_k~~~~~~}$ & $\RRR^2$
\end{tabular}\caption{$l\geq1$.}
\protect\label{fig:def}
\end{figure}

\subsection{Lifting vector fields from $\RRR^2$ to $\Hsp$.}
Let $\BFld$ be a smooth vector field defined in a neighborhood $\nbh$ of $\orig\in\RRR^2$.
Denote $$\anbh=\Pmap_k^{-1}(\nbh) \subset\Hsp.$$
If $\BFld(\orig)=0$, then there exists a unique $\ZZZ$-invariant vector field $\AFld$ on $\anbh$ vanishing on $\dHsp$, and such that the following diagram is commutative:
\begin{equation}\label{equ:BFld-lift}
\begin{CD}
& & T\anbh @>{T\Pmap_k}>> T\nbh \\
& & @A{\AFld}AA @AA{\BFld}A \\
\Hsp \;\; & \;\; \supset \;\; & \anbh @>{\Pmap_k}>> \nbh  & \;\; \subset \;\; & \;\; \RRR^2
\end{CD}
\end{equation}
Notice that in general $\AFld$ is smooth only on $\anbh\cap\IHsp$ and is just \emph{continuous\/} on $\Hsp$.

Let $\AFlow_t$ and $\BFlow_t$ be the local flows generated by $\AFld$ and $\BFld$ respectively.
Then for every $t\in\RRR$ the following diagram is commutative 
\begin{equation}\label{equ:BFlow-lift}
\begin{CD}
\anbh @>{\AFlow_t}>> \Hsp \\
@V{\Pmap_k}VV @VV{\Pmap_k}V \\
\nbh @>{\BFlow_t}>> \RRR^2
\end{CD}
\qquad 
\text{i.e.}
\qquad
\Pmap_k\circ\AFlow_t(x) = \BFlow_t\circ\Pmap_k(x),
\end{equation}
provided both parts of this equality are defined.

The following lemma is crucial for the proof of Proposition~\ref{pr:inf-id-shift-func}.
\begin{lem}
If $a,a' \in \anbh$ belong to the same orbit of $\AFlow$, then $b=\Pmap_k(a)$ and $b'=\Pmap_k(a')$ belong to the same orbit of $\BFlow$, see Figure~\ref{fig:time-shift}.
Moreover, the time between $a$ and $a'$ with respect to $\AFlow$ is equal to the time between $b$ and $b'$ with respect to $\BFlow$.
\end{lem}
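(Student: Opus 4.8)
The plan is to transfer the statement through the commutative diagrams~\eqref{equ:BFld-lift} and~\eqref{equ:BFlow-lift}, which express that $\Pmap_k$ is a (branched) semiconjugacy between the flows $\AFlow$ and $\BFlow$. First I would dispose of the easy part: if $a,a'\in\anbh$ lie on the same orbit of $\AFlow$, then $a'=\AFlow_s(a)$ for some $s\in\RRR$, and applying $\Pmap_k$ together with~\eqref{equ:BFlow-lift} gives $b'=\Pmap_k(a')=\Pmap_k(\AFlow_s(a))=\BFlow_s(\Pmap_k(a))=\BFlow_s(b)$, so $b$ and $b'$ do lie on a common orbit of $\BFld$ and are joined by time $s$. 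The real content is therefore the word ``equal'': I must show that the \emph{same} $s$ that carries $a$ to $a'$ under $\AFlow$ is the \emph{only} time carrying $b$ to $b'$ under $\BFlow$, i.e.\ that no other time $s'\neq s$ also satisfies $\BFlow_{s'}(b)=b'$.

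The main obstacle is thus the possible periodicity of the $\BFlow$-orbit through $b$, or more precisely ruling out that $\BFlow$ has a nontrivial recurrence time at $b$. Here is where property \AST\ enters. I would split into two cases according to whether $b=\orig$ or $b\neq\orig$. If $b=\orig$, then $a,a'\in\dHsp$ (since $\Pmap_k^{-1}(\orig)=\dHsp$), and $\AFld$ vanishes identically on $\dHsp$, so $\AFlow_s$ fixes $a$ for all $s$; thus $a'=a$ forces nothing, but in fact $a,a'$ on the same orbit of the zero field on $\dHsp$ means $a=a'$, whence $s$ can be taken $0$, and on the $\BFlow$ side $\BFlow_s(\orig)=\orig$ for all $s$, so ``the time between $b$ and $b'$'' is understood as $0$; this degenerate case should be handled by a remark rather than an argument. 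If $b\neq\orig$, then $b$ is a regular point of $\BFld$: by Lemma~\ref{lm:AST-for-polynom} (transported through the local coordinates and the nonvanishing multiplier $\eta$ in the definition of property \AST\ for a vector field) the origin is the \emph{unique} singular point of $\BFld$, so $\BFld(b)\neq0$. A flow is injective in time along any non-stationary orbit unless that orbit is periodic, so it remains to exclude periodic orbits of $\BFld$ near $\orig$.

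To exclude periodic orbits I would argue that the orbits of $\BFld = \eta H$ are, as unparametrised curves, the level curves of the homogeneous polynomial $\BFunc$ (since $\BFunc$ is a first integral and $\eta$ only reparametrises), and near $\orig$ — with $\orig$ the unique critical point of $\BFunc$ — the regular level curves $\{\BFunc=c\}$ in a small punctured neighborhood of $\orig$ are non-compact: this is visible from Lemma~\ref{lm:pres-homog-pol}, where in each angular sector $(\phi_0-\tfrac\pi2,\phi_0+\tfrac\pi2)$ the equation $\rrho^{\dg+1}\gamma(\phi)\prod_i(\phi-\phi_i)=c$ solves for $\rrho$ as a function of $\phi$ going to $0$ as $\phi\to\phi_i$, so the level set is a union of arcs limiting onto $\orig$ and is not a closed loop. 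A non-compact orbit of a flow cannot be periodic, so $\BFlow_\bullet(b)$ is injective in time, giving uniqueness of $s$ and hence the asserted equality of times. The one genuinely delicate point to get right is the bookkeeping when $l\geq1$ (Figure~\ref{fig:def}), where distinct orbits of $\AFlow$ in $\IHsp$ can map to the same orbit of $\BFlow$ only through the separatrix structure along the lines $\phi=\phi_i$; but since the statement only compares two points already known to lie on one $\AFlow$-orbit, this does not actually intervene, and I would note that explicitly to reassure the reader.
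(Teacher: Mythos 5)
Your first paragraph is exactly the paper's proof: the author writes $a'=\AFlow_{\tau}(a)$ and applies the semiconjugacy \eqref{equ:BFlow-lift} to get $b'=\Pmap_k(\AFlow_{\tau}(a))=\BFlow_{\tau}(\Pmap_k(a))=\BFlow_{\tau}(b)$, and stops there. The lemma is to be read in that weak sense --- the time $\tau$ realizing the passage from $a$ to $a'$ upstairs also realizes the passage from $b$ to $b'$ downstairs --- and note that it is stated for an arbitrary smooth $\BFld$ vanishing at $\orig$, before property \AST\ is ever invoked, so no uniqueness statement downstairs can be part of the claim.

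The two further paragraphs you add to secure uniqueness of the downstairs time contain a genuine error. In the case $l=0$, where $\BFunc$ is a product of definite quadratic forms, the level sets $\{\BFunc=c\}$ for small $c\neq 0$ are compact ovals encircling $\orig$ (e.g.\ $\BFunc=x^2+y^2$ gives circles), and the orbits of $\BFld=\eta\HFld$ are periodic. Your claim that the regular level curves near $\orig$ are non-compact rests on the factors $(\phi-\phi_i)$ forcing $\rrho\to 0$, but when $l=0$ there are no such factors and the level curve closes up; so ``the time between $b$ and $b'$'' is genuinely defined only modulo the period in that case, and no argument can make it unique. What is true, and what the paper actually uses (see the opening lines of the proof of Proposition~\ref{prop:lift-shift-map}), is that the orbits of $\AFld$ in $\IHsp$ are non-closed --- the covering $\Pmap_k:\IHsp\to\RRR^2\setminus\{\orig\}$ unwinds the periodic orbits --- so the shift time is unique \emph{upstairs}; this is precisely one of the reasons for passing to $\Hsp$ in the first place. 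Your treatment of the degenerate case $b=\orig$ and of the separatrix bookkeeping for $l\geq1$ is fine but, like the rest of the uniqueness discussion, not needed for the lemma as stated.
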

\begin{proof}
Indeed, if $a'=\AFlow_{\tau}(a)$, then 
$$
b'=\Pmap_k(a')=
\Pmap_k\circ\AFlow_{\tau}(a) =
\BFlow_{\tau}\circ\Pmap_k(a)=
\BFlow_{\tau}(b).
$$
Lemma is proved.
\end{proof}

\begin{figure}[ht]
\includegraphics[height=3cm]{time-shift.eps}
\caption{}\protect\label{fig:time-shift}
\end{figure}

\begin{lem}\label{lm:formulas-for-F}
Let $\BFunc:\RRR^2\to\RRR$ be a homogeneous polynomial of degree $\dg+1\geq2$, $\HFld=(-\BFunc'_{y},\BFunc'_{x})$ be the Hamiltonian vector field of $\BFunc$, and $$\eta:\RRR^2\to\RRR\setminus\{0\}$$ a smooth everywhere non-zero function.
Consider the following vector field $$\BFld=\eta\HFld=\eta\cdot(-\BFunc'_{y},\BFunc'_{x})$$ and let $\AFld=(\AFld_1,\AFld_2)$ be the vector field on $\Hsp$ induced by $\BFld$ via mapping 
$$\Pmap_1=\Pmap:\Hsp\to\RRR^2, \qquad \Pmap(\phi,\rrho) = (\rrho\cos\phi,\rrho\sin\phi).$$
Write $\BFunc$ in the following form
$$\BFunc(x,y) = y^{a} \Rmap(x,y),$$ 
where $a\geq0$ and $\Rmap$ is a polynomial that is not divided by $y$.
Then 
\begin{equation}\label{equ:F1}
\AFld_1(\phi,\rrho) = 
\frac{(\dg+1)\,\cdot\,\BFunc(\Pmap(\phi,\rrho))}{\rrho^2} = 
\rrho^{\dg-1}\, \phi^{a}\, \gamma_1(\phi),
\end{equation}
for a certain smooth function $\gamma_1:\RRR\to\RRR$ such that $\gamma_1(0)\not=0$.

Moreover, if $a\geq1$, then 
\begin{equation}\label{equ:F2}
\AFld_2(\phi,\rrho) = \rrho^{\dg}\, \phi^{a-1}\,  \gamma_2(\phi),
\end{equation}
where $\gamma_2:\RRR\to\RRR$ is a smooth function such that $\gamma_2(0)\not=0$.
\end{lem}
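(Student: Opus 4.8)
The plan is to compute the lifted vector field $\AFld$ directly in polar coordinates, using the diagram~\eqref{equ:BFld-lift}. Since $\Pmap(\phi,\rrho)=(\rrho\cos\phi,\rrho\sin\phi)$, the Jacobian of $\Pmap$ at a point with $\rrho>0$ is invertible, and one can write the columns of $T\Pmap$ as $\partial_\phi\Pmap=(-\rrho\sin\phi,\rrho\cos\phi)$ and $\partial_\rrho\Pmap=(\cos\phi,\sin\phi)$. Solving $T\Pmap\cdot(\AFld_1,\AFld_2)=\BFld=\eta\cdot(-\BFunc'_y,\BFunc'_x)$ for $(\AFld_1,\AFld_2)$ by Cramer's rule, the denominator is $\det T\Pmap=\rrho$, and the numerators are expressed through $\BFunc'_x,\BFunc'_y$ evaluated at $\Pmap(\phi,\rrho)$. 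For $\AFld_1$ one gets $\AFld_1=\dfrac{\eta}{\rrho}\bigl(\cos\phi\cdot\BFunc'_x+\sin\phi\cdot\BFunc'_y\bigr)\big|_{\Pmap(\phi,\rrho)}$, and by the Euler identity for the homogeneous polynomial $\BFunc$ of degree $\dg+1$, $x\BFunc'_x+y\BFunc'_y=(\dg+1)\BFunc$, so $\cos\phi\cdot\BFunc'_x+\sin\phi\cdot\BFunc'_y=\dfrac{(\dg+1)\BFunc}{\rrho}$ at $\Pmap(\phi,\rrho)$. Absorbing $\eta$ (which is smooth and nonvanishing, hence contributes a smooth nonzero factor) into the final $\gamma_i$, this yields the first equality in~\eqref{equ:F1}.

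For the second equality in~\eqref{equ:F1}, I would apply Lemma~\ref{lm:pres-homog-pol} with $\phi_0=0$: it gives $\BFunc(\rrho\cos\phi,\rrho\sin\phi)=\rrho^{\dg+1}\gamma(\phi)\prod_{i=1}^l(\phi-\phi_i)$ with $\gamma$ smooth and nonzero on $(-\tfrac\pi2,\tfrac\pi2)$. Dividing by $\rrho^2$ gives $\rrho^{\dg-1}$ times a smooth function of $\phi$. It then remains to identify the multiplicity of the factor $\phi$ in this product with the exponent $a$ coming from the decomposition $\BFunc(x,y)=y^a\Rmap(x,y)$. The point is that $y=\rrho\sin\phi$, and $\sin\phi$ vanishes to order exactly one at $\phi=0$; writing $\BFunc(x,y)=y^a\Rmap(x,y)$ and substituting $x=\rrho\cos\phi$, $y=\rrho\sin\phi$, the factor $y^a$ becomes $\rrho^a\sin^a\phi=\rrho^a\phi^a\cdot(\text{smooth, nonzero near }0)$, while $\Rmap(\rrho\cos\phi,\rrho\sin\phi)=\rrho^{\dg+1-a}\Rmap(\cos\phi,\sin\phi)$ and $\Rmap(1,0)\neq0$ because $\Rmap$ is not divisible by $y$. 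Collecting the $\rrho$-powers and the $\phi$-factors and folding the nonvanishing smooth pieces together with $\eta$ and $(\dg+1)$ into a single $\gamma_1$ with $\gamma_1(0)\neq0$ gives~\eqref{equ:F1}.

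For~\eqref{equ:F2}, the same Cramer computation gives $\AFld_2=\dfrac{\eta}{\rrho}\bigl(-(-\sin\phi)\BFunc'_x\cdot\rrho-\ldots\bigr)$; more precisely $\AFld_2=\eta\bigl(\sin\phi\cdot(-\BFunc'_y)\cdot(-1)+\cdots\bigr)$, which after simplification is $\AFld_2=\eta\bigl(-\sin\phi\cdot\BFunc'_x+\cos\phi\cdot\BFunc'_y\bigr)\big|_{\Pmap(\phi,\rrho)}$ up to a sign; this equals (up to the smooth nonzero $\eta$) the $\phi$-derivative $\dfrac1\rrho\,\partial_\phi\bigl(\BFunc(\rrho\cos\phi,\rrho\sin\phi)\bigr)\cdot\dfrac1{\text{(stuff)}}$ — in fact $\partial_\phi[\BFunc(\rrho\cos\phi,\rrho\sin\phi)]=\rrho(-\sin\phi\,\BFunc'_x+\cos\phi\,\BFunc'_y)$. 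Using Lemma~\ref{lm:pres-homog-pol} again, $\BFunc(\rrho\cos\phi,\rrho\sin\phi)=\rrho^{\dg+1}\phi^a\,\delta(\phi)$ with $\delta$ smooth and $\delta(0)\neq0$ (by the multiplicity computation of the previous paragraph), so $\partial_\phi$ of it is $\rrho^{\dg+1}\bigl(a\phi^{a-1}\delta(\phi)+\phi^a\delta'(\phi)\bigr)=\rrho^{\dg+1}\phi^{a-1}\bigl(a\delta(\phi)+\phi\delta'(\phi)\bigr)$, and the bracket evaluates to $a\delta(0)\neq0$ at $\phi=0$ precisely when $a\geq1$. Dividing by $\rrho$ and absorbing $\eta$ and constants yields~\eqref{equ:F2} with $\gamma_2(0)\neq0$.

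The only genuine obstacle is bookkeeping: making sure the exponent $a$ in $\BFunc=y^a\Rmap$ really coincides with the multiplicity of the linear factor $y$ (equivalently $\phi_i=0$) appearing in the decomposition of Lemma~\ref{lm:pres-homog-pol}, and that all the leftover factors ($\eta$, $\tau(\cos\phi,\sin\phi)$, $\dfrac{\sin\phi}{\phi}$ raised to powers, $\Rmap(\cos\phi,\sin\phi)$, the constant $\dg+1$) are smooth in $\phi$ and nonzero at $\phi=0$, so that they legitimately combine into a single smooth nowhere-locally-zero $\gamma_i$. I do not expect any analytic difficulty beyond that: everything reduces to the Euler identity, the explicit Jacobian of $\Pmap$, and Lemma~\ref{lm:pres-homog-pol}.
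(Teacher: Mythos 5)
Your proposal is correct and, for the first formula, follows the same route as the paper: invert the Jacobian of $\Pmap$ (Cramer's rule), obtain $\AFld_1=\tfrac{\eta}{\rrho}\bigl(\cos\phi\,\BFunc'_x+\sin\phi\,\BFunc'_y\bigr)\circ\Pmap$, apply the Euler identity, and read off the $\phi$-multiplicity from Lemma~\ref{lm:pres-homog-pol} via $y^a=\rrho^a\sin^a\phi$ with $\sin\phi/\phi$ smooth and nonzero and $\Rmap(1,0)\neq0$. Where you genuinely diverge is the proof of~\eqref{equ:F2}: the paper works in Cartesian coordinates, writes the numerator $N=-x\BFunc'_y+y\BFunc'_x$, expands it by the Leibniz rule applied to $\BFunc=y^a\Rmap$ as $N=-axy^{a-1}\Rmap-xy^a\Rmap'_y+y^{a+1}\Rmap'_x$, and concludes that $N$ is divisible by $y^{a-1}$ but not by $y^a$; you instead observe that $\rrho\,\AFld_2/\eta=-\partial_\phi\bigl(\BFunc\circ\Pmap\bigr)$ and differentiate the polar expression $\rrho^{\dg+1}\phi^a\delta(\phi)$ directly, getting $\rrho^{\dg+1}\phi^{a-1}\bigl(a\delta(\phi)+\phi\delta'(\phi)\bigr)$ with the bracket equal to $a\delta(0)\neq0$ at $\phi=0$. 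Your version is slightly slicker (one identity replaces the divisibility bookkeeping) and the sign ambiguity you leave open is harmless since only the vanishing order matters; both arguments are valid. One caveat you share with the paper: $\eta\circ\Pmap$ depends on $\rrho$ as well as $\phi$, so it cannot literally be absorbed into a factor $\gamma_i(\phi)$ of one variable (and the first equality of~\eqref{equ:F1} as printed already omits $\eta$); this imprecision is in the source statement, not something your argument introduces.
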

\begin{cor}\label{cor:formulas-for-F}
If $\BFunc$ has property \AST, then $a=0$ or $1$.
Hence 
\begin{align*}
& \AFld_1(\phi,\rrho) = \rrho^{\dg-1}\gamma_1(\phi), \quad \text{if} \ \ a=0, \\
& \AFld_2(\phi,\rrho) = \rrho^{\dg} \gamma_2(\phi),  \quad \text{if} \ \ a=1. 
\end{align*}
Thus in both cases one of the coordinate functions of $\AFld$ does not divides by $\phi$.
\end{cor}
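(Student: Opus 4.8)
The plan is to deduce the corollary from Lemma~\ref{lm:formulas-for-F} in two short steps: first use property \AST\ to pin down the exponent $a$ in the decomposition $\BFunc(x,y)=y^{a}\Rmap(x,y)$, and then substitute that value of $a$ into formulas \eqref{equ:F1} and \eqref{equ:F2}.

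For the first step I would argue as follows. By Lemma~\ref{lm:AST-for-polynom}, property \AST\ means precisely that the prime decomposition of $\BFunc$ over $\RRR$ has no repeated factor. Now $y$ is a linear prime, and the decomposition $\BFunc=y^{a}\Rmap$ with $y\nmid\Rmap$ shows that $y$ divides $\BFunc$ exactly when $a\geq1$, and in that case with multiplicity exactly $a$. Hence \AST\ forces $a\leq1$, so $a=0$ or $a=1$, which is the first assertion.

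For the second step I would simply plug each value of $a$ into the lemma. If $a=0$, then $\phi^{a}\equiv1$ and \eqref{equ:F1} gives $\AFld_1(\phi,\rrho)=\rrho^{\dg-1}\gamma_1(\phi)$ with $\gamma_1(0)\neq0$. If $a=1$, then the hypothesis $a\geq1$ of \eqref{equ:F2} is satisfied and $\phi^{a-1}\equiv1$, so $\AFld_2(\phi,\rrho)=\rrho^{\dg}\gamma_2(\phi)$ with $\gamma_2(0)\neq0$; note that in this second case $\AFld_1=\rrho^{\dg-1}\phi\,\gamma_1(\phi)$ \emph{is} divisible by $\phi$, which is exactly why the corollary claims only that \emph{one} coordinate function avoids the factor $\phi$. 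It then remains to see that a function of the shape $\rrho^{m}\gamma(\phi)$, with $m\geq0$ and $\gamma$ smooth and $\gamma(0)\neq0$, is not divisible by $\phi$: the monomial $\rrho^{m}$ contributes no factor of $\phi$, and by Hadamard's lemma a smooth function $\gamma(\phi)$ can be written $\phi\cdot\widetilde\gamma(\phi)$ with $\widetilde\gamma$ smooth if and only if $\gamma(0)=0$. Applying this to $\AFld_1$ when $a=0$ and to $\AFld_2$ when $a=1$ completes the argument.

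I do not anticipate a real obstacle here, since the substantive content is already contained in Lemma~\ref{lm:formulas-for-F}. The only point needing a little care is making precise what ``divisible by $\phi$'' means for the \emph{smooth} (rather than polynomial) coefficients $\gamma_i$ — which is what the invocation of Hadamard's lemma handles — together with the trivial but worth-checking boundary case $\dg=1$, where $\rrho^{\dg-1}=1$ and the reasoning is unchanged.
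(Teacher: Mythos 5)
Your proposal is correct and follows exactly the route the paper intends: the paper states this corollary without a separate proof precisely because it is the immediate consequence of Lemma~\ref{lm:AST-for-polynom} (no repeated prime factors forces $a\leq 1$) combined with substituting $a=0$ into \eqref{equ:F1} and $a=1$ into \eqref{equ:F2}. Your extra remark on what ``divisible by $\phi$'' means for smooth coefficients, justified via Hadamard's lemma and $\gamma_i(0)\neq0$, is a correct and welcome clarification but does not change the argument.
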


\begin{proof}[Proof of Lemma~\ref{lm:formulas-for-F}.]
First notice that for a homogeneous polynomial $\BFunc$ of degree $\dg+1$ the following \emph{Euler identity\/} holds true:
\begin{equation}\label{equ:EulerIdent}
 x \BFunc'_{x} + y \BFunc'_{y} = (\dg+1)\BFunc.
\end{equation}
Also, it follows from Lemma~\ref{lm:pres-homog-pol} that every multiple $y$ in $\BFunc$ yields the multiple $\phi$ in $\BFunc\circ\Pmap$. 
Therefore
\begin{equation}\label{equ:QP-rphg}
\BFunc\circ\Pmap(\phi,\rrho) = \rrho^{\dg+1}\, \phi^{a}\, \gamma_1(\phi),
\end{equation}
for a certain smooth function $\gamma_1:\RRR\to\RRR$ such that $\gamma_1(0)\not=0$.

Consider now the Jacobi matrix of $\Pmap$:
$$
J(\Pmap) = 
\left(
\begin{matrix}
-\rrho\sin\phi & \cos\phi \\
\rrho\cos\phi & \sin\phi
\end{matrix}
\right)
$$
Then it follows from the commutative diagram~\eqref{equ:BFld-lift} that 
$$\BFld\circ\Pmap=J(\Pmap)\cdot \AFld,$$ i.e.
$$
\left(
\begin{matrix}
\BFld_1\circ\Pmap \\ \BFld_2\circ\Pmap
\end{matrix}
\right)
= 
\left(
\begin{matrix}
-\rrho\sin\phi & \cos\phi \\
\rrho\cos\phi & \sin\phi
\end{matrix}
\right)
\cdot
\left(
\begin{matrix}
\AFld_1 \\ \AFld_2
\end{matrix}
\right),
$$
whence 
$$
\left(
\begin{matrix}
\AFld_1 \\ \AFld_2
\end{matrix}
\right)
= 
\left(
\begin{matrix}
-\frac{1}{\rrho}\sin\phi & \frac{1}{\rrho}\cos\phi \\
\cos\phi & \sin\phi
\end{matrix}
\right)
\cdot
\left(
\begin{matrix}
\BFld_1\circ\Pmap \\ \BFld_2\circ\Pmap
\end{matrix}
\right).
$$
Therefore
$$
\AFld_1 = \frac{-(\BFld_1\circ\Pmap)\cdot  \sin\phi + (\BFld_2\circ\Pmap)\cdot \cos\phi}{\rrho}.
$$
Denote 
$$
\Amap(x,y) = \frac{-y \BFld_1  + x \BFld_2}{x^2+y^2} = 
\frac{y \BFunc'_{y}  + x \BFunc'_{x}}{x^2+y^2} \cdot \eta \overset{\eqref{equ:EulerIdent}}{=\!=\!=\!=} 
\frac{(\dg+1)\BFunc}{x^2+y^2} \cdot \eta.
$$
Then 
$$
\AFld_1 = \Amap\circ\Pmap
\overset{\eqref{equ:QP-rphg}}{=\!=\!=\!=} 
\rrho^{\dg-1}\, \phi^{a}\, \gamma_1(\phi).
$$
Similarly, 
$$
\AFld_2 =(\BFld_1\circ\Pmap)\cdot  \cos\phi + (\BFld_2\circ\Pmap)\cdot \sin\phi.
$$
Put
$$
\Bmap(x,y) = \frac{x \BFld_1  + y \BFld_2}{\sqrt{x^2+y^2}} = 
\frac{- x \BFunc'_{y}  + y \BFunc'_{x}}{\sqrt{x^2+y^2}} \cdot \eta.
$$
Then $\AFld_2 = \Bmap\circ\Pmap$.
Since the numerator of the latter fraction is a homogeneous polynomial of degree $\dg+1$, it follows from Lemma~\ref{lm:pres-homog-pol} that
$$
\AFld_2=\rrho^{\dg}\, \phi^{a_1}\, \gamma_2(\phi),
$$ 
for certain $a_1\geq0$ and a smooth function $\gamma_2:\RRR\to\RRR$ such that $\gamma_2(0)\not=0$.

It remains to prove that if $a\geq1$ then $$a_1 = a-1.$$
Equivalently, we have to show that the numerator:
$$
N = -x\BFunc'_{y}+y\BFunc'_{x} 
$$
of $\Bmap$ is divided by $y^{a-1}$ but not by $y^{a}$.

Notice that 
$$
\BFunc'_{x} = y^{a} \Rmap'_{x}, \qquad
\BFunc'_{y} = a y^{a-1} \Rmap + y^{a} \Rmap'_{y}.
$$
Whence
$$ 
N = -x\BFunc'_{y}+y\BFunc'_{x}=
-a x y^{a-1} \Rmap - x y^{a} \Rmap'_{y} + y^{a+1} \Rmap'_{x}
$$ 
Since $\Rmap$ is not divided by $y$, it follows that $N$ is divided by $y^{a-1}$ but not by $y^{a}$.
\end{proof}

\section{Correspondence between flat functions}\label{sect:corresp-flat-func}
Recall that a smooth function $\afunc:\RRR^n\to\RRR$ is \emph{flat\/} on a subset $K\subset\RRR^n$ provided all partial derivatives of $\afunc$ of all orders vanish at avery point $x\in K$.

Let $\ARtwo$ be the set of smooth functions $\afunc:\RRR^2\to\RRR$ that are flat at $\orig$

Let also $\AZHR$ be the set of all $\ZZZ$-invariant smooth functions $\hat\alpha:\Hsp\to\RRR$ that are flat on $\dHsp$.

\begin{thm}\label{th:corresp-flat-func}
The mapping 
$$\Pmap_k:\Hsp\to\RRR^2, \qquad \Pmap_k(\phi,\rrho)=(\rrho^k\cos\phi,\rho^k\sin\phi)$$
yields a bijection 
$$
\fbij_k:\ARtwo \to \AZHR, \qquad \fbij_k(\afunc) = \afunc\circ\Pmap_k
$$ 
which is $C^{r,r}_{W,W}$-continuous and its inverse $\fbij_k^{-1}$ is $C^{(2k+1)r,r}_{W,W}$-continuous for every $r\geq 0$.
\end{thm}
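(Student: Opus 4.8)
The plan is to verify the three claims in turn: that $\fbij_k$ is well defined (lands in $\AZHR$), that it is a bijection, and that it and its inverse have the stated tame continuity properties.

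First I would check that $\fbij_k$ is well defined. If $\afunc\in\ARtwo$, then $\afunc\circ\Pmap_k$ is smooth on $\IHsp$ and continuous on $\Hsp$; it is $\ZZZ$-invariant because $\Pmap_k$ is $\ZZZ$-invariant. The substantive point is that $\afunc\circ\Pmap_k$ is smooth up to $\dHsp$ and flat there. For this I would use a Taylor-with-remainder argument: since $\afunc$ is flat at $\orig$, for every $N$ one has $|\afunc(x,y)|\leq C_N(x^2+y^2)^{N/2}$ with similar estimates on all derivatives; pulling back along $\Pmap_k$ replaces $(x^2+y^2)^{1/2}$ by $\rrho^k$, and the derivatives of $\Pmap_k$ with respect to $(\phi,\rrho)$ are polynomial in $\rrho$ (indeed $\partial/\partial\rrho$ lowers the $\rrho$-power by one, $\partial/\partial\phi$ keeps it), so each $(\phi,\rrho)$-derivative of $\afunc\circ\Pmap_k$ of order $r$ is bounded by a constant times $\rrho^{kN-r}$ for every $N$. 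Letting $N\to\infty$ shows all these derivatives extend continuously by $0$ to $\dHsp$, hence $\afunc\circ\Pmap_k$ is smooth and flat on $\dHsp$. The same chain of inequalities, read as a bound on $\|\fbij_k(\afunc)\|^r_K$ by $\|\afunc\|^r_{\Pmap_k(K)}$ (with constants depending only on $K$ and $k$), gives the $C^{r,r}_{W,W}$-continuity of $\fbij_k$.

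Next, injectivity of $\fbij_k$ is immediate since $\Pmap_k|_{\IHsp}$ is surjective onto $\RRR^2\setminus\{\orig\}$, so $\afunc$ is determined on $\RRR^2\setminus\{\orig\}$ by $\afunc\circ\Pmap_k$ and hence everywhere by continuity. For surjectivity, given $\hat\alpha\in\AZHR$, its $\ZZZ$-invariance lets it descend to a well-defined function $\afunc$ on $\RRR^2\setminus\{\orig\}$ with $\afunc\circ\Pmap_k=\hat\alpha$ there; set $\afunc(\orig)=0$. One must show $\afunc$ is smooth at $\orig$ and flat there. Here is the one genuinely delicate point, and I expect it to be the main obstacle: we must recover smoothness in the cartesian variables $(x,y)$ from smoothness in $(\phi,\rrho)$, even though the change of variables $\Pmap_k$ is singular on $\dHsp$. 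The flatness of $\hat\alpha$ along $\dHsp$ is exactly what makes this possible: writing $\rrho=(x^2+y^2)^{1/(2k)}$ and $\phi=\arg(x,y)$, the functions $\partial x/\partial(\phi,\rrho)$ etc.\ blow up only like negative powers of $\rrho$, while each $(\phi,\rrho)$-derivative of $\hat\alpha$ vanishes to infinite order in $\rrho$; so by the chain rule every $(x,y)$-derivative of $\afunc$ on $\RRR^2\setminus\{\orig\}$ is a finite sum of terms, each a $(\phi,\rrho)$-derivative of $\hat\alpha$ times a Laurent monomial in $\rrho$ (times trigonometric functions), and each such term is $O(\rrho^M)=O((x^2+y^2)^{M/2k})$ for all $M$. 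This forces all $(x,y)$-derivatives to extend continuously by $0$ across $\orig$, so $\afunc\in\ARtwo$. Tracking the worst negative power of $\rrho$ produced by $r$ applications of the chain rule — each application of $\partial/\partial x$ or $\partial/\partial y$ costs at most a factor $\rrho^{-(2k-1)}$ from $\partial\rrho/\partial x$ and $\rrho^{-1}$-type factors from $\partial\phi/\partial x$, but the dominant cost is $2k-1$ per derivative, giving $\rrho^{-(2k-1)r}$ and involving $\hat\alpha$-derivatives up to order $r$ — one sees that $\|\afunc\|^r_K$ is controlled by $\|\hat\alpha\|^{?}_{\Pmap_k^{-1}(K)}$; a careful bookkeeping (using flatness to absorb negative powers at the cost of raising the order of derivative needed) yields the stated exponent $(2k+1)r$, so $\fbij_k^{-1}$ is $C^{(2k+1)r,r}_{W,W}$-continuous.

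Finally I would assemble these: well-definedness plus injectivity plus surjectivity give the bijection $\fbij_k$, and the two estimates give the continuity statements, completing the proof. The only real work is the derivative bookkeeping in the previous paragraph; the structural facts (descent via $\ZZZ$-invariance, the covering property of $\Pmap_k|_{\IHsp}$) are routine, and the auxiliary continuity lemmas of Section~\ref{sect:cont-maps} — especially Lemmas~\ref{lm:D-cont} and~\ref{lm:Z-cont} on differentiation and on multiplication by a coordinate — can be invoked to organize the estimate rather than re-deriving everything by hand.
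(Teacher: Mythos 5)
Your plan follows essentially the same route as the paper: the forward direction by composition with the smooth map $\Pmap_k$ plus transfer of flatness, the bijection by $\ZZZ$-invariant descent over $\RRR^2\setminus\{\orig\}$, and the inverse direction by the chain-rule expansion of cartesian derivatives into polar derivatives of $\hat\afunc$ divided by powers of $(x^2+y^2)^{1/2k}$, with flatness absorbing the singular factors and Lemmas~\ref{lm:D-cont} and~\ref{lm:Z-cont} converting the division by $\rrho^{s}$ (with $s\leq 2kr$) plus the derivative order $j\leq r$ into the exponent $(2k+1)r$. Your per-derivative cost accounting is slightly off in detail (e.g.\ $\phi'_x=-y/(x^2+y^2)$ contributes a $\rrho^{-2k}$ denominator with a smooth numerator, not a ``$\rrho^{-1}$-type'' factor), but since you defer to the careful bookkeeping and land on the correct exponent, this matches the paper's Lemma~\ref{lm:estim-for-Dg}.
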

\proof
For each $r=0,\ldots,\infty$ let $\Func^{r}(\RRR^2,\orig)$ be the space of all $C^{r}$-functions $\afunc:\RRR^2\to\RRR$ such that $\afunc(\orig)=0$, 
and $\Func^{r}(\Hsp,\dHsp)$ be the space of $\ZZZ$-invariant $C^{r}$-functions $\hat\afunc:\Hsp\to\RRR$ such that $\hat\afunc(\dHsp)=0$.

Then for every $\afunc\in\Func^{0}(\RRR^2,\orig)$ the function $\hat\afunc = \afunc\circ\Pmap_k$ is also continuous on $\Hsp$, $\ZZZ$-invariant, and vanish on $\dHsp$, i.e. $\hat\afunc\in\Func^{0}_{\ZZZ}(\Hsp,\dHsp)$.
Thus we obtain a well-defined mapping 
\begin{equation}\label{equ:q-map-on-cont}
\fbij_k:\Func^{0}(\RRR^2,\orig) \to \Func^{0}_{\ZZZ}(\Hsp,\dHsp), \qquad \fbij_k(\afunc)=\afunc\circ\Pmap_k.
\end{equation}

Conversely, every $\hat\afunc\in\Func^{0}_{\ZZZ}(\Hsp,\dHsp)$ yields a unique function $\afunc\in\Func^{0}(\RRR^2,\orig)$, whence $\fbij_k$ is a bijection.

Since $\Pmap_k$ is smooth, it follows that 
$$\fbij_k\bigl(\,\Func^{\infty}(\RRR^2,\orig)\,\bigr) \; \subset \; \Func^{\infty}_{\ZZZ}(\Hsp,\dHsp)$$  
and the restriction map
$$\fbij_k:\Func^{\infty}(\RRR^2,\orig) \to \Func^{\infty}_{\ZZZ}(\Hsp,\dHsp)$$
is $C^{r,r}_{W,W}$-continuous for every $r=0,\ldots,\infty$.
But this mapping is not onto, e.g. the second coordinate $\rrho:\Hsp\to\RRR$ being a smooth function is the image of the function $(x^2+y^2)^{1/2k}$ which is not differentiable at $\orig\in\RRR^2$.

Suppose that $\afunc$ is flat at $\orig$.
Then it is easy to see that $\hat\afunc$ is flat at every point of $\dHsp$, i.e. \ $\fbij_k(\ARtwo) \subset \AZHR$.
The following Lemma~\ref{lm:estim-for-Dg} shows that in fact
$$\fbij_k(\ARtwo) = \AZHR$$
and the inverse map $\fbij_k^{-1}$ is $C^{(2k+1)r,r}_{W,W}$-continuous for every $r\geq 0$.

\begin{lem}\label{lm:estim-for-Dg}
Suppose that $\hat\afunc\in\AZHR$. 
Let $\afunc=\fbij_k^{-1}(\hat\afunc)$, and 
\begin{equation}\label{equ:dg-ab}
D\afunc = \frac{\partial^{a+b}\afunc}{\partial x^{a}\; \partial y^{b}}
\end{equation}
be a partial derivative of $\afunc$ of order $a+b$.

{\rm(i)}~Then $D\afunc$ is a sum of finitely many functions of the form
$$\frac{A \cdot B}{(x^2+y^2)^{s/2k}},$$
where $A:\RRR^2\to\RRR$ is a smooth function which does not depend on $\afunc$ and 
$$
B = \fbij_k^{-1}\left(  \frac{\partial^{j}\hat\afunc}{\partial \phi^{j_1} \,\partial \rrho^{j_2}}    \right),
\qquad
j = j_1+j_2 \ \leq \ a+b,
$$
and $s$ is positive integer such that\ $s/2k \leq a+b$.
The total number of these functions depends only on $a$ and $b$ and does not depend on $\afunc$.

{\rm(ii)}~$D\afunc$ is a continuous function vanishing at $\orig\in\RRR^2$.
Hence $\afunc$ is a smooth function flat at $\orig\in\RRR^2$, i.e. $\fbij_k$ is a bijection between $\ARtwo$ and $\AZHR$.

{\rm(iii)}~For every $r\geq 0$ and a compact $K\subset\RRR^2$ we have the following estimaiton:
\begin{equation}\label{equ:estim-qinv-cont}
 \|\afunc\|^{r}_{K} \leq C \|\hat\afunc\|^{(2k+1)r}_{L},
\end{equation}
where 
\begin{equation}\label{equ:def-L}
 L  \ = \ \Pmap_k^{-1}(K) \ \; \cap \ \; [0,2\pi] \times [0,\infty),
\end{equation}
and $C>0$ does not depend on $\hat\afunc$.
Whence the inverse mapping $\fbij_k^{-1}$ is $C^{(2k+1)r,r}_{W,W}$-continuous.
\end{lem}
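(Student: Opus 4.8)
The plan is to work off the origin, where $\Pmap_k$ is a local diffeomorphism (its Jacobian determinant equals $-k\rrho^{2k-1}\neq 0$ on $\IHsp$), so that on $\RRR^2\setminus\{\orig\}$ the function $\afunc=\fbij_k^{-1}(\hat\afunc)$ is genuinely smooth and equals $\hat\afunc$ composed with a local inverse of $\Pmap_k$; one then controls the behaviour of its partial derivatives as $(x,y)\to\orig$ using the flatness of $\hat\afunc$ along $\dHsp$. The computational engine is the chain rule obtained by inverting the Jacobi matrix of $\Pmap_k$: on $\RRR^2\setminus\{\orig\}$,
$$\frac{\partial}{\partial x}=-\frac{y}{x^2+y^2}\,\frac{\partial}{\partial \phi}+\frac{x/k}{(x^2+y^2)^{(2k-1)/2k}}\,\frac{\partial}{\partial \rrho},\qquad\frac{\partial}{\partial y}=\frac{x}{x^2+y^2}\,\frac{\partial}{\partial \phi}+\frac{y/k}{(x^2+y^2)^{(2k-1)/2k}}\,\frac{\partial}{\partial \rrho},$$
where, for a function written as $\fbij_k^{-1}(F)$, the operators $\partial/\partial\phi$ and $\partial/\partial\rrho$ are to be read as $\fbij_k^{-1}(\partial F/\partial\phi)$ and $\fbij_k^{-1}(\partial F/\partial\rrho)$; these make sense as continuous functions vanishing at $\orig$ because every $\phi,\rrho$-derivative of $\hat\afunc$ is again $\ZZZ$-invariant and flat on $\dHsp$.

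For (i) I would induct on $a+b$. The base case $a+b=0$ is $\afunc=\fbij_k^{-1}(\hat\afunc)$ itself, with $A=1$, $s=0$. For the step, apply $\partial/\partial x$ or $\partial/\partial y$ to one summand $A\cdot B\cdot(x^2+y^2)^{-s/2k}$ with $B=\fbij_k^{-1}(\partial^{j_1+j_2}\hat\afunc/\partial\phi^{j_1}\partial\rrho^{j_2})$: by Leibniz one gets (a) $A_xB(x^2+y^2)^{-s/2k}$, keeping $s$ and the order $j_1+j_2$; (b) a term from differentiating the power, of the shape $-\tfrac{s}{k}xAB(x^2+y^2)^{-(s+2k)/2k}$, raising $s$ by $2k$; and (c) two terms from differentiating $B$ via the displayed chain rule, in which the new front factor is $A$ times one of $-y,\,x,\,x/k,\,y/k$ (all polynomials, hence smooth and $\afunc$-independent), $s$ is raised by $2k$ or by $2k-1$, and the $\phi,\rrho$-order inside $\fbij_k^{-1}$ is raised by exactly one. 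Thus after $a+b$ differentiations every summand has $j_1+j_2\le a+b$ and $s\le 2k(a+b)$, i.e.\ $s/2k\le a+b$, and the number of summands is at most $4^{a+b}$, depending only on $a,b$; this is exactly (i).

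For (ii) and (iii) I use Taylor's formula with integral remainder in the variable $\rrho$. Fix $F=\partial^{j_1+j_2}\hat\afunc/\partial\phi^{j_1}\partial\rrho^{j_2}$, which is flat on $\dHsp$; expanding in $\rrho$ about $\rrho=0$, all coefficients of order $<s$ vanish and $F(\phi,\rrho)=\frac{\rrho^{s}}{(s-1)!}\int_0^1(1-t)^{s-1}\bigl(\partial^{j_1+j_2+s}\hat\afunc/\partial\phi^{j_1}\partial\rrho^{j_2+s}\bigr)(\phi,t\rrho)\,dt$ for $s\ge1$ (the case $s=0$ being trivial). For (ii): this gives $|F|\le C_N\rrho^{N}$ for every $N$, uniformly in $\phi$ by $\ZZZ$-periodicity; composing with $\Pmap_k$, i.e.\ putting $\rrho=(x^2+y^2)^{1/2k}$, each summand of $D\afunc$ is $O\bigl((x^2+y^2)^{(N-s)/2k}\bigr)$ and so tends to $0$ at $\orig$, whence $D\afunc$ extends continuously by $0$. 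The elementary fact that a function continuous on $\RRR^2$, smooth on $\RRR^2\setminus\{\orig\}$, and whose first partials extend continuously to $\RRR^2$ is of class $C^1$, applied inductively in the order of differentiation, then yields $\afunc\in C^{\infty}(\RRR^2)$, and together with the decay of all $D\afunc$ this shows $\afunc$ is flat at $\orig$; hence $\fbij_k(\ARtwo)=\AZHR$ and $\fbij_k$ is a bijection. For (iii): after enlarging $K$ to a closed disk about $\orig$, the set $L$ becomes a rectangle $[0,2\pi]\times[0,R^{1/k}]$, stable under $(\phi,\rrho)\mapsto(\phi,t\rrho)$, so the remainder formula gives $|F(\phi,\rrho)|/\rrho^{s}\le\tfrac1{s!}\sup_{L}|\partial^{j_1+j_2+s}\hat\afunc/\partial\phi^{j_1}\partial\rrho^{j_2+s}|$; since $j_1+j_2\le a+b$ and $s\le 2k(a+b)$, this derivative has order $\le(2k+1)(a+b)\le(2k+1)r$, hence is bounded by $\|\hat\afunc\|^{(2k+1)r}_{L}$. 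Multiplying by the compactly bounded factors $A$, summing the finitely many (only $r$-dependent) terms and over all $|i|\le r$ gives $\|\afunc\|^{r}_{K}\le C\|\hat\afunc\|^{(2k+1)r}_{L}$ with $C$ independent of $\hat\afunc$; together with linearity of $\fbij_k^{-1}$ this is precisely $C^{(2k+1)r,r}_{W,W}$-continuity.

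The main obstacle is the removable-singularity passage inside (ii): the recursion of (i) together with the flatness of $\hat\afunc$ readily yields that $\afunc$ is smooth off $\orig$ with all derivatives decaying there, but upgrading this to genuine $C^\infty$-smoothness (and flatness) at $\orig$ requires the elementary lemma on continuous extension of partial derivatives, invoked step by step. Everything else is bookkeeping: the combinatorics of the recursion in (i) and the single Taylor-with-remainder estimate in (iii).
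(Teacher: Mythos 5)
Your proposal is correct and follows essentially the same route as the paper: part (i) via the inverted Jacobi matrix of $\Pmap_k$ and bookkeeping of the powers of $(x^2+y^2)$, part (ii) by extracting a factor $\rrho^{s}$ from the flat derivatives of $\hat\afunc$ (your Taylor-with-integral-remainder is just the Hadamard-lemma step the paper uses), and part (iii) by the resulting sup-norm estimate with derivative order at most $(2k+1)r$. The only difference is presentational — you make explicit the induction in (i) and the removable-singularity step upgrading continuity of all $D\afunc$ to smoothness and flatness at $\orig$, which the paper leaves implicit.
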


Before proving this lemma we establish some formulas.

\subsection{Formulas for $\Pmap_k^{-1}$ and its derivatives.}
Let $(x,y)\in\RRR^2$.
Then $x^2+y^2=\rrho^{2k}$.
For simplicity suppose that $x>0$, hence 
$$
\rrho=(x^2+y^2)^{\frac{1}{2k}}, \qquad 
\phi = \arctan(y/x) + 2\pi n,
$$
for a certain $n\in\ZZZ$.
Therefore
\begin{align*}
\phi'_{x} & = \frac{-y}{x^2+y^2}, &&
\phi'_{y}   = \frac{x}{x^2+y^2}, \\
\rrho'_{x} & = \frac{x}{k\, (x^2+y^2)^{1-\frac{1}{2k}}}, &&
\rrho'_{y} = \frac{y}{k\, (x^2+y^2)^{1-\frac{1}{2k}}}.
\end{align*}

Similarly, for every $a,b\geq 0$ there exist smooth functions 
$$\mu_i,\nu_i:\RRR^2\to\RRR, \qquad (i=1,\ldots,a+b),$$ such that
\begin{equation}\label{equ:deriv-for-Pinv}
\frac{\partial^{a+b}\phi}{\partial x^{a} \partial y^{b}} = 
\sum_{i=1}^{a+b} \frac{\mu_i}{(x^2+y^2)^{a+b}},
\qquad 
\frac{\partial^{a+b}\rrho}{\partial x^{a} \partial y^{b}} = 
\sum_{i=1}^{a+b} \frac{\nu_i}{(x^2+y^2)^{a+b-\frac{1}{2k}}}.
\end{equation}

These formulas do not depend on a particular choice of the expression of $\phi$ through $x$ and $y$.

\begin{proof}[Proof of Lemma~\ref{lm:estim-for-Dg}.]
(i)~First consider the derivative $\afunc'_x$.
Let $z=(x,y)\not=\orig$.
Then in a sufficiently small neighborhood $\anbh_{z}$ of $z$ we can define an inverse map $\Pmap_k^{-1}:\anbh_{z}\to\Hsp$ such that $\afunc=\hat\afunc\circ\Pmap_k^{-1}$.
Therefore
$$
\afunc'_x = (\hat\afunc'_{\phi}\circ \Pmap_k^{-1}) \cdot \phi'_x + (\hat\afunc'_{\rrho}\circ \Pmap_k^{-1}) \cdot \rrho'_x.
$$

Notice that every partial derivative of $\hat\afunc\in\AZHR$ belongs to $\AZHR$ as well, whence by~\eqref{equ:q-map-on-cont} this derivative determines a unique continuous function on $\anbh_{z}$.
Therefore we can write 
$$
\afunc'_x = \fbij_k^{-1}(\hat\afunc'_{\phi}) \cdot \phi'_x + \fbij_k^{-1}(\hat\afunc'_{\rrho}) \cdot \rrho'_x
 =
\frac{-y \cdot \fbij_k^{-1}(\hat\afunc'_{\phi})}{x^2+y^2} + 
\frac{ x \cdot \fbij_k^{-1}(\hat\afunc'_{\rrho})}{k(x^2+y^2)^{1-\frac{1}{2k}}}. 
$$
Thus we have obtained a desired presentation.
The proof for other partial derivatives of $\afunc$ is similar and we left it to the reader.

(ii)~Let us show the continuity of $D\afunc$.
Denote $$D^j\hat\alpha=\frac{\partial^{j}\hat\afunc}{\partial \phi^{j_1} \,\partial \rrho^{j_2}}.$$
Since $D^j\hat\alpha$ is flat on $\dHsp$, it follows that there exists a smooth function $\xi\in\AZHR$ such that $D^j\hat\alpha=\rrho^{s} \xi$.
Therefore 
$$
B = \fbij_k^{-1}(D^j\hat\alpha) = 
\fbij_k^{-1}(\rrho^{s}) \; \fbij_k^{-1}(\xi) = (x^2+y^2)^{s/2k} \; \fbij_k^{-1}(\xi),
$$
whence 
\begin{equation}\label{equ:reduce2}
\frac{AB}{(x^2+y^2)^{s/2k}} = A \; \fbij_k^{-1}(\xi)
\end{equation}
is continuous.
Hence $D\afunc$ is continuous as well.
Notice that $\xi(\phi,0)=0$.
Therefore $\fbij_k^{-1}(\xi_i)(\orig)=0$, whence $D\afunc(\orig) = 0$.

(iii)~Let $\afunc=\fbij_k^{-1}(\hat\afunc)$.
We have to estimate $\|\afunc\|^{r}_{K}$.
Notice that the subset $L\subset\Hsp$ defined by~\eqref{equ:def-L} is compact and $\Pmap(L)=K$.
Therefore  
\begin{equation}\label{equ:estim-on-0-norms}
\|\fbij_k^{-1}(\hat\afunc)\|^{0}_{K} \ = \ 
\|\afunc\|^0_{K} \ = \ \sup_{x\in K} |\afunc(x)| \ = \
\sup_{(\phi,\rrho)\in L} |\hat\afunc(\phi,\rrho)| \ = \ \|\hat\afunc\|^0_{L}.
\end{equation}
By (ii) and~\eqref{equ:reduce2} every partial derivative $D\afunc$ of $\afunc$ of order $r$ can be represented in the form
$$
D\afunc = \sum_i A_i \cdot \fbij_k^{-1}\left(\frac{D^{j_i}\hat\afunc}{\rrho^{s_i}}\right),
$$
where $A_i$ is smooth on all $\RRR^2$, $D^{j_i}\hat\afunc$ is a partial derivative of $\hat\afunc$ of order $j_i\leq r$, and $s_i\leq 2kr$.

Notice that for every $i$ there are constants $C_1,C_2,C_3>0$ that do not depend on $\hat\afunc$ and such that 
\begin{multline}\label{equ:estim-Aqinv}
\left\|\fbij_k^{-1}\left(\frac{D^{j_i}\hat\afunc}{\rrho^{s_i}}\right)\right\|^{0}_{K} 
 \overset{\text{\eqref{equ:estim-on-0-norms}}}{=\!=\!=\!=} 
\left\|\frac{ D^{j_i}\hat\afunc}{\rrho^{s_i}} \right\|^{0}_{L}  
\ \overset{\text{(Lemma~\ref{lm:Z-cont}})}{\leq} \ \\
 \ \leq \ C_1\, \|D^{j_i}\hat\afunc \|^{s_i}_{L} 
\ \overset{\text{(Lemma~\ref{lm:D-cont})}}{\leq} \
 C_2\, \|\hat\afunc \|^{s_i+j_i}_{L} \ 
\ \overset{\text{\eqref{equ:deriv-for-Pinv}}}{\leq} \
\ C_3 \, \|\hat\afunc \|^{(2k+1)r}_{L}.
\end{multline}

Hence there exists $C_4>0$ such that 
$$
\|D\afunc\|^{0}_{K} \ \leq  \ 
\sum_i \left\| A_i \cdot \fbij_k^{-1}\left(\frac{D^{j_i}\hat\afunc}{\rrho^{k_i}}\right) \right\|^{0}_{K} \ \leq \
C_4\; \|\hat\afunc \|^{(2k+1)r}_{L}.
$$

Therefore
$\|\afunc\|^{r}_{K} \leq C \|\hat\afunc\|^{(2k+1)r}_{L}$
for a certain $C>0$ that depends on $K$ and $r$ but $\hat\afunc$.
\end{proof}
Theorem~\ref{th:corresp-flat-func} is completed.

\section{Correspondence between smooth mappings that are $\infty$-close to the identity}\label{sect:corresp-flat-maps}
Let $\EZHH$ be the set of all smooth maps $$\hdifM=(\hdifM_1,\hdifM_2):\Hsp\to\Hsp,$$
satisfying the following conditions:

\begin{enumerate}
\item[(i)]
 $\hdifM$ is $\ZZZ$-equivariant, i.e. 
\begin{equation}\label{equ:h_Zequiv}
\hdifM_1(\phi+2\pi,\rrho)=\hdifM_1(\phi,\rrho) + 2\pi, \qquad
\hdifM_2(\phi+2\pi,\rrho)=\hdifM_2(\phi,\rrho).
\end{equation}

\item[(ii)] 
$\hdifM$ is fixed on $\dHsp$ and $\hdifM(\IHsp) \subset \IHsp$;

\item[(iii)]
$\difM$ is $\infty$-close to $\id_{\Hsp}$ on $\dHsp$, i.e.\! the following functions
$$
\hdifM_1(\phi,\rrho)-\phi, \qquad
\hdifM_2(\phi,\rrho)-\rrho
$$
are flat on $\dHsp$.
\end{enumerate}

Let also $\ERtwo$ be the set of smooth mappings $\difM:\RRR^2\to\RRR^2$ such that $\difM^{-1}(\orig)=\orig$ and $\difM$ is $\infty$-close to $\id_{\RRR^2}$ at $\orig$.

\begin{lem}\label{lm:hhZ__abZ}
Let $\hdifM=(\hdifM_1,\hdifM_2):\Hsp\to\Hsp$ be a mapping and 
$$
\hat\afunc(\phi,\rrho)=\hdifM_1(\phi,\rrho)-\phi, \qquad  \hat\bfunc(\phi,\rrho)=\hdifM_2(\phi,\rrho)-\rrho.
$$
Then $\hdifM$ is $\ZZZ$-equivariant if and only if the functions $\hat\afunc$ and $\hat\bfunc$ are $\ZZZ$-invariant.
\end{lem}
\begin{proof}
Notice that 
\begin{align*}
\hat\afunc(\phi+2\pi, \rrho) - \hat\afunc(\phi, \rrho) & = 
\hdifM_1(\phi+2\pi,\rrho)-\phi-2\pi - (\hdifM_1(\phi,\rrho)-\phi)  \\ & =
\hdifM_1(\phi+2\pi,\rrho)- \hdifM_1(\phi,\rrho)- 2\pi, \\
\hat\bfunc(\phi+2\pi, \rrho)-\hat\bfunc(\phi, \rrho) & =
\hdifM_2(\phi+2\pi,\rrho)-\rrho - (\hdifM_2(\phi,\rrho)-\rrho)  \\ & =
\hdifM_2(\phi+2\pi,\rrho)- \hdifM_2(\phi,\rrho).
\end{align*}
These identities together with~\eqref{equ:h_Zequiv} imply our statement.
\end{proof}

\begin{thm}\label{th:corresp-flat-maps}
The mapping $\Pmap_k$ yields a $C^{r,r}_{W,W}$-continuous bijection 
$$\mbij_k:\ERtwo\to\EZHH$$
such that its inverse $\mbij^{-1}_k$ is $C^{(2k+1)r,r}_{W,W}$-continuous for every $r\geq0$.
\end{thm}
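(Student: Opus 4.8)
The plan is to construct $\mbij_k$ and its inverse explicitly using the covering map $\Pmap_k$, and then to reduce all the continuity estimates to Theorem~\ref{th:corresp-flat-func} and Lemma~\ref{lm:estim-for-Dg} via the substitution of Lemma~\ref{lm:hhZ__abZ}. First I would observe that a smooth map $\difM:\RRR^2\to\RRR^2$ with $\difM^{-1}(\orig)=\orig$ and $\difM$ being $\infty$-close to $\id_{\RRR^2}$ at $\orig$ admits a unique continuous lift $\hdifM:\Hsp\to\Hsp$ with $\Pmap_k\circ\hdifM=\difM\circ\Pmap_k$ and $\hdifM|_{\dHsp}=\id_{\dHsp}$; indeed, on $\IHsp$ this is the standard lifting of $\difM\circ\Pmap_k:\IHsp\to\RRR^2\setminus\{\orig\}$ through the $\ZZZ$-covering $\Pmap_k:\IHsp\to\RRR^2\setminus\{\orig\}$ (using that $\difM$ maps a punctured neighborhood of $\orig$ into $\RRR^2\setminus\{\orig\}$), the $\ZZZ$-equivariance~\eqref{equ:h_Zequiv} being the deck-transformation compatibility, and the extension to $\dHsp$ by the identity follows because $\difM$ is $\infty$-close to the identity at $\orig$. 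Set $\mbij_k(\difM)=\hdifM$.

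The key step is to identify $\mbij_k$ coordinate-wise with $\fbij_k$ acting on the ``difference'' functions. Writing $\difM=\id+(\afunc_1,\afunc_2)$ with each $\afunc_i\in\ARtwo$ (this is exactly what $\infty$-closeness at $\orig$ means) and $\hdifM=\id+(\hat\afunc,\hat\bfunc)$, I would first handle the radial coordinate: since $\difM_1^2+\difM_2^2=\hdifM_2^{\,2k}$ and $\difM$ is $\infty$-close to $\id$ at $\orig$, the function $\hdifM_2^{\,2k}=\difM_1^2+\difM_2^2$ is a smooth function of $(x,y)$ flat at $\orig$ up to the additive term $x^2+y^2=\rrho^{2k}$, so $\hdifM_2^{\,2k}-\rrho^{2k}=\fbij_k(\difM_1^2+\difM_2^2-(x^2+y^2))$ lies in $\AZHR$; dividing out $\rrho^{2k}$ via Lemma~\ref{lm:Z-cont} (applied iteratively, or its analogue for $\rrho^{2k}$) and using Hadamard's lemma in the form~\eqref{equ:Hadamard-fxy} one extracts that $\hat\bfunc=\hdifM_2-\rrho\in\AZHR$ together with the estimate $\|\hat\bfunc\|^r_L\le C\|\difM-\id\|^{(2k+1)r+c}_{K}$ for a constant $c$ absorbing the finitely many derivatives lost. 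For the angular coordinate, on the region $x>0$ one has $\hdifM_1=\arctan(\difM_2/\difM_1)+2\pi n$ locally, and differentiating shows every partial derivative of $\hat\afunc=\hdifM_1-\phi$ is a finite sum of terms $A\cdot(\text{partial derivative of }\difM-\id)/(x^2+y^2)^{s/2}$ with $s\le 2r$ and $A$ smooth and independent of $\difM$ — precisely the structure of Lemma~\ref{lm:estim-for-Dg}(i) — so $\hat\afunc\in\AZHR$ with the same type of tame estimate. By Lemma~\ref{lm:hhZ__abZ} the $\ZZZ$-invariance of $\hat\afunc,\hat\bfunc$ gives $\ZZZ$-equivariance of $\hdifM$, and flatness on $\dHsp$ gives condition~(iii); condition~(ii) follows from $\difM^{-1}(\orig)=\orig$ together with the lift being the identity on $\dHsp$. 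Conversely, given $\hdifM\in\EZHH$, $\ZZZ$-equivariance lets $\difM:=\Pmap_k\circ\hdifM\circ\Pmap_k^{-1}$ descend to a well-defined continuous map on $\RRR^2\setminus\{\orig\}$, and flatness of $\hdifM_i-\id$ on $\dHsp$ plus Lemma~\ref{lm:estim-for-Dg}(ii) (applied to $\hdifM_1-\phi$ and to $\hdifM_2^{\,2k}-\rrho^{2k}$) shows $\difM$ extends to a smooth map $\infty$-close to $\id_{\RRR^2}$ at $\orig$ with $\difM^{-1}(\orig)=\orig$; this gives $\mbij_k^{-1}$.

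For the continuity statements: $\mbij_k$ is $C^{r,r}_{W,W}$-continuous because $\hdifM=(\phi+\hat\afunc,\rrho+\hat\bfunc)$ and the components $\hat\afunc,\hat\bfunc$ depend on $\difM$ through $\Pmap_k$ and finitely many smooth algebraic operations — more directly, $\hdifM_i\circ\Pmap_k^{-1}$ composed with $\Pmap_k$ recovers $\difM_i$, so $C^r$-control of $\hdifM-\id$ on compacta of $\Hsp$ follows from $C^r$-control of $\difM-\id$ on compacta of $\RRR^2$ via the smoothness of $\Pmap_k$, exactly as in the proof that $\fbij_k$ is $C^{r,r}_{W,W}$-continuous in Theorem~\ref{th:corresp-flat-func}. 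The inverse $\mbij_k^{-1}$ being $C^{(2k+1)r,r}_{W,W}$-continuous is the substantive part and follows by assembling the two coordinate estimates above, which are in turn direct applications of Lemma~\ref{lm:estim-for-Dg}(iii). I expect the main obstacle to be the radial coordinate bookkeeping: extracting $\hdifM_2-\rrho\in\AZHR$ from $\hdifM_2^{\,2k}-\rrho^{2k}\in\AZHR$ requires factoring $\hdifM_2^{\,2k}-\rrho^{2k}=(\hdifM_2-\rrho)\cdot(\hdifM_2^{\,2k-1}+\cdots+\rrho^{2k-1})$ and inverting the second factor near $\dHsp$ (where it equals $2k\rrho^{2k-1}+\text{flat}$), which is where one must be careful that the division stays within smooth $\ZZZ$-invariant functions flat on $\dHsp$ and that the derivative loss remains bounded by a linear function of $r$; the angular coordinate, by contrast, is essentially a verbatim repetition of Lemma~\ref{lm:estim-for-Dg} applied to $\arctan(\difM_2/\difM_1)$ in place of $\hat\afunc\circ\Pmap_k^{-1}$.
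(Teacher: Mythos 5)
Your overall architecture matches the paper's: lift through the covering $\Pmap_k$, reduce everything to the difference functions $\hat\afunc=\hdifM_1-\phi$, $\hat\bfunc=\hdifM_2-\rrho$ via Lemma~\ref{lm:hhZ__abZ}, and push the tame estimates for the inverse onto Theorem~\ref{th:corresp-flat-func} and Lemma~\ref{lm:estim-for-Dg}. But there are two concrete soft spots. First, your construction of the forward map does not deliver the claimed $C^{r,r}_{W,W}$-continuity of $\mbij_k$. Both of your coordinate extractions — dividing $\hdifM_2^{2k}-\rrho^{2k}$ by $\rrho^{2k}$ (or by the cofactor $\hdifM_2^{2k-1}+\cdots+\rrho^{2k-1}$, which also vanishes on $\dHsp$) and writing $\hdifM_1$ as $\arctan(\difM_2/\difM_1)$, whose flat perturbation of $\tan\phi$ involves $\gamma\circ\Pmap_k/\rrho^k$ — cost $O(k)$ derivatives by Lemma~\ref{lm:Z-cont}; indeed your own estimate $\|\hat\bfunc\|^r_L\le C\|\difM-\id\|^{(2k+1)r+c}_K$ is the loss budget appropriate to the \emph{inverse} direction, and your later assertion that $\mbij_k$ is nevertheless $C^{r,r}$ ``exactly as for $\fbij_k$'' contradicts it. The device that makes the forward map lossless is to first apply the Hadamard lemma in $\RRR^2$, writing $\difM_1=x+xa_1+yb_1$, $\difM_2=y+xa_2+yb_2$ with $a_i,b_i\in\ARtwo$ as in~\eqref{equ:h1h2}: then in $(\difM_1\circ\Pmap_k)^2+(\difM_2\circ\Pmap_k)^2=\rrho^{2k}(1+\omega)$ the division by $\rrho^{2k}$ is absorbed into bounded factors $\cos^2\phi$, $\sin\phi\cos\phi$, $\sin^2\phi$, so $\omega$ is flat with no division by $\rrho$ at all, and similarly for the angular relation $\hdifM_2^{2k}\sin2\hdifM_1=\rrho^{2k}(\sin2\phi+\xi)$. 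Your sketch never performs this decomposition.

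Second, in the inverse direction, knowing that $\fbij_k^{-1}(\hdifM_1-\phi)$ and $\fbij_k^{-1}(\hdifM_2^{2k}-\rrho^{2k})=\difM_1^2+\difM_2^2-(x^2+y^2)$ are smooth and flat at $\orig$ does not yet show that $\difM_1$ and $\difM_2$ are individually smooth: reassembling them requires $\difM_1\circ\Pmap_k=\hdifM_2^k\cos\hdifM_1$, and $\cos(\phi+\hat\afunc)$ contains the factor $\cos\phi$, which is not of the form $\fbij_k(\text{smooth at }\orig)$. The step you are missing is the explicit expansion~\eqref{equ:gP_dP}: apply~\eqref{equ:Hadamard-fxy} to write $\cos(\phi+\hat\afunc)=\cos\phi+\hat\afunc\mu$, $\sin(\phi+\hat\afunc)=\sin\phi+\hat\afunc\nu$ and $(\rrho+\hat\bfunc)^k=\rrho^k+\hat\bfunc_1$, so that $\difM_1\circ\Pmap_k-\rrho^k\cos\phi=\hat\bfunc_1\cos\phi+(\rrho^k+\hat\bfunc_1)\hat\afunc\mu$ lands visibly in $\AZHR$, and only then invoke Theorem~\ref{th:corresp-flat-func} to conclude $\difM-\id\in\ARtwo\times\ARtwo$ with the $C^{(2k+1)r,r}$ bound. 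This is a short computation, but it is the crux of the inverse direction and cannot be replaced by applying Lemma~\ref{lm:estim-for-Dg} to $\hdifM_2^{2k}-\rrho^{2k}$ alone.
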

\proof
Let $\EZHdH$ be the set of all continuous, $\ZZZ$-equivariant mappings $\hdifM:\Hsp\to\Hsp$ that are fixed on $\dHsp$ and $\hdifM(\IHsp)\subset\IHsp$.

Let also $\ERtwoz$ be the set of all continuous maps $\difM:\RRR^2\to\RRR^2$ such that $\difM^{-1}(\orig)=\orig$.

Then every $\hdifM\in\EZHdH$ yields a unique $\difM\in\ERtwoz$ such that the following diagram is commutative:
$$
\begin{CD}
\Hsp @>{\hdifM}>> \Hsp \\
@V{\Pmap_k}VV @VV{\Pmap_k}V \\
\RRR^2 @>{\difM}>> \RRR^2
\end{CD}
$$
i.e. $\difM\circ\Pmap_k=\Pmap_k\circ\hdifM$.
In the coordinate form this means that 
\begin{equation}\label{equ:hP_Phh}
\begin{array}{rcccl}
\difM_1(\rrho^k\cos\phi,\rrho^k\sin\phi) & = & \hdifM_2(\phi,\rrho)^k \cdot \cos\hdifM_1(\phi,\rrho) 
\\
\difM_2(\rrho^k\cos\phi,\rrho^k\sin\phi) & = & \hdifM_2(\phi,\rrho)^k \cdot \sin\hdifM_1(\phi,\rrho). 
\end{array}
\end{equation} 
For such a pair $\difM$ and $\hdifM$ we will use the following notations:
\begin{align}
\hat\afunc(\phi,\rrho) =\hdifM_1(\phi,\rrho) - \phi, \qquad
\hat\bfunc(\phi,\rrho) =\hdifM_2(\phi,\rrho) - \rrho, \label{equ:ha_hb} \\
\gamma(x,y) =\difM_1(x,y) - x, \qquad
\delta(x,y) =\difM_2(x,y) - y. \label{equ:g_d}
\end{align}

Thus the correspondence $\hdifM\mapsto\difM$ is a well-defined mapping 
$$\mbij'_k:\EZHdH\to\ERtwoz.$$

Our aim is to prove that $\mbij'_k$ yields a bijection $$\mbij_{k}^{-1}:\EZHH\to\ERtwo.$$

First let us show that the image of $\mbij'_k$ includes $\ERtwo$.
Indeed, let $\difM\in\ERtwo$.
Since $\difM$ is $C^1$ (actually $C^{\infty}$) and $1$-close to the identity at $\orig$ (actually $\infty$-close), we have that the tangent map
$$T_\orig\difM:T_\orig\RRR^2\to T_\orig\RRR^2$$ is the identity.
Therefore $\difM$ induces a unique mapping $\hdifM:\Hsp\to\Hsp$ fixed on $\dHsp$.
Moreover, since $\difM^{-1}(\orig)=\orig$, we obtain that $\hdifM(\IHsp)=\IHsp$, whence  $\hdifM\in\EZHdH$, and thus $\mbij'_k(\hdifM)=\difM$.

Also notice that a uniqueness of such $\hdifM$ implies that we have a well-defined map 
$$
\mbij_k:\ERtwo\to\EZHdH
$$
inverse to $\mbij'_k$.

It remains to prove the following lemma:

\begin{lem}
$\mbij_k(\ERtwo) = \EZHH$.
Moreover, for every $r\geq0$ the restriction map 
$$\mbij_k: \ERtwo \to \EZHH$$ is $C^{r,r}_{W,W}$-continuous, while its inverse
$$\mbij_k^{-1}:\EZHH\to\ERtwo$$ is $C^{(2k+1)r,r}_{W,W}$-continuous.
\end{lem}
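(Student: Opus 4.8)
The statement has two parts: (a) the set-theoretic identity $\mbij_k(\ERtwo) = \EZHH$, and (b) the continuity estimates in both directions. I would treat (a) first. We already know from the construction of $\mbij'_k$ that $\mbij_k$ is a bijection between $\ERtwo$ and a subset of $\EZHdH$, and that the image lands among the $\ZZZ$-equivariant maps fixed on $\dHsp$ preserving $\IHsp$. So the content of (a) is twofold: first, if $\difM\in\ERtwo$, then $\hdifM=\mbij_k(\difM)$ is actually \emph{smooth} on all of $\Hsp$ (a priori it is only smooth on $\IHsp$ and continuous up to $\dHsp$), and the difference functions $\hat\afunc,\hat\bfunc$ from~\eqref{equ:ha_hb} are flat on $\dHsp$; second, conversely, every $\hdifM\in\EZHH$ comes from some $\difM\in\ERtwo$.

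\emph{Smoothness and flatness of $\hdifM$.} The key observation is that~\eqref{equ:hP_Phh} expresses $\difM\circ\Pmap_k$ in terms of $\hdifM$. I would invert this: near a point of $\dHsp$ one has $\rrho^{2k}=x^2+y^2$ and the components of $\hdifM$ can be read off from $\difM$ composed with $\Pmap_k$ by the same kind of formulas used for $\Pmap_k^{-1}$ in Section~\ref{sect:corresp-flat-func}. Precisely, since $\difM$ is $\infty$-close to the identity at $\orig$, write (Hadamard, as in~\eqref{equ:Hadamard-fxy}) $\difM_i(x,y)=x_i+\gamma_i(x,y)$ with $\gamma_i$ flat at $\orig$; then $\hdifM_2(\phi,\rrho)^{2k} = \difM_1(\Pmap_k)^2+\difM_2(\Pmap_k)^2 = \rrho^{2k}+(\text{flat terms in }\rrho)$, and $\hdifM_1$ is recovered from the ratio. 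The substitution $\afunc\mapsto\afunc\circ\Pmap_k$ carries functions flat at $\orig$ to functions flat on $\dHsp$ (this is part of Theorem~\ref{th:corresp-flat-func}, or rather its easy direction), which gives that $\hat\afunc,\hat\bfunc$ extend smoothly and flatly across $\dHsp$ — here one has to divide by powers of $\rrho$, controlled because the numerators are flat, exactly the mechanism of Lemma~\ref{lm:estim-for-Dg}. The converse direction is symmetric: given $\hdifM\in\EZHH$, define $\difM$ by~\eqref{equ:hP_Phh}; $\ZZZ$-equivariance of $\hdifM$ makes $\difM$ well-defined on $\RRR^2\setminus\{\orig\}$ and the flatness of $\hdifM_1-\phi$, $\hdifM_2-\rrho$ forces $\difM$ to extend smoothly and $\infty$-close to the identity across $\orig$, again by the flat-function correspondence of Theorem~\ref{th:corresp-flat-func} applied componentwise to $\gamma,\delta$ in~\eqref{equ:g_d}.

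\emph{Continuity.} For the $C^{r,r}_{W,W}$-continuity of $\mbij_k$ itself: by~\eqref{equ:hP_Phh} the components of $\difM$ are obtained from those of $\hdifM$ by composition with the smooth map $\Pmap_k$ together with the smooth operations $t\mapsto t^k$, $\cos$, $\sin$ — each of these is $C^{r,r}_{W,W}$-continuous (composition with a fixed smooth map does not raise the order, by the chain rule and Lemma~\ref{lm:D-cont}-type bounds). For the inverse $\mbij_k^{-1}:\EZHH\to\ERtwo$, one inverts $\Pmap_k$ and this is where the loss of $2k+1$ derivatives enters, exactly as in Lemma~\ref{lm:estim-for-Dg}: each derivative of $\difM$ of order $\leq r$ is a finite sum of terms $A\cdot\fbij_k^{-1}(D^j\hat{(\cdot)}/\rrho^s)$ with $j\leq r$, $s\leq 2kr$, and estimating the $0$-norm of such a term over a compact $K$ against $\|\hdifM\|^{(2k+1)r}_{L}$ reuses the chain Lemma~\ref{lm:Z-cont} (to absorb $1/\rrho^s$, costing $s$ derivatives) then Lemma~\ref{lm:D-cont} (to absorb $D^j$, costing $j$), then~\eqref{equ:deriv-for-Pinv}, so $s+j\leq 2kr+r=(2k+1)r$.

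\textbf{Main obstacle.} The routine calculations (chain rule bookkeeping, the compactness of $L$, the norm chains) are just a rerun of Section~\ref{sect:corresp-flat-func}. The one genuinely delicate point is establishing that $\mbij_k(\difM)$ is \emph{smooth up to and including $\dHsp$} — equivalently, that the flatness of $\difM-\id$ at $\orig$ is exactly what is needed to kill the apparent singularities produced by inverting $\Pmap_k$ (dividing by powers of $\rrho$ near $\dHsp$). This is the place where property \AST\ is \emph{not} used — it is purely a statement about the polar substitution — but it must be done carefully, because without the flatness hypothesis the extension fails (the same way $(x^2+y^2)^{1/2k}$ fails to be smooth at $\orig$). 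Once that regularity is in hand, both the bijectivity and the continuity estimates follow by the same machinery already developed, and the lemma — hence Theorem~\ref{th:corresp-flat-maps} — is complete.
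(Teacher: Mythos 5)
Your overall route is the paper's: treat $\hat\afunc=\hdifM_1-\phi$ and $\hat\bfunc=\hdifM_2-\rrho$ via the flat-function correspondence of Theorem~\ref{th:corresp-flat-func}, use a Hadamard decomposition of $\difM$ at $\orig$ to factor out $\rrho^{k}$, and obtain the $(2k+1)r$ loss for $\mbij_k^{-1}$ by routing the estimate through $\fbij_k^{-1}$. However, your continuity paragraph contains a genuine error: the argument you give for the $C^{r,r}_{W,W}$-continuity of $\mbij_k$ proves the wrong statement. Reading~\eqref{equ:hP_Phh} as ``the components of $\difM$ are obtained from those of $\hdifM$ by composition with $\Pmap_k$ and smooth operations'' describes the passage $\hdifM\mapsto\difM\circ\Pmap_k$, which is a step in the analysis of $\mbij_k^{-1}$, not of $\mbij_k$ (and it only yields $\difM\circ\Pmap_k$, not $\difM$; recovering $\difM$ from $\difM\circ\Pmap_k$ is exactly where $\fbij_k^{-1}$ and the $(2k+1)r$ loss enter). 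For $\mbij_k$ itself one must go the other way: write $\difM_i=x_i+x a_i+y b_i$ with $a_i,b_i\in\ARtwo$ as in~\eqref{equ:h1h2}, so that $\difM_i\circ\Pmap_k=\rrho^{k}(\cdot)$ with the factor $\rrho^{k}$ splitting off \emph{exactly}; then $\hdifM_2$ and $\hdifM_1$ are obtained from $a_i\circ\Pmap_k$, $b_i\circ\Pmap_k$ by non-singular smooth operations ($2k$-th roots of quantities near $1$, an $\arcsin$ handled by~\eqref{equ:Hadamard-ffinvxy}), and composition with the fixed smooth map $\Pmap_k$ costs no derivatives.

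Relatedly, your claim that producing $\hdifM$ from $\difM$ requires ``dividing by powers of $\rrho$, \ldots exactly the mechanism of Lemma~\ref{lm:estim-for-Dg}'' cannot stand as written. Hadamard division of a flat function by $\rrho^{s}$ is only $C^{r+s,r}_{W,W}$-continuous (Lemma~\ref{lm:Z-cont}), so if that mechanism were genuinely needed in the forward direction, $\mbij_k$ would not be $C^{r,r}_{W,W}$-continuous and the asymmetry asserted by the lemma (no loss one way, a $(2k+1)r$ loss the other) would collapse. The whole point of the Cartesian decomposition~\eqref{equ:h1h2} is to make the power of $\rrho$ factor out identically, so that in the forward direction one only ever divides by nonvanishing quantities. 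With these two corrections your plan coincides with the paper's proof; your treatment of the converse inclusion and of the $(2k+1)r$ estimate for $\mbij_k^{-1}$ matches the paper's.
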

\begin{proof}
Let $\difM\in\ERtwo$ and $\hdifM=\mbij_k(\difM)$.
It suffices to prove that $\hdifM$ is smooth and $\infty$-close to $\id_{\Hsp}$ on $\dHsp$ in a neighborhood of $(0,0)\in\Hsp$.

Since $\difM(\orig)=\orig$ and $\difM$ is $\infty$-close to $\id_{\RRR^2}$ at $\orig$ we have that 
\begin{equation}\label{equ:h1h2}
\difM_1(x,y) = x + x a_1 + y b_1,
\qquad
\difM_2(x,y) = y + x a_2 + y b_2,
\end{equation}
where $a_1,a_2,b_1,b_2\in\ARtwo$.

Then it follows from~\eqref{equ:hP_Phh} and~\eqref{equ:h1h2} that
$$
(\difM_1\circ\Pmap_k)^2 + (\difM_2\circ\Pmap_k)^2 =  \hdifM_2^2 = \rrho^{2k} \cdot (1 + \omega(\phi,\rrho)),
$$
$$
2 \cdot (\difM_1\circ\Pmap_k) \cdot (\difM_2\circ\Pmap_k) =
\hdifM_2^{2k} \cdot \sin 2\hdifM_1=\rrho^{2k} \cdot(\sin2\phi+ \xi(\phi,\rrho))
$$
where $\omega,\xi:\Hsp\to\RRR$ are smooth functions flat on $\dHsp$.
Hence
$$
\sin 2\hdifM_1 = \frac{\sin2\phi+\xi}{1+\omega}=
(\sin2\phi+\xi)(1-\omega+\omega^2-\cdots) = 
\sin2\phi+ \psi,
$$ 
where $\psi$ is smooth in a neighborhood of $(0,0)\in\Hsp$ and flat on $\dHsp$.
Therefore by~\eqref{equ:Hadamard-ffinvxy}
$$
\hdifM_1 = \frac{1}{2}\; \arcsin(\sin2\phi+ \psi) 
\overset{\text{\eqref{equ:Hadamard-ffinvxy}}}{=\!=\!=\!=} \phi + \psi\cdot \tau(\phi,\rrho),
$$
where $\tau$ is smooth in a neighborhood of $(0,0)\in\Hsp$.
Hence $\hdifM_1(\phi,\rrho)-\phi$ is smooth in a neighborhood of $(0,0)\in\Hsp$ and flat on $\dHsp$.

It remains to prove a smoothness of $\hdifM_2$ at every point $(\phi_0,0)$.
Let $A=\cos\phi_0$, $B=\sin\phi_0$.
Then it follows from~\eqref{equ:hP_Phh} and~\eqref{equ:h1h2} that 
\begin{multline*}
A\cdot\difM_1\circ\Pmap_k + B\cdot \difM_1\circ\Pmap_k \  \overset{\eqref{equ:hP_Phh}}{=\!=\!=\!} \
\hdifM_2^k\cdot(A\cos\hdifM_1+B\sin\hdifM_1)\   = \\ = \  \hdifM_2^k \cos(\hdifM_1-\phi_0).
\end{multline*} 
\begin{multline*}
A\cdot\difM_1\circ\Pmap_k + B\cdot \difM_1\circ\Pmap_k \  \overset{\eqref{equ:h1h2}}{=\!=\!=\!}  \
\rrho^k(A\cos\phi+B\sin\phi + c)\ =   \\ =  \ \rrho^k( \cos(\phi-\phi_0) + c),
\end{multline*} 

where $c\in\AZHR$.
Hence 
\begin{equation}\label{equ:hdifM_2}
\hdifM_2(\phi,\rrho)  = \rrho \cdot \underbrace{ \sqrt[k]{\frac{\cos(\phi-\phi_0) + c}{\cos(\hdifM_1-\phi_0)}} }_{\eta} = \rrho \cdot \eta(\phi,\rrho)
\end{equation}
Since $\hdifM_1$ is smooth and $\hdifM_1-\phi$ is flat on $\dHsp$, it follows that in a neighborhood of $(\phi_0,0)$ the function $\eta$ is smooth and $\eta-1$ is flat.
Hence 
$$ 
\hdifM_2 = \rrho + \hat\bfunc,
$$
where $\bfunc\in\AZHR$.
It also follows that $\mbij_k$ is $C^{r,r}_{W,W}$-continuous.

\medskip 
Consider now the map $\mbij_k^{-1}$.
Let $\hdifM=(\hdifM_1,\hdifM_2)\in\EZHH$ and 
$$\difM=\mbij_k^{-1}(\hdifM)=(\difM_1,\difM_2) \ \in \ \ERtwoz.$$

By assumption $\hat\afunc$ and $\hat\bfunc$ are flat on $\dHsp$ and by Lemma~\ref{lm:hhZ__abZ} they are $\ZZZ$-invariant, whence $\hat\afunc,\hat\bfunc\in\AZHR$.
We have to show that $\gamma$ and $\delta$ are smooth and flat at $\orig\in\RRR^2$.
Due to Theorem~\ref{th:corresp-flat-func} it suffices to establish that $\gamma\circ\Pmap_k$ and $\delta\circ\Pmap_k$ belong to $\AZHR$.

By~\eqref{equ:Hadamard-fxy} there are smooth functions $\mu,\nu:\Hsp\to\RRR$ such that 
\begin{align*}
& 
\cos\hdifM_1 = \cos(\phi + \hat\afunc) =
\cos\phi + \hat\afunc \cdot \mu(\phi,\hat\afunc), \\
& 
\sin\hdifM_1 = \sin(\phi + \hat\afunc) =
\sin\phi + \hat\afunc \cdot \nu(\phi,\hat\afunc).
\end{align*}
Evidently, $\mu$ and $\nu$ are $\ZZZ$-invariant.
Also notice that $$\hdifM_2^k = (\rrho+\hat\bfunc)^{k} = \rrho^{k} + \hat\bfunc_1,$$ for some $\hat\bfunc_1\in\AZHR$.
Hence
\begin{equation}\label{equ:gP_dP}
\begin{array}{rl}
 \gamma\circ\Pmap_k(\phi,\rrho) & = 
(\rrho^k+\hat\bfunc_1) (\cos\phi + \hat\afunc \cdot \mu(\phi,\hat\afunc))- \rrho^k\cos\phi =  \\ & =
\hat\bfunc_1 \cdot \cos\phi + (\rrho^k+\hat\bfunc_1)\cdot \hat\afunc \cdot \mu(\phi,\hat\afunc), \\
\delta\circ\Pmap_k(\phi,\rrho) & = 
\hat\bfunc_1 \cdot \sin\phi + (\rrho^k+\hat\bfunc_1)\cdot \hat\afunc \cdot \nu(\phi,\hat\afunc).
\end{array}
\end{equation}

Since $\hat\afunc,\hat\bfunc\in\AZHR$, we see that $\gamma\circ\Pmap_k, \delta\circ\Pmap_k \in\AZHR$ as well.

It remains to note that the mapping $\mbij_k^{-1}$ coincides with the following sequence of correspondences: 
$$
\hdifM
  \;\overset{\eqref{equ:ha_hb}}{\longmapsto}\; 
(\hat\afunc,\hat\bfunc)
  \;\overset{\eqref{equ:gP_dP}}{\longmapsto}\; 
(\gamma\circ \Pmap,\delta\circ\Pmap) 
  \;\overset{\fbij_k}{\mapsto}\;
(\gamma,\delta) 
  \;\overset{\eqref{equ:ha_hb}}{\longmapsto}\; 
\difM,
$$
in which for every $r\geq0$ the first and second arrows are $C^{r,r}_{W,W}$-conti\-nuous and by Theorem~\ref{th:corresp-flat-func} the third one is $C^{(2k+1)r,r}_{W,W}$-conti\-nuous.
Hence $\mbij_k^{-1}$ is $C^{(2k+1)r,r}_{W,W}$-continuous for every $r\geq 0$.
\end{proof}

Theorem~\ref{th:corresp-flat-maps} is completed.

\section{Proof of Proposition~\ref{pr:inf-id-shift-func}.}\label{sect:proof_of_shift2}
Let $\BFld$ be a smooth vector field, defined in a neighborhood $\nbh$ of the origin $\orig\in\RRR^2$.
Suppose that $\BFld$ has property \AST\ at $\orig$.
Therefore we can assume that $\BFld=\eta\HFld$, where $\eta:\RRR^2\to\RRR\setminus\{0\}$ is everywhere non-zero smooth function and $\HFld=(-\BFunc'_{y},\BFunc'_{x})$ is a Hamiltonian vector field of a certain homogeneous polynomial $\BFunc:\RRR^2\to\RRR$ of degree $\dg+1\geq2$ having no multiple factors.

Denote by $\BFlow$ the corresponding local flow of $\BFld$.

For every $\difM\in\EndBVi{\infty}$ we have to find a smooth function $$\afunc:\nbh\to\RRR$$ which is flat at $\orig$ and such that 
$$
\difM(z) = \BFlow(z,\afunc(z)).
$$

Let $\Pmap:\Hsp\to\RRR^2$ be the map defining polar coordinates, i.e.\! $$\Pmap(\phi,\rrho)=(\rrho\cos\phi,\rrho\sin\phi).$$
Thus $\Pmap=\Pmap_1$ in the notation of Section~\ref{sect:polar-coordinates}.

Set $\anbh=\Pmap^{-1}(\nbh)$.

Let $\Flat(\nbh,\orig)$ be the space of smooth functions $\nbh\to\RRR$ which are flat at $\orig$, and $\Flat_{\ZZZ}(\anbh,\dHsp)$ be the space of smooth $\ZZZ$-invariant functions $\anbh\to\RRR$ which are flat on $\dHsp$.

Denote by $\FlatMap(\nbh,\RRR^2,\orig)$ the space of smooth maps $\difM:\nbh\to\RRR^2$ such that $\difM^{-1}(\orig)=\orig$ and $\difM$ is $\infty$-close to $\id_{\nbh}$ at $\orig$.
Finally, let $\FlatMap_{\ZZZ}(\anbh,\Hsp,\dHsp)$ be the space of smooth $\ZZZ$-equivariant mappings $\hdifM:\anbh\to\Hsp$ such that $\hdifM^{-1}(\dHsp)=\dHsp$ and $\hdifM$ is $\infty$-close to $\id_{\anbh}$ at every points of $\dHsp$.

Then it follows from Theorems~\ref{th:corresp-flat-func} and~\ref{th:corresp-flat-maps} that the mapping $\Pmap$ yields the following bijections $\fbij_1$ and $\mbij_1$ which for simplicity we denote by $\fbij$ and $\mbij$ respectively: 
$$
\fbij: \Flat(\nbh,\orig) \to \Flat_{\ZZZ}(\anbh,\dHsp),
$$
$$
\mbij: \FlatMap(\nbh,\RRR^2,\orig) \to \FlatMap_{\ZZZ}(\anbh,\Hsp,\dHsp).
$$

Let $\AFld$ be the lifting of the vector field $\BFld$ from $\nbh$ to $\anbh$ via $\Pmap$.
Denote by $\EndAUDHi{\infty}$ the subset of $\EndAU$ consisting of mappings that are $\infty$-close to $\id_{\Hsp}$ on $\dHsp$.
Moreover, let $\EndZAUDHi{\infty}$ be the subset of $\EndAUDHi{\infty}$ consisting of $\ZZZ$-equivariant maps.
Then we have the following inclusions:
$$ 
\begin{CD}
\FlatMap(\nbh,\RRR^2,\orig)\quad & \supset & \quad \EndBVi{\infty}   \\
  @V{\mbij}VV \\
\FlatMap_{\ZZZ}(\anbh,\Hsp,\dHsp) \quad & \supset & \quad \EndZAUDHi{\infty}.
\end{CD}
$$
\begin{lem}\label{lm:reduct-G-to-F}
$\mbij\bigl( \; \EndBVi{\infty} \; \bigr) = \EndZAUDHi{\infty}$.
\end{lem}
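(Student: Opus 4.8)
The plan is to exploit that, by Theorems~\ref{th:corresp-flat-func} and~\ref{th:corresp-flat-maps}, the map $\mbij$ is already a bijection between the ambient spaces $\FlatMap(\nbh,\RRR^2,\orig)$ and $\FlatMap_{\ZZZ}(\anbh,\Hsp,\dHsp)$, uniquely characterised by the intertwining relation $\difM\circ\Pmap=\Pmap\circ\hdifM$ for $\hdifM=\mbij(\difM)$. Membership of either ambient space already forces the map to be $\infty$-close to the identity at $\orig$, resp.\ along $\dHsp$, hence a local diffeomorphism there and fixed on the singular set; so the only point to verify is that, under $\difM\circ\Pmap=\Pmap\circ\hdifM$, the map $\difM$ preserves every orbit of $\BFld$ if and only if $\hdifM$ preserves every orbit of $\AFld$. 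Throughout I would use that $\Pmap\circ\AFlow_t=\BFlow_t\circ\Pmap$ (see~\eqref{equ:BFlow-lift}), so $\Pmap$ sends each $\AFld$-orbit into a single $\BFld$-orbit, and that $\Pmap:\IHsp\to\RRR^2\setminus\{\orig\}$ is the $\ZZZ$-covering with deck group acting by $n\cdot(\phi,\rrho)=(\phi+2\pi n,\rrho)$.

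The inclusion $\mbij^{-1}(\EndZAUDHi{\infty})\subseteq\EndBVi{\infty}$ is the easy direction, obtained by pushing orbits down: if $\hdifM$ preserves $\AFld$-orbits and $b=\Pmap(a)\in\nbh\setminus\{\orig\}$ lies on the $\BFld$-orbit $\omega$, then the $\AFld$-orbit $\tilde\omega$ of $a$ satisfies $\Pmap(\tilde\omega)\subseteq\omega$ and $\hdifM(a)\in\tilde\omega$, whence $\difM(b)=\Pmap(\hdifM(a))\in\omega$; and $\difM(\orig)=\orig$ because $\orig$ is the only singular point of $\BFld$ (Lemma~\ref{lm:AST-for-polynom}). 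For the opposite inclusion $\mbij(\EndBVi{\infty})\subseteq\EndZAUDHi{\infty}$ one has to lift orbits. Fix $\difM\in\EndBVi{\infty}$, put $\hdifM=\mbij(\difM)$, take $a\in\IHsp\cap\anbh$, and let $b=\Pmap(a)$, $\omega$ the $\BFld$-orbit of $b$, and $\tilde\omega$ the $\AFld$-orbit of $a$. From $\difM(\omega\cap\nbh)\subseteq\omega$ and the intertwining relation one gets $\hdifM(a)\in\Pmap^{-1}(\omega)$. Since $\BFld$ is nowhere zero on $\omega$, the lift $\AFld$ is nowhere zero on the $\AFld$-invariant set $\Pmap^{-1}(\omega)$ and tangent to it, so each connected component of $\Pmap^{-1}(\omega)$ is a single $\AFld$-orbit; and since the deck group $\ZZZ$ permutes these components transitively (each covers $\omega$, hence meets every fibre), the components are exactly the translates $\{n\cdot\tilde\omega\}_{n\in\ZZZ}$. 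Hence $\hdifM(a)\in n_0\cdot\tilde\omega$ for some $n_0\in\ZZZ$, and everything comes down to showing $n_0=0$.

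To prove $n_0=0$ I would split into two cases according to Lemma~\ref{lm:AST-for-polynom}. If $\BFunc$ has no real linear factor, then by Lemma~\ref{lm:pres-homog-pol} $\BFunc\circ\Pmap(\phi,\rrho)=\rrho^{\dg+1}\gamma(\phi)$ with $\gamma$ nowhere zero, hence of constant sign; then every $\BFld$-orbit near $\orig$ is a level curve $\omega=\{\BFunc=c\}$ whose preimage $\Pmap^{-1}(\omega)=\{(\phi,(c/\gamma(\phi))^{1/(\dg+1)}):\phi\in\RRR\}$ is connected, so $\Pmap^{-1}(\omega)=\tilde\omega$ and $n_0=0$ automatically. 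If $\BFunc$ does have a real linear factor, then $Z=\{\phi:\BFunc(\cos\phi,\sin\phi)=0\}$ is nonempty and, by homogeneity, invariant under $\phi\mapsto\phi+\pi$; so $Z$ is a bi-infinite sequence $\dots<\psi_k<\psi_{k+1}<\dots$ with consecutive gaps $\le\pi$, the rays $\{\phi=\psi_k\}$ are $\AFld$-invariant because $\AFld_1=(\dg+1)(\BFunc\circ\Pmap)/\rrho^2$ vanishes there (see~\eqref{equ:F1}), and every $\AFld$-orbit in $\IHsp$, unable to cross these rays, lies in one open strip $\{\psi_k<\phi<\psi_{k+1}\}$ (or in one of the rays). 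Therefore $n_0\cdot\tilde\omega$ lies in the strip shifted by $2\pi n_0$, which for $n_0\ne0$ is at $\phi$-distance $\ge2\pi-\pi=\pi$ from the strip of $\tilde\omega$; on the other hand $\hdifM_1(\phi,\rrho)-\phi$ is flat on $\dHsp$ and, by Lemma~\ref{lm:hhZ__abZ}, $\ZZZ$-invariant, hence uniformly small on some neighborhood $\{\rrho<\delta\}$ of $\dHsp$, so $|\hdifM_1(a)-\phi(a)|<\pi$ there, forcing $n_0=0$. Since in this case all $\BFld$-orbits through $\nbh\setminus\{\orig\}$ are non-closed, the integer $n_0=n_0(a)$ is well defined and depends continuously — hence locally constantly — on $a\in\IHsp\cap\anbh$; being $0$ on the nonempty open set $\{0<\rrho<\delta\}$ inside the connected set $\IHsp\cap\anbh$ (we may take $\nbh$ a round disk), it vanishes identically. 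Together with $\hdifM|_{\dHsp}=\id$ this shows $\hdifM$ preserves every $\AFld$-orbit, so $\hdifM\in\EndZAUDHi{\infty}$.

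The hardest part will be exactly this step $n_0=0$: a priori the covering $\Pmap^{-1}(\omega)$ has infinitely many sheets, and pinning $\hdifM(a)$ to the correct sheet $\tilde\omega$ requires combining the $\infty$-flatness of $\hdifM-\id$ on $\dHsp$ — used, via Lemma~\ref{lm:hhZ__abZ}, to get \emph{uniform} smallness near $\dHsp$, not merely pointwise — with the geometry of $\AFld$ in polar coordinates, namely the confinement of orbits between the $\AFld$-invariant rays $\{\phi=\psi_k\}$ whose gaps are $\le\pi<2\pi$. Everything else is routine bookkeeping with the square $\difM\circ\Pmap=\Pmap\circ\hdifM$ and the correspondences already set up in Theorems~\ref{th:corresp-flat-func} and~\ref{th:corresp-flat-maps}.
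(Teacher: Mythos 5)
Your proof is correct and follows essentially the same route as the paper: both inclusions are handled by pushing orbits down, respectively lifting them, with the same case split (no linear factors: $\Pmap^{-1}(\omega)$ is connected; linear factors present: orbits are confined between the $\AFld$-invariant rays over the zeros of $\BFunc(\cos\phi,\sin\phi)$). The only cosmetic difference is how the deck shift $n_0$ is ruled out --- you use the uniform bound $|\hdifM_1-\phi|<\pi$ near $\dHsp$ (from flatness plus $\ZZZ$-invariance) against the $\geq\pi$ separation of the translated strips, whereas the paper first pins down $\hdifM(\hat T_j)\subset\hat T_j$ from $\difM(T_i)=T_i$ and then invokes the strip-to-sector bijection; both arguments rest on the same geometry.
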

\begin{proof}
Let 
$$\difM\in\EndBVi{\infty}\qquad\text{³}\qquad
\hdifM=\mbij(\difM)\in\FlatMap_{\ZZZ}(\anbh,\Hsp,\dHsp).$$
We have to show that $\hdifM\in\EndZAUDHi{\infty}$, i.e. 
\begin{enumerate}
\item[(i)]
$\hdifM$ is a diffeomorphism in a neighborhood of every singular point point $z\in\Sigma_{\AFld} = \dHsp$ of $\AFld$;
\item[(ii)]
$\hdifM(\hat\omega\cap\anbh)\subset\hat\omega$ for every orbit $\hat\omega$ of $\AFld$.
\end{enumerate}

{\bf  Proof of (i).}
Since $\difM$ is $\infty$-close to $\id_{\RRR^2}$ at $\orig$, it follows from Theorem~\ref{th:corresp-flat-maps} that $\hdifM$ is $\infty$-close to the identity on $\Sigma_{\AFld} = \dHsp$.
Therefore for every point $z\in\dHsp$ the corresponding tangent map $T_{z}\hdifM:T_{z}\Hsp\to T_{z}\Hsp$ is identity and thus it is nondegenerate.

{\bf  Proof of (ii).}
Let $\hat\omega$ be an orbit of $\AFld$ and $\omega=\Pmap(\hat\omega)$ be the corresponding orbit of $\BFld$.
Then by definition $\difM(\omega\cap\nbh)\subset\omega$.
Hence $\hdifM(\hat\omega\cap\anbh)$ is included in some orbit $\hat\omega_1$ of $\AFld$ which is also mapped onto $\omega$ by $\Pmap$, i.e.\! $\Pmap(\hat\omega_1)=\omega$.

We have to show that $\hat\omega=\hat\omega_1$.
Actually this follow from the structure of orbits of $\BFld$.

Indeed, suppose that $\BFunc$ is a product of definite quadratic forms, i.e. $\BFunc(z)\not=0$ for $z\not=0$.
Then the structure of the orbits of $\AFld$ and $\BFld$ for this case is shown in Figure~\ref{fig:def}.
It follows from this figure that $\hat\omega=\Pmap^{-1}(\omega)$, whence $\hat\omega=\hat\omega_1$.

Suppose that $\BFunc$ has linear factors.
Then, see~Figure~\ref{fig:non-def}, the set $\BFunc^{-1}(0)$ is a union of $2l$ rays $T_0,\ldots,T_{2l-1}$ for $i=1,\ldots,l$ starting at the origin $\orig$ and such that $T_i$ and $T_{i+l\!\! \mod 2l}$ belong to the same straight line.
Moreover, the set $\Pmap^{-1}\circ\BFunc^{-1}(\orig)$ is a union of $\dHsp$ together with countable set of vertical half-lines $\hat T_j$, $(j\in\ZZZ)$.
We can assume that $\Pmap(\hat T_{j}) = T_{j \mod 2l}$.

Since $\difM(T_i)=T_i$, it follows that $\hdifM(\hat T_j)$ for all $i$ and $j$.
Therefore $\Pmap$ yields a bijection between the orbits of $\BFld$ laying in the angles between $T_{i}$ and $T_{i+1}$ and orbits of $\AFld$ laying between $\hat T_{i+2ls}$ and $\hat T_{i+1+2ls}$, $(s\in\ZZZ)$.
Hence $\hat\omega=\hat\omega_1$.

Thus $\mbij\bigl( \EndBVi{\infty} \bigr) \subset \EndZAUDHi{\infty}$.

Conversely, let $\hdifM\in\EndZAUDHi{\infty}$ and $\difM=\mbij^{-1}(\hdifM)\in\FlatMap(\nbh,\RRR^2,\orig)$.
We have to show that $\difM \in \EndBVi{\infty}$.
Since $\difM$ is $\infty$-close to $\id_{\RRR^2}$ at $\orig$, we obtain that $\difM$ is a local diffeomorphism at every (actually unique) singular point of $\BFld$.
Moreover, let $\omega$ be any orbit of $\BFld$ and $\hat\omega$ be an orbit of $\AFld$ such that $\omega=\Pmap(\hat\omega)$.
Then by definition $\hdifM(\hat\omega\cap\anbh) \subset \hat\omega$.

Since $\Pmap\circ\hdifM = \difM \circ \Pmap$, we obtain that 
$$
\difM(\omega\cap\nbh) \ \ \subset \ \ \difM \circ \Pmap(\hat\omega \cap \anbh) \ \ = \ \ \Pmap\circ\hdifM (\hat\omega \cap \anbh) \ \ \subset \ \ \Pmap(\hat\omega) \ \ = \ \
\omega. 
$$

Thus $\EndZAUDHi{\infty} \subset \mbij\bigl( \EndBVi{\infty} \bigr)$.
\end{proof}

It remains to prove the following statement:

\begin{prop}\label{prop:lift-shift-map}
Suppose that $\BFunc$ has property \AST.
Then there exists a unique mapping
$$
\iShift: \EndZAUDHi{\infty} \to \Flat_{\ZZZ}(\anbh,\dHsp)
$$
such that 
$$
\hdifM(x) = \AFlow(x,\iShift(\hdifM)(x))
$$
for all $\hdifM\in\EndZAUDHi{\infty}$.
This map is $C^{r+\dg,r}_{W,W}$-continuous.
\end{prop}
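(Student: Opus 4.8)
The plan is to construct $\iShift(\hdifM)$ first on the interior $\IHsp\cap\anbh$, where $\AFld$ is a genuine nonsingular vector field, and then --- this is where property \AST\ enters, via Corollary~\ref{cor:formulas-for-F} --- to prove that the function so obtained extends smoothly across $\dHsp$ and is flat there. I begin by recording that $\Sigma_{\AFld}=\dHsp$, so $\AFld$ is nonsingular on $\IHsp$, and that along every orbit of $\AFld$ lying in $\IHsp$ one of the coordinates $\phi,\rrho$ is strictly monotone: on a non-radial orbit $\AFld_1=(\dg+1)(\BFunc\circ\Pmap)/\rrho^2\neq0$, so $\phi$ is monotone, and on a radial orbit (the zero set of $\BFunc$) $\AFld_2$ equals $\rrho^{\dg}$ times a nonzero function of $\phi$ by Corollary~\ref{cor:formulas-for-F}, so $\rrho$ is monotone. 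In particular no orbit of $\AFld$ in $\IHsp$ is periodic, so for $z\in\IHsp\cap\anbh$ there is a \emph{unique} $t\in\RRR$ with $\AFlow(z,t)=\hdifM(z)$; this defines $\iShift(\hdifM)$ on $\IHsp\cap\anbh$, and the flow-box argument recalled in the Remark following Theorem~\ref{th:shift-func-for-inf-close} shows it is $C^{\infty}$ there. Since $\AFld$ and $\hdifM$ are $\ZZZ$-equivariant, so is $\AFlow$, and the uniqueness of $t$ forces $\iShift(\hdifM)$ to be $\ZZZ$-invariant; uniqueness of the whole map $\iShift$ likewise follows from non-periodicity, together with the fact that any two candidates are continuous on all of $\anbh$.

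The main step is to show that this $\iShift(\hdifM)$ extends to a function in $\Flat_{\ZZZ}(\anbh,\dHsp)$. I argue near a point $(\phi_0,0)\in\dHsp$. By Corollary~\ref{cor:formulas-for-F}, applied in coordinates rotated so that the direction $\phi_0$ becomes the $\phi$-axis, on a neighbourhood $W$ of $(\phi_0,0)$ one coordinate function $\AFld_j$ has the form $\AFld_j(\phi,\rrho)=\rrho^{m}\gamma(\phi)$ with $\gamma(\phi_0)\neq0$, where $(j,m)=(1,\dg-1)$ if $\BFunc(\cos\phi_0,\sin\phi_0)\neq0$ and $(j,m)=(2,\dg)$ otherwise; hence the coordinate $x_j\in\{\phi,\rrho\}$ is strictly monotone along orbits of $\AFld$ in $W\cap\IHsp$. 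For $z\in W\cap\IHsp$ close enough to $\dHsp$, the point $\hdifM(z)$ is so close to $z$ --- by flatness of $\hdifM-\id$ on $\dHsp$ --- that the orbit arc of $\AFld$ joining $z$ to $\hdifM(z)$ stays in $W$. Parametrising this arc by $x_j$ and using that $\BFunc\circ\Pmap=\rrho^{\dg+1}\cdot(\text{function of }\phi)$ is a first integral of $\AFld$, one expresses the transverse coordinate along the arc as a smooth function of $x_j$ and of $z$, and then $\AFld_j$ along the arc becomes $\rrho^{m}$ times a smooth nowhere-zero factor; integrating $dt=dx_j/\AFld_j$ (when $j=2$, after the substitution $r=\rrho(1+sw)$ on the arc) gives
$$
\iShift(\hdifM)(z)=\rrho^{-(\dg-1)}\cdot w(z)\cdot I(z),
$$
where $I(z)=\int_{0}^{1}(\cdots)\,ds$ is smooth near $(\phi_0,0)$, and $w=\hdifM_1-\phi$ if $j=1$ while $w=(\hdifM_2-\rrho)/\rrho$ if $j=2$ (one power of $\rrho$ of $\AFld_2=\rrho^{\dg}\gamma(\phi)$ being absorbed into $w$). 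In either case $w$ is smooth and flat on $\dHsp$, and since a smooth function flat on $\dHsp$ is divisible by $\rrho^{\dg-1}$ with smooth flat quotient (Hadamard, cf.\ Lemma~\ref{lm:Z-cont}), the function $\rrho^{-(\dg-1)}w$ --- hence $\iShift(\hdifM)$ --- agrees near $(\phi_0,0)$ with a smooth function flat on $\dHsp$.

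These local extensions agree with $\iShift(\hdifM)$ on $W\cap\IHsp$, hence with one another on overlaps; covering a fundamental domain $[0,2\pi]\times\{0\}$ of $\dHsp$ for the $\ZZZ$-action by finitely many such $W$, and invoking $\ZZZ$-invariance to spread the conclusion over all of $\dHsp$, I obtain that $\iShift(\hdifM)$ extends to an element of $\Flat_{\ZZZ}(\anbh,\dHsp)$; the identity $\hdifM(z)=\AFlow(z,\iShift(\hdifM)(z))$ then holds on all of $\anbh$ by construction and continuity.

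For the continuity estimate I read it off the displayed formula. The passage $\hdifM\mapsto w$ is $C^{r,r}_{W,W}$-continuous when $j=1$ and $C^{r+1,r}_{W,W}$-continuous when $j=2$ (division by $\rrho$, Lemma~\ref{lm:Z-cont}), the passage to $I$ involves only composition and multiplication and so is tamely continuous, and the final division by $\rrho^{\dg-1}$ costs $\dg-1$ further derivatives by iterating Lemma~\ref{lm:Z-cont}; composing these and patching over the finite cover of $[0,2\pi]\times\{0\}$ exactly as in the proof of Lemma~\ref{lm:estim-for-Dg} yields that $\iShift$ is $C^{r+\dg,r}_{W,W}$-continuous for every $r\ge0$. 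The heart of the argument --- and the expected main obstacle --- is the flatness in the displayed formula: it is property \AST, through Corollary~\ref{cor:formulas-for-F}, that keeps the vanishing order $m$ of the relevant coordinate of $\AFld$ bounded by $\dg$, so that the division by $\rrho^{\dg-1}$ remains legitimate and a \emph{smooth} flat shift function actually exists; for a vector field with a multiple factor this control, and with it the whole statement, would break down.
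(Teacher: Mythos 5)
Your proposal is correct and follows essentially the same route as the paper: both pass to the interior where the shift function is unique and smooth, use Corollary~\ref{cor:formulas-for-F} to single out the coordinate $\AFld_j=\rrho^{m}\gamma(\phi)$ with $\gamma(\phi_0)\neq0$ near each boundary point, express the shift as the integral of $dx_j/\AFld_j$ along the orbit arc, and obtain flatness by Hadamard division of the flat factor $\hdifM_j-x_j$ by the appropriate power of $\rrho$, with the continuity estimate read off from Lemmas~\ref{lm:D-cont} and~\ref{lm:Z-cont}. The only (cosmetic) difference is that you exhibit $\secfunc$ itself as $\rrho^{-(\dg-1)}\cdot(\text{flat})\cdot(\text{smooth})$ via the substitution in the integral, whereas the paper differentiates the integral once and shows the first-order partials are flat.
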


\begin{cor}
Define the mapping $\ShiftI:\EndBVi{\infty} \to \Flat(\nbh,\orig)$ by
$\ShiftI = \fbij^{-1} \circ \iShift \circ \mbij$, i.e.\! so that the following diagram becomes commutative: 
$$
\begin{CD}
\FlatMap_{\ZZZ}(\anbh,\Hsp,\dHsp)\; & \;\supset\; & \;\EndZAUDHi{\infty} @>{\iShift}>> \Flat_{\ZZZ}(\anbh,\dHsp) \\
@A{\mbij}AA @A{\mbij}AA    @AA{\fbij}A \\
\FlatMap(\nbh,\RRR^2,\orig)\; & \;\supset\; & \;\EndBVi{\infty} @>{\ShiftI}>> \Flat(\nbh,\orig) 
\end{CD}
$$
Then $\ShiftI$ satisfies the statement of Proposition~\ref{pr:inf-id-shift-func}.
\end{cor}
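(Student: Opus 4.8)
Write $\hdifM=(\hdifM_1,\hdifM_2)$ in the coordinates $(\phi,\rrho)$, and recall from Section~\ref{sect:corresp-flat-maps} that $\hdifM$ is fixed on $\dHsp$ and that $\hdifM_1-\phi$ and $\hdifM_2-\rrho$ are flat on $\dHsp$. The plan is to first produce the shift function on the interior $\IHsp$, where $\AFld$ is regular, and then to prove that it extends smoothly and flatly across $\dHsp$ using the explicit form of $\AFld$ from Corollary~\ref{cor:formulas-for-F}.

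\textbf{Uniqueness and the interior.} Since $\BFunc$ has property \AST, its Hamiltonian field, and hence $\BFld=\eta\HFld$, vanishes only at $\orig$ (Lemma~\ref{lm:AST-for-polynom}); as $\AFld$ lifts $\BFld$ through the covering $\Pmap\colon\IHsp\to\RRR^2\setminus\{\orig\}$, the field $\AFld$ is smooth and nowhere zero on $\anbh\cap\IHsp$. Shrinking $\nbh$ we may also assume that every orbit of $\AFld$ meeting $\anbh$ is non-periodic (a periodic orbit of $\BFld$ lifts to a $\ZZZ$-cover of a circle), so that $t\mapsto\AFlow_t(a)$ is injective for each $a\in\anbh\cap\IHsp$. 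For $\hdifM\in\EndZAUDHi{\infty}$ and $a\in\IHsp$, Lemma~\ref{lm:reduct-G-to-F} puts $\hdifM(a)$ on the orbit of $a$, so there is a unique number $\iShift(\hdifM)(a)$ with $\hdifM(a)=\AFlow(a,\iShift(\hdifM)(a))$; by the straightening argument of the Remark after Theorem~\ref{th:shift-func-for-inf-close} (which uses only regularity of $\AFld$ at $a$ and that $\hdifM$ preserves orbits, not that $\hdifM$ be a local diffeomorphism there) it is smooth on $\IHsp$, and $\ZZZ$-invariant by $\ZZZ$-equivariance of $\hdifM$ and $\AFlow$ together with uniqueness. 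Since $\IHsp$ is dense and functions in $\Flat_{\ZZZ}(\anbh,\dHsp)$ are continuous, this already yields uniqueness of $\iShift(\hdifM)$; it remains to prove that this function extends smoothly across $\dHsp$, flatly there.

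\textbf{Smoothness and flatness across $\dHsp$ (the main step).} Fix $(\phi_0,0)\in\dHsp$; by $\ZZZ$-invariance we may take $\phi_0$ in a fixed fundamental interval, on which $\BFunc$ has finitely many simple root directions. By Corollary~\ref{cor:formulas-for-F}, applied in the coordinate $\phi-\phi_0$, one coordinate function of $\AFld$ has near $(\phi_0,0)$ the form $\rrho^{m}\gamma(\phi)$ with $\gamma(\phi_0)\neq0$, where $m=\dg-1$ if $\phi_0$ is not a root direction (and then it is the $\phi$-component) and $m=\dg$ if it is (and then it is the $\rrho$-component, $\BFunc$ having no multiple factors). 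Say it is $\AFld_1=\rrho^{m}\gamma(\phi)$, the other case being symmetric with $\phi$ and $\rrho$ interchanged. Because $\dHsp$ is invariant, the $\rrho$-coordinate of $\AFlow_t(\phi,\rrho)$ equals $\rrho\,\widetilde\Psi(\phi,\rrho,t)$ with $\widetilde\Psi$ smooth and $\widetilde\Psi(\cdot,\cdot,0)\equiv1$; integrating $\tfrac{d}{dt}$ of the $\phi$-coordinate of $\AFlow_t$ then shows this coordinate equals $\phi+\rrho^{m}W(\phi,\rrho,t)$ with $W$ smooth, $W(\phi,0,0)=0$ and $\partial_tW(\phi,0,0)=\gamma(\phi)\neq0$ near $\phi_0$. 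As the orbits near $(\phi_0,0)$ are graphs over $\phi$, the shift function is already determined by the first coordinate of $\hdifM(\phi,\rrho)=\AFlow(\phi,\rrho,\tau)$, i.e. by $W(\phi,\rrho,\iShift(\hdifM))=(\hdifM_1-\phi)/\rrho^{m}$, whose right-hand side is smooth and flat on $\dHsp$ (a flat function divided by $\rrho^{m}$). Since $\partial_tW\neq0$ near $(\phi_0,0,0)$, the implicit function theorem writes $\iShift(\hdifM)$ near $(\phi_0,0)$ as a smooth function of $\phi$, $\rrho$ and of the flat datum $(\hdifM_1-\phi)/\rrho^{m}$, hence it vanishes with all derivatives on $\dHsp$; on $\IHsp$ it agrees with the function of the previous step by uniqueness. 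Gluing, $\iShift(\hdifM)\in\Flat_{\ZZZ}(\anbh,\dHsp)$ and satisfies the required identity. The crux, and the expected main obstacle, lies exactly here: the flow slows down along $\dHsp$ precisely like $\rrho^{m}$, and one must verify that this degeneracy is exactly cancelled by the flatness of $\hdifM-\id$, so that the quotient $(\hdifM_1-\phi)/\rrho^{m}$ is a legitimate smooth, flat input and the solution remains flat to all orders.

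\textbf{Continuity, and the Corollary.} On compact subsets of $\IHsp$ the implicit-function estimates lose no derivatives. Near $\dHsp$ one has $\iShift(\hdifM)=t(\phi,\rrho,(\hdifM_1-\phi)/\rrho^{m})$ with $t$ a fixed smooth function; by the chain rule $\|\iShift(\hdifM)\|^{r}$ on a compact set is bounded by a polynomial in fixed norms of $t$ and in $\|(\hdifM_1-\phi)/\rrho^{m}\|^{r}$, and dividing by $\rrho^{m}$, $m\le\dg$, costs $m$ derivatives by Lemma~\ref{lm:Z-cont} while Lemma~\ref{lm:D-cont} handles the rest, giving $\|\iShift(\hdifM)\|^{r}_{K}\le C\|\hdifM\|^{r+\dg}_{L}$ on a suitable compact $L$. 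A finite cover of a fundamental domain of $\dHsp$ together with one interior piece yields $C^{r+\dg,r}_{W,W}$-continuity. (When $\dg=1$ the field $\AFld$ is regular at the relevant boundary points away from genuine hyperbolic zeros, and the same scheme applies with $m\le1$.) Finally, since $\mbij$ carries $\EndBVi{\infty}$ onto $\EndZAUDHi{\infty}$ (Lemma~\ref{lm:reduct-G-to-F}), the map $\ShiftI=\fbij^{-1}\circ\iShift\circ\mbij$ is defined; the identity $\hdifM(z)=\BFlow(z,\ShiftI(\hdifM)(z))$ follows by pushing the corresponding identity on $\Hsp$ through $\Pmap$ via~\eqref{equ:BFlow-lift}, and composing the continuity bounds of Theorems~\ref{th:corresp-flat-func} and~\ref{th:corresp-flat-maps} with the above gives the $C^{3r+\dg,r}_{W,W}$-continuity of Proposition~\ref{pr:inf-id-shift-func}.
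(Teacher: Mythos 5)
Your closing paragraph --- pushing the identity $\hdifM(a)=\AFlow(a,\iShift(\hdifM)(a))$ down through $\Pmap$ via~\eqref{equ:BFlow-lift} and composing the continuity bounds of $\mbij$ ($C^{r,r}_{W,W}$), $\iShift$ ($C^{r+\dg,r}_{W,W}$) and $\fbij^{-1}$ ($C^{3r,r}_{W,W}$ for $k=1$) to land on $C^{3r+\dg,r}_{W,W}$ --- is precisely the paper's proof of this Corollary, which is deliberately formal: it treats Proposition~\ref{prop:lift-shift-map} as a black box. Everything preceding that paragraph in your write-up is in effect a proof of Proposition~\ref{prop:lift-shift-map} itself, and there you take a genuinely different route from Section~\ref{sect:proof_of_shift2}. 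The paper writes the shift time as an explicit integral of $d\theta/(\rrho^{\dg-1}\gamma(\theta))$ between $\phi$ and $\hdifM_1$ (resp.\ the analogous $\rrho$-integral when $\phi_0$ is a root direction) and differentiates it by hand, using the Hadamard lemma to absorb $\rrho^{\dg-1}$ or $\rrho^{\dg}$ into the flat numerators. You instead expand the flow map itself, writing its $\phi$-coordinate as $\phi+\rrho^{m}W(\phi,\rrho,t)$ with $\partial_t W\not=0$ along $\dHsp$, and solve $W(\phi,\rrho,\tau)=(\hdifM_1-\phi)/\rrho^{m}$ by the implicit function theorem; both arguments turn on the same cancellation of the $\rrho^{m}$-degeneracy of the flow against the flatness of $\hdifM-\id$, but yours packages the computation through the flow map rather than an explicit time integral, which makes the flatness of the output more transparent. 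The one step you should make explicit is why the small-$\tau$ branch produced by the implicit function theorem coincides with the true shift time: local uniqueness in the IFT does not by itself rule out a large-time solution of the same equation. This is easily supplied --- near $\dHsp$ the relevant coordinate moves along the orbit with speed bounded below by $c\,\rrho^{m}$ while the displacement $\hdifM_1-\phi$ (resp.\ $\hdifM_2-\rrho$) is $O(\rrho^{N})$ for every $N$, so the true time tends to $0$ as $\rrho\to0$ and must lie on the IFT branch. With that remark added, your argument is a correct alternative to the paper's treatment, and the Corollary itself is handled exactly as in the paper.
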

\begin{proof}[Proof of Corollary.]
Indeed, let $\difM\in\EndBVi{\infty}$,
$$
\hdifM=\mbij(\difM)\in\EndZAUDHi{\infty},
\qquad
\hat\afunc=\iShift(\hdifM)\in\Flat_{\ZZZ}(\anbh,\dHsp).
$$
So
$$
\hdifM(a)=\AFlow(a,\hat\afunc(a)), \qquad \forall a\in\anbh.
$$
Set
$$
\afunc\;=\;\fbij^{-1}(\hat\afunc)\;=\;\fbij^{-1}\circ \secfunc \circ \mbij(\difM)\;\in\; \Flat(\nbh,\orig),
$$
thus $\hat\afunc=\afunc\circ\Pmap$.
First we have to show that 
$$
\difM(b) = \BFlow(b,\afunc(b)), \qquad \forall b\in\nbh.
$$
Let $a\in\anbh$ and $b\in\nbh$ be such that $b=\Pmap(a)$.
Then
\begin{multline*}
\difM(b) = 
\difM\circ\Pmap(a) = 
\Pmap\circ\hdifM(a) = 
\Pmap\circ\AFlow(a,\hat\afunc(a))= \\ =
\BFlow(\Pmap(a),\hat\afunc(a))=
\BFlow(\Pmap(a),\afunc\circ\Pmap(a))=
\BFlow(b,\afunc(b)).
\end{multline*}

It remains to prove continuity of $\ShiftI$.

Notice that for every $r\geq \dg$ the mapping $\mbij$ is $C^{r,r}_{W,W}$-continuous, $\iShift$ is $C^{r,r-\dg}_{W,W}$-continuous, and 
$\fbij^{-1}$ is $C^{r-\dg, [(r-\dg)/3]}_{W,W}$-continuous, where $[t]$ is the integer part of $t\in\RRR$.
Hence $\ShiftI$ is $C^{r, [(r-\dg)/3]}_{W,W}$-continuous of all $r\geq\dg$.

Replacing $r$ by $3r+\dg$ we obtain that $\ShiftI$ is $C^{3r+\dg, r}_{W,W}$-continuous.
\end{proof}

Thus Proposition~\ref{pr:inf-id-shift-func} and therefore Theorem~\ref{th:global-shift-func-1} are proved modulo Proposition~\ref{prop:lift-shift-map}.

\begin{rem}\label{rem:division-by-r}\rm
Let $A\in\Flat(\anbh,\dHsp)$, i.e.\! $A$ is flat on $\dHsp$.
Then it follows from the Hadamard lemma that for every $t\in\NNN$ there exists $A_t\in\Flat(\anbh,\dHsp)$ such that $A=\rrho^{t} A_t$.
\end{rem}

\subsection*{Proof of Proposition~\ref{prop:lift-shift-map}.}
Let $\hdifM=(\hdifM_1,\hdifM_2)\in\EndAUDHi{\infty}$.
Since all orbits of $\AFld$ in $\IHsp$ are non-closed, it follows that for every $z\in\IHsp$ there exists a unique number $\secfunc(z)\in\RRR$ such that 
$$
\hdifM(z) = \BFlow(z,\secfunc(z)).
$$
Thus we get a shift-function $\secfunc:\IHsp\to\RRR$ for $\hdifM$.
Moreover, it follows from~\eqref{equ:reg-shifts} that this function is smooth on $\IHsp$.

Define $\secfunc$ on $\dHsp$ by $\secfunc(z)=0$ for $z\in\dHsp$.
We have show that this extension is smooth of $\Hsp$ and flat on $\dHsp$.

Let $\phi_0\in\dHsp$.
Then by Lemma~\ref{lm:pres-homog-pol} 
$$\BFunc\circ\Pmap(\phi,\rrho) =  \rrho^{\dg+1} (\phi-\phi_0)^a \gamma(\phi),$$
for some $a\geq0$ depending on $\phi_0$ and a smooth function $\gamma:\RRR\to\RRR$ such that $\gamma(\phi_0)\not=0$.

Moreover, since $\BFunc$ has property \AST, it follows from Corollary~\ref{cor:formulas-for-F} that $a$ is either $0$ or $1$.
 
Consider two cases.
Not loosing generality, we can also assume that $\phi_0=0$.

1) Suppose that $a=0$, i.e.
$$\BFunc\circ\Pmap(\phi,\rrho) =  \rrho^{\dg+1} \gamma(\phi),$$
is a neighborhood of $(0,0)\in\Hsp$.
Equivalently, this means that $\BFunc$ is not divided by $y$.
Then by~\eqref{equ:F1} of Lemma~\ref{lm:formulas-for-F} we have that 
$$ 
\AFld_1(\phi,\rrho) = \rrho^{\dg-1}\, \gamma_1(\phi).
$$ 
Since $\hdifM_1-\phi$ and $\hdifM_2-\rrho$ are flat on $\dHsp$, they are divided by $\rrho$, whence we can write 
$$
\hdifM_1(\phi,\rrho)=\phi+\Amap(\phi,\rrho),\qquad
\hdifM_2(\phi,\rrho)=\rrho+\rrho\Bmap(\phi,\rrho),
$$
where $\Amap,\Bmap\in\Flat(\anbh,\dHsp)$.

Notice that $\AFld$ defines a the following system of ODE:
$$
\left\{
\begin{array}{l}
\dot\phi = \AFld_1(\phi,\rrho) \\
\dot\rrho = \AFld_2(\phi,\rrho).
\end{array}
\right.
$$
Whence $dt = \frac{d\phi}{\AFld}$.
Therefore the time $\secfunc(\phi,\rrho)$ between the points $(\phi,\rrho)$ and $\hdifM(\phi,\rrho)$ can be calculated by the following formula:
$$
\secfunc(\phi,\rrho) = 
\int\limits_{\phi}^{\hdifM_1(\phi,\rrho)} \frac{d\theta}{\rrho^{\dg-1}\, \gamma(\theta)}.
$$
We will show that $\secfunc$ is smooth in a neighborhood of $(0,0)\in\Hsp$.
It suffices to prove that $\secfunc$ has smooth partial derivatives of the first order which are flat on $\dHsp$.

An easy calculation shows that 
$$
\secfunc'_{\phi}(\phi,\rrho) = 
\frac{(\hdifM_1)'_{\phi}}{\hdifM_2^{\dg-1}\cdot \gamma(\hdifM_1)} -
\frac{1}{\rrho^{\dg-1} \cdot \gamma},
\quad\quad
\secfunc'_{\rrho}(\phi,\rrho) = 
\frac{(\hdifM_1)'_{\rrho}}{\hdifM_2^{\dg-1} \gamma(\hdifM_1)}.
$$
Notice that 
$$
(\hdifM_1)'_{\phi} = 1 +  \Amap'_{\phi},
\qquad
(\hdifM_1)'_{\rrho} = \Amap'_{\rrho}.
$$
Moreover, 
\begin{equation}\label{equ:h_2_p_case1}
\hdifM_2^{\dg-1} = \rrho^{\dg-1} (1 + \bar\Bmap),
\qquad
\gamma(\hdifM_1(\phi,\rrho))=\gamma(\phi) (1 + C),
\end{equation}
for some $\bar\Bmap,C\in\Flat(\anbh,\dHsp)$.
Hence
\begin{multline}\label{equ:d_secfun_d_phi:1}
\secfunc'_{\phi}(\phi,\rrho) = 
\frac{1 +\Amap'_{\phi}}{\rrho^{\dg-1} (1 + \bar\Bmap)\gamma(\hdifM_1)} -
\frac{1+C}{\rrho^{\dg-1}\gamma(\hdifM_1)} = \\ =
\frac{\overbrace{\Amap'_{\phi}- \bar\Bmap-C -  \bar\Bmap C}^{D}}{\rrho^{\dg-1} (1 + \bar\Bmap)\gamma(\hdifM_1)} =
\frac{D/\rrho^{\dg-1}}{(1 + \bar\Bmap)\gamma(\hdifM_1)}.
\end{multline}
Since $D\in\Flat(\anbh,\dHsp)$, it follows from the Hadamard lemma, see  Remark~\ref{rem:division-by-r}, that $D/\rrho^{\dg-1}$ and therefore $\secfunc'_{\phi}(\phi,\rrho)$ belong to $\Flat(\anbh,\dHsp)$.

Similarly,
\begin{equation}\label{equ:d_secfun_d_rho:1}
\secfunc'_{\rrho}(\phi,\rrho) = 
\frac{\Amap'_{\rrho}}{\rrho^{\dg-1} (1+\bar\Bmap) \gamma(\hdifM_1)} =
\frac{\Amap'_{\rrho}/\rrho^{\dg-1}}{ (1+\bar\Bmap) \gamma(\hdifM_1)}.
\end{equation}
Again this function is smooth since $\Amap'_{\rrho}\in\Flat(\anbh,\dHsp)$.

2) Suppose that $a=1$.
Then $\BFunc=y\Rmap$, where $\Rmap(x,0)\not=0$ and by~\eqref{equ:F2} of Lemma~\ref{lm:formulas-for-F}
$$
\AFld_2(\phi,\rrho)  = \rrho^{\dg}\, \gamma_2(\phi).
$$
Since $\AFld_1(0,\rrho)=0$, we see that the half-axis $\{\phi=0,\rrho>0\}$ is the orbit of $\AFld$.
Therefore $\hdifM$ preserves this half-axis, i.e. $\hdifM_1(0,\rrho)=0$, whence by the Hadamard lemma we obtain that 
$$
\hdifM_1(\phi,\rrho)=\phi + \phi \Amap(\phi,\rrho),
\qquad
\hdifM_2(\phi,\rrho)=\rrho + \rrho\Bmap(\phi,\rrho)
$$
for certain $\Amap,\Bmap\in\Flat(\anbh,\dHsp)$.
Therefore
$$
\secfunc(\phi,\rrho) = 
\int\limits_{\rrho}^{\hdifM_2(\phi,\rrho)} \frac{d \rho}{\rho^{\dg}\, \gamma(\phi)}.
$$
Then similarly to the previous case it can be shown that 
\begin{equation}\label{equ:d_secfun_d_phi:2}
\secfunc'_{\phi}(\phi,\rrho) = 
\frac{\Bmap'_{\phi}/\rrho^{\dg}}{(1+\bar\Bmap)\gamma(\hdifM_1)},
\end{equation}
and
\begin{equation}\label{equ:d_secfun_d_rho:2}
\secfunc'_{\rrho}(\phi,\rrho) = 
\frac{E/\rrho^{\dg}}{(1+\bar\Bmap)\gamma(\hdifM_1)},
\end{equation}
where similarly to~\eqref{equ:h_2_p_case1}
$\bar\Bmap, C, E$ are defined by
$$
\hdifM_2^{\dg} = \rrho^{\dg} (1 + \bar\Bmap),
\qquad
\gamma(\hdifM_1(\phi,\rrho))=\gamma(\phi) (1 + C),
$$
$$
E = \Bmap'_{\rrho}-\bar\Bmap- C -\bar\Bmap C
$$
and belong to $\Flat(\anbh,\dHsp)$.
Hence $\secfunc\in\Flat(\anbh,\dHsp)$ as well.

It remains to prove continuity of the correspondence $\hdifM\mapsto\secfunc$.
Notice that the expressions for $\secfunc'_{\phi}$ and $\secfunc'_{\rrho}$ include division by  $\rrho^{\dg}$ and the operators $\partial/\partial\phi$, and $\partial/\partial\rrho$.
Recall that by Lemmas~\ref{lm:D-cont} and~\ref{lm:Z-cont} the division by $\rrho$ and differentiating by $\phi$ and $\rrho$ are $C^{r+1,r}_{W,W}$-continuous.

It follows from formulas~\eqref{equ:d_secfun_d_phi:1}, \eqref{equ:d_secfun_d_rho:1}, \eqref{equ:d_secfun_d_phi:2}, and \eqref{equ:d_secfun_d_rho:2} that there exists $d>0$ and a closed ball $K\subset\nbh$ containing $\orig\in\RRR^2$ such that the absolute values of denominators of these expressions are greater than $2d$ at every point of $K$.
Put $$L = \Pmap^{-1}(K) \; \cap \; [0,2\pi]\times[0,\infty).$$
Then it follows from expressions for $\secfunc'_{\phi}$ and $\secfunc'_{\rrho}$ and Lemmas~\ref{lm:D-cont} and~\ref{lm:Z-cont} that for every $r\geq0$ and $\varepsilon>0$ there exists $\delta\in(0,d)$ such that the inequality
$$
\|\hdifM-\qdifM\|^{r+\dg+1}_{K}<\delta \qquad\text{implies}\qquad
\|\secfunc(\hdifM)-\secfunc(\qdifM)\|^{r+1}_{L}<\varepsilon.
$$
Hence the correspondence $\hdifM\mapsto\secfunc$ is $C^{r+\dg,r}_{W,W}$-continuous for all $r\geq0$.
We leave the details to the reader.

\section{Acknowledgments}
I am sincerely greateful to V.\;V.\;Sharko, E.\;Polu\-lyakh, A.\;Prishlyak, I.\;Vlasenko, and I.\;Yurchuk for useful discussions and interest to this work.

\bigskip
\bigskip
\noindent
{\sc Sergiy Maksymenko} \\
Topology dept., \\ Institute of Mathematics of NAS of Ukraine \\ 
Tereshchenkivs'ka st. 3, Kyiv, 01601 Ukraine \\
email: \texttt{maks@imath.kiev.ua}

\end{document}